\documentclass[a4paper, 11pt]{article}

\vspace{-1cm}

\title{\Large\bf
Identities of inverse Chevalley type for \\ graded characters of level-zero Demazure submodules over \\ quantum affine algebras of type $C$%
\footnote{Key words and phrases: level-zero Demazure module, semi-infinite flag manifold, inverse Chevalley formula, quantum alcove model.
\newline
Mathematics Subject Classification 2020: Primary 05E10; Secondary 14N15, 14M15, 20G42, 81R10.}
} 
\author{%
Takafumi Kouno \\ 
 \small Department of Mathematics, Faculty of Science and Engineering, Waseda University, \\ 
 \small 3-4-1 Okubo, Shinjuku-ku, Tokyo 169-8555, Japan \\ 
 \small (e-mail: {\tt t.kouno@kurenai.waseda.jp}) \\[5mm]
Satoshi Naito \\
 \small Department of Mathematics, Tokyo Institute of Technology, \\ 
 \small 2-12-1 Oh-Okayama, Meguro-ku, Tokyo 152-8551, Japan \\ 
 \small (e-mail: {\tt naito@math.titech.ac.jp}) \\[5mm]
Daniel Orr \\ 
 \small Department of Mathematics (MC 0123), 460 McBryde Hall, Virginia Tech, \\ 
 \small 225 Stanger St., Blacksburg, VA 24061, U.S.A. \\ 
 \small (e-mail: {\tt dorr@vt.edu})
}
\date{}

\usepackage[truedimen,margin=23truemm]{geometry}
\usepackage{hyperref}
\usepackage{amsmath, amssymb}
\usepackage{amsthm}
\usepackage{bm}
\usepackage{enumerate}

\theoremstyle{plain}
\newtheorem{thm}{Theorem}[section]
\newtheorem{lem}[thm]{Lemma}
\newtheorem{prop}[thm]{Proposition}
\newtheorem{cor}[thm]{Corollary}
\newtheorem{conj}[thm]{Conjecture}

\newtheorem{ithm}{Theorem}

\theoremstyle{definition}
\newtheorem{defn}[thm]{Definition}
\newtheorem{exm}[thm]{Example}

\theoremstyle{remark}
\newtheorem{rem}[thm]{Remark}

\numberwithin{equation}{section}

\newcommand{\BZ}{\mathbb{Z}}
\newcommand{\BR}{\mathbb{R}}
\newcommand{\BC}{\mathbb{C}}

\newcommand{\CA}{\mathcal{A}}
\newcommand{\CO}{\mathcal{O}}
\newcommand{\CP}{\mathcal{P}}
\newcommand{\CS}{\mathcal{S}}

\newcommand{\bp}{\mathbf{p}}

\newcommand{\bi}{\bm{i}}

\newcommand{\bQG}{\mathbf{Q}_{G}}
\newcommand{\rat}{\mathrm{rat}}
\newcommand{\bQGr}{\bQG^{\rat}}

\newcommand{\q}{\mathsf{q}}

\newcommand{\Fg}{\mathfrak{g}}
\newcommand{\Fh}{\mathfrak{h}}

\newcommand{\ve}{\varepsilon}
\newcommand{\vpi}{\varpi}

\newcommand{\bchi}{\bm{\chi}}

\newcommand{\af}{\mathrm{af}}

\newcommand{\Par}{\mathrm{Par}}
\newcommand{\bPar}{\overline{\Par}}

\newcommand{\pair}[2]{\langle #1, #2 \rangle}
\newcommand{\bigpair}[2]{\left\langle #1, #2 \right\rangle}
\newcommand{\bra}[1]{[\![ #1 ]\!]}
\newcommand{\pra}[1]{(\!( #1 )\!)}

\newcommand{\vtl}{\vartriangleleft}

\DeclareMathOperator{\ed}{end}
\DeclareMathOperator{\down}{down}
\DeclareMathOperator{\wt}{wt}
\DeclareMathOperator{\height}{height}
\DeclareMathOperator{\sgn}{sgn}
\DeclareMathOperator{\gch}{gch}
\DeclareMathOperator{\QBG}{QBG}
\DeclareMathOperator{\Funorg}{Fun}

\newcommand{\Fun}{\Funorg_{P}(\BC\pra{q^{-1}}[P])}
\newcommand{\Funneg}{\Funorg_{P}^{\mathrm{neg}}(\BC\pra{q^{-1}}[P])}
\newcommand{\Funess}{\Funorg_{P}^{\mathrm{ess}}(\BC\pra{q^{-1}}[P])}

\makeatletter
\renewcommand\section{\@startsection{section}{1}{0pt}
{-3.5ex plus -1ex minus -.2ex}{1.0ex plus .2ex}{\large\bf}}
\renewcommand\subsection{\@startsection{subsection}{1}{0pt}
{2.5ex plus 1ex minus .2ex}{-1em}{\bf}}
\makeatother

\newenvironment{enu}%
{\begin{enumerate}[\upshape {(}1{)}]}%
{\end{enumerate}}

\begin{document}
\maketitle

\begin{abstract}
We provide identities of inverse Chevalley type for the graded characters of level-zero Demazure submodules of extremal weight modules over a quantum affine algebra of type $C$.
These identities express the product $e^{\mu} \gch V_{x}^{-}(\lambda)$ of the (one-dimensional) character $e^{\mu}$, where $\mu$ is a (not necessarily dominant) minuscule weight, with the graded character $\gch V_{x}^{-}(\lambda)$ of the level-zero Demazure submodule $V_{x}^{-}(\lambda)$ over the quantum affine algebra $U_{\q}(\Fg_{\af})$ as an explicit finite linear combination of the graded characters of level-zero Demazure submodules.
These identities immediately imply the corresponding inverse Chevalley formulas in the torus-equivariant $K$-group of the semi-infinite flag manifold $\bQG$ associated to a connected, simply-connected and simple algebraic group $G$ of type $C$. Also, we derive cancellation-free identities from the identities above of inverse Chevalley type in the case that $\mu$ is a standard basis element $\ve_{k}$ in the weight lattice $P$ of $G$.
\end{abstract}

\section{Introduction}
The purpose of this paper is to prove identities (of inverse Chevalley type) for the graded characters of Demazure submodules (level-zero Demazure submodules) of extremal weight modules with level-zero extremal weight over a quantum affine algebra of type $C$. 

Let $U_{\q}(\Fg_{\af})$ be the quantum affine algebra associated to the (untwisted) affine Lie algebra $\Fg_{\af}$ whose underlying simple finite-dimensional Lie algebra is $\Fg$.
Let us denote by $W_{\af}$ (resp., $W$) the Weyl group, by $\Fh_{\af}$ (resp., $\Fh$) the Cartan subalgebra, and by $P_{\af}$ (resp., $P$) the weight lattice of $\Fg_{\af}$ (resp., $\Fg$),
where $P = \sum_{i \in I} \BZ \varpi_{i}$ and $P_{\af} = P + \BZ \delta + \BZ \Lambda_{0}$.
For $x \in W_{\af}$ and $\lambda \in P^{+}$, with $P^{+} \subset P$ the set of dominant weights for $\Fg$, let $V_{x}^{-}(\lambda)$ denote the Demazure submodule (level-zero Demazure submodule) of the extremal weight module $V(\lambda)$ with extremal weight $\lambda$ over $U_{\q}(\Fg_{\af})$,
where $\lambda \in P$ is regarded as an element of $P_{\af}$ in a canonical way.
In recent years, the graded characters $\gch V_{x}^{-}(\lambda)$ of the level-zero Demazure submodule $V_{x}^{-}(\lambda)$ for $x \in W_{\af}$, $\lambda \in P$ has been studied in several works. Among them, Kato-Naito-Sagaki \cite{KNS} obtained an explicit description of the expansion of the graded character $\gch V_{x}^{-}(\lambda + \mu)$ for $\lambda, \mu \in P^{+}$ as an infinite linear combination with coefficients in $\BZ[q^{-1}][P]$ of graded characters $\gch V_{y}^{-}(\nu)$ for $y \in W_{\af}$ and $\nu \in P$. Also, Naito-Orr-Sagaki \cite{NOS} obtained a similar description of the graded character $\gch V_{x}^{-}(\lambda - \mu)$ for $\lambda, \mu \in P^{+}$ such that $\lambda - \mu \in P^{+}$; note that in this case, the expansion is, in fact, a finite linear combination with coefficients in $\BZ[q,q^{-1}][P]$.
Recently, Kouno-Lenart-Naito \cite{KLN} (cf.~\cite{LNS}) obtained an explicit description of the expansion, as an infinite linear combination with coefficients in $\BZ[q, q^{-1}][P]$, of the graded character $\gch V_{x}^{-}(\lambda + \mu)$ for $\lambda \in P^{+}$ and an arbitrary $\mu \in P$ such that $\lambda + \mu \in P^{+}$.
This identity is of the following form: 
\begin{equation}\label{eq:Chevalley_intro}
\gch V_{x}^{-}(\lambda + \mu) = \sum_{y \in W_{\af}, \ \nu \in P} c_{x, \mu}^{y, \nu} e^{\nu} \gch V_{y}^{-}(\lambda), 
\end{equation}
where $c_{x, \mu}^{y, \nu} \in \BZ[q, q^{-1}]$ for $y \in W_{\af}$ and $\nu \in P$, and $e^{\nu}$ for $\nu \in P$ denotes the character of $H$ with weight $\nu$. 
Here we should mention that the coefficients $c_{x, \mu}^{y, \nu}$ are independent of the weight $\lambda \in P$; also, for each $y \in W_{\af}$, the sum $\sum_{\nu \in P} c_{x, \mu}^{y, \nu} e^{\nu}$ is an element of $\BZ[q, q^{-1}][P]$.
This explicit identity is called \emph{an identity of Chevalley type}. 

Our main interest lies in an explicit description of the expansion of the product $e^{\nu} \gch V_{x}^{-}(\lambda)$ as a finite linear combination of the graded characters $\gch V_{y}^{-}(\lambda+ \mu)$ for $y \in W_{\af}$ and $\mu \in P$; that is, an explicit description of the coefficients $d_{x, \nu}^{y, \mu}$ in the identity of the following form: 
\begin{equation}\label{eq:inv_Chevalley_intro}
e^{\nu} \gch V_{x}^{-}(\lambda) = \sum_{y \in W_{\af}, \ \mu \in P} d_{x, \nu}^{y, \mu} \gch V_{y}^{-}(\lambda + \mu), 
\end{equation}
where the coefficients $d_{x, \nu}^{y, \mu} \in \BZ[q, q^{-1}]$ are independent of the weight $\lambda \in P$.
In types $A$, $D$, $E_{6}$, $E_{7}$, Kouno-Naito-Orr-Sagaki \cite{KNOS} (for minuscule weights $\nu$) and Lenart-Naito-Orr-Sagaki \cite{LNOS} (for arbitrary weights $\nu$) gave an explicit description of the coefficients $d_{x, \nu}^{y, \mu}$ in the identity above; strictly speaking, the identities obtained in these works are ones in the equivariant $K$-group of the semi-infinite flag manifold $\mathbf{Q}_{G}$ associated to the connected, simply-connected and simple algebraic group $G$ over $\BC$ whose Lie algebra is $\Fg$.
In particular, these identities imply the following \emph{finiteness} result: (i) the right-hand side of the identity \eqref{eq:inv_Chevalley_intro} is a finite sum, and (ii) $d_{x, \nu}^{y, \mu} \in \BZ[q, q^{-1}]$ for all $y \in W_{\af}$ and $\mu \in P$. 
Note that this finiteness result was obtained in simply-laced types by Orr \cite{O}, but the argument therein does not seem to work in non-simply-laced types. 
Since the identity \eqref{eq:inv_Chevalley_intro} can be thought of as an ``inverse expansion'' of the identity \eqref{eq:Chevalley_intro}, we call it \emph{an identity of inverse Chevalley type}. 

In this paper, we study identities of inverse Chevalley type in type $C_{n}$. We give an explicit description of the coefficients $d_{x, \nu}^{y, \mu}$ in the case that $\nu = v \vpi_{1}$ for $v \in W$, where $\vpi_{1}$ is the first fundamental weight. Note that the $W$-orbit of $\vpi_{1}$ is $\{ \pm \ve_{k} \mid k \in \{1, \ldots, n\} \}$, where $\{ \ve_{1}, \ldots, \ve_{n} \}$ is the standard basis of the weight lattice $P \cong \BZ^{n}$; 
for any $v, w \in W$, there exists $m = 1, \ldots, n$ such that $v\vpi_{1} = w\ve_{m}$ or $v\vpi_{1} = -w\ve_{m}$. 

Now we are ready to give the main results of this paper; for the notation used in the following theorems, see Section~\ref{sec:identity_statement}. 
First, we state the ``first half'' of the desired identities of inverse Chevalley type. 
\begin{ithm}[= Corollary~\ref{cor:IC_first-half_complete}] \label{thm:IC_first-half_intro}
For $x = wt_{\xi} \in W_{\af}$ with $w \in W$ and $\xi \in Q^{\vee}$, $m = 1, \ldots, n$, and $\lambda \in P^{+}$ such that $\lambda + \ve_{k} \in P^{+}$ for all $k = 1, \ldots, m$, there holds the following identity: 
\begin{equation*}
\begin{split}
& e^{w\ve_{m}} \gch V_{x}^{-}(\lambda) \\ 
&= q^{\pair{\ve_{m}}{\xi}} \sum_{B \in \CA(w, \Gamma_{m}(m))} (-1)^{|B|} \gch V_{\ed(B)t_{\down(B)+\xi}}^{-}(\lambda + \ve_{m}) \\ 
& \quad + \sum_{j = 1}^{m-1} \sum_{(j_{1}, \ldots, j_{r}) \in \CS_{m, j}} \sum_{A_{1} \in \CA_{w}^{m, j_{1}}} \cdots \sum_{A_{r} \in \CA_{\ed(A_{r-1})}^{j_{r-1}, j_{r}}} (-1)^{|A_{1}|+\cdots+|A_{r}|-r} q^{\pair{\ve_{j}}{\down(A_{1})+\cdots+\down(A_{r})+\xi}} \\ 
& \quad \times \sum_{B \in \CA(\ed(A_{r}), \Gamma_{j}(j))} (-1)^{|B|} \gch V_{\ed(B)t_{\down(B)+\down(A_{1})+\cdots+\down(A_{r})+\xi}}^{-}(\lambda + \ve_{j}). 
\end{split}
\end{equation*}
\end{ithm}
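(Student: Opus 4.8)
The plan is to prove the identity by induction on $m$, treating it as the explicit solution of a one-step recursion in the quantum alcove model for type $C_n$. Throughout, I work directly with graded characters: each $\gch V_x^-(\lambda)$ is recorded by the affine Weyl group element $x = wt_\xi$ together with the dominant weight $\lambda$, and the combinatorial data $\CA(w, \Gamma_m(m))$ parametrizes the directed paths in the quantum Bruhat graph obtained by running the alcove path $\Gamma_m(m)$ for $\ve_m$ starting from $w$, with $\ed(B)$ the terminal element and $\down(B)$ the accumulated quantum (down) degree producing the coweight shift $t_{\down(B)+\xi}$. The hypothesis that $\lambda + \ve_k \in P^+$ for all $k \le m$ guarantees that every target module $V_y^-(\lambda + \ve_j)$ appearing on the right-hand side is well defined.

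First I would dispose of the base case $m = 1$. Since $\ve_1 = \vpi_1$ is minuscule, multiplication by $e^{w\vpi_1}$ has an especially clean expansion: there are no intermediate floors to descend through, so the correction sum over $j < m$ is empty and only the principal term survives, namely $q^{\pair{\ve_1}{\xi}} \sum_{B \in \CA(w, \Gamma_1(1))} (-1)^{|B|} \gch V_{\ed(B)t_{\down(B)+\xi}}^-(\lambda + \ve_1)$. This is precisely the minuscule inverse Chevalley formula in the single-step case, which I would establish from the quantum alcove model description of the alcove path $\Gamma_1(1)$.

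For the inductive step I would first prove a one-step recursion expressing $e^{w\ve_m}\gch V_x^-(\lambda)$ as the principal $\ve_m$-term (the first line of the identity) plus a finite sum of products of the form $e^{(\cdots)}\gch V_{\ed(A)t_{\down(A)+\xi}}^-(\lambda)$ in which the weight $\ve_m$ has been replaced by some $\ve_{j_1}$ with $j_1 < m$; here $A$ ranges over $\CA_w^{m, j_1}$, and its $\down$-data and signs are bookkept explicitly. Applying the inductive hypothesis to each such lower product and composing the quantum Bruhat graph paths -- so that the sequences $(j_1, \ldots, j_r) \in \CS_{m,j}$ enumerate all chains of descents from level $m$ to level $j$, and the $\down$-contributions accumulate as $\down(A_1) + \cdots + \down(A_r) + \xi$ inside the exponents $q^{\pair{\ve_j}{\cdots}}$ -- assembles exactly the nested double sum in the second line. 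The point is that $\CS_{m,j}$ is by design the index set of all descent chains $m > j_1 > \cdots > j_r = j$, so summing the iterated recursion reproduces the stated closed form term by term.

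The main obstacle is the non-simply-laced geometry of type $C_n$. Because the quantum Bruhat graph carries edges labeled by both long and short roots, the down-degree, and hence the exponents $\pair{\ve_j}{\down(\cdots)+\xi}$, depend on the root lengths through the coroots, and the signs $(-1)^{|A_1|+\cdots+|A_r|-r}$ must interact correctly across the nested concatenations $\CA_w^{m,j_1} \cdots \CA_{\ed(A_{r-1})}^{j_{r-1}, j_r}$. The abstract finiteness argument available in simply-laced types (Orr) does not apply here, so the vanishing of all spurious contributions -- terms that would spoil finiteness or land on a wrong target weight -- must be verified by hand. I expect this to reduce to a sign-reversing involution on pairs of admissible subsets, or equivalently to a structural identity in the quantum alcove model ensuring that, after all cancellations, the only surviving paths are those catalogued by $\CS_{m,j}$ together with the principal $\Gamma_j(j)$-paths $B$.
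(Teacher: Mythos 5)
Your overall architecture coincides with the paper's: the identity is proved for $x=w\in W$ by induction on $m$, with the general $x=wt_{\xi}$ case then following from the translation property $\gch V_{xt_{\xi}}^{-}(\lambda)=q^{-\pair{\lambda}{\xi}}\gch V_{x}^{-}(\lambda)$, and the inductive step consists of a one-step recursion that trades $e^{w\ve_m}$ for terms $e^{\ed(A)\ve_{j_1}}\gch V^-_{\ed(A)t_{\down(A)}}(\lambda)$ with $A\in\CA_w^{m,j_1}$ and $j_1<m$, followed by composition of descent chains indexed by $\CS_{m,j}$. That much is right, and the bookkeeping of signs and $\down$-data in the composition step is routine once the recursion is in hand.

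The genuine gap is that the one-step recursion itself --- the paper's Proposition~\ref{prop:key_first-half}, which states
$\sum_{B\in\CA(w,\Gamma_k(k))}(-1)^{|B|}\gch V^-_{\ed(B)t_{\down(B)}}(\lambda+\ve_k)
=\sum_{A\in\CA(w,\Theta_k)}(-1)^{|A|}e^{w\ve_k}\gch V^-_{\ed(A)t_{\down(A)}}(\lambda)$
and is simultaneously your base case ($\CA(w,\Theta_1)=\{\emptyset\}$) --- is asserted rather than derived, and your sketch does not identify the mechanism that proves it. It is not a self-contained ``structural identity in the quantum alcove model'': it is obtained by \emph{inverting the forward Chevalley formula}. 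Concretely, one expands each $\gch V^-_{\ed(B)t_{\down(B)}}(\lambda+\ve_k)$ on the left via Theorem~\ref{thm:Chevalley} applied to the specific reduced $\ve_k$-chain $\Gamma_{k-1,k}=\Gamma_k^{\ast}(k)\ast\Theta_k$ of Lemma~\ref{lem:ek-chain}; the infinite sum over $\bPar(\ve_k)$ must first be resummed into the factor $(1-q^{-\pair{\lambda+\vpi_k}{\alpha_k^{\vee}}})^{-1}$ (Proposition~\ref{prop:Chevalley_ek}), and each admissible subset is split at the concatenation point into $A^{(1)}\in\CA(\cdot,\Gamma_k^{\ast}(k))$ and $A^{(2)}\in\CA(\cdot,\Theta_k)$ so that $\height$, $\wt$, and $n(\cdot)$ become computable. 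Only then does a sign-reversing involution enter, and it acts on pairs $(B,A^{(1)})$ with $B\in\CA(w,\Gamma_k(k))$ and $A^{(1)}\in\CA(\ed(B),\Gamma_k^{\ast}(k))$ --- not on the objects in the final formula --- leaving exactly two fixed-point classes whose contributions $1-q^{-\pair{\lambda+\ve_k}{\alpha_k^{\vee}}}$ cancel the geometric-series denominator. Your description of the cancellation as producing ``only the paths catalogued by $\CS_{m,j}$'' conflates this involution with the later, separate cancellation inside the $\CS_{m,j}$ sums (the cancellation-free form, Theorem~\ref{thm:IC_cancellation-free_first-half}), which is not needed for the present statement. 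Without the Chevalley-formula input and the concatenated chain $\Gamma_k^{\ast}(k)\ast\Theta_k$, neither the base case nor the recursion gets off the ground.
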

Note that $x$ and $w$ in Theorem~\ref{thm:IC_first-half_intro} agree in the sense that $x = wt_{\xi}$, but $m$ is arbitrary. 
Next, we state the ``second half'' of the desired identities of inverse Chevalley type. 
\begin{ithm}[= Corollary~\ref{cor:IC_second-half_complete}] \label{thm:IC_second-half_intro}
For $x = wt_{\xi} \in W_{\af}$ with $w \in W$ and $\xi \in Q^{\vee}$, $m = 1, \ldots, n$, and $\lambda \in P^{+}$ such that $\lambda + \ve_{k} \in P^{+}$ for all $k = 1, \ldots, n$ and $\lambda - \ve_{k} \in P^{+}$ for $k = m+1, \ldots, n$, there holds the following identity: 
\begin{equation*}
\begin{split}
& e^{-w\ve_{m}} \gch V_{x}^{-}(\lambda) \\ 
&= q^{-\pair{\ve_{m}}{\xi}} \sum_{B \in \CA(w, \Theta_{m})} (-1)^{|B|} \gch V_{\ed(B)t_{\down(B)+\xi}}^{-}(\lambda - \ve_{m}) \\ 
& \quad + \sum_{j = m+1}^{n} \sum_{(j_{1}, \ldots, j_{r}) \in \CS_{\overline{m}, \overline{j}}} \sum_{A_{1} \in \CA_{w}^{\overline{m}, j_{1}}} \cdots \sum_{A_{r} \in \CA_{\ed(A_{r-1})}^{j_{r-1}, j_{r}}} (-1)^{|A_{1}|+\cdots+|A_{r}|-r} q^{-\pair{\ve_{j}}{\down(A_{1})+\cdots+\down(A_{r})+\xi}} \\ 
& \quad \times \sum_{B \in \CA(\ed(A_{r}), \Theta_{j})} (-1)^{|B|} \gch V_{\ed(B)t_{\down(B)+\down(A_{1})+\cdots+\down(A_{r})+\xi}}^{-}(\lambda - \ve_{j}) \\ 
& \quad + \sum_{j = 1}^{n} \sum_{(j_{1}, \ldots, j_{r}) \in \CS_{\overline{m}, j}} \sum_{A_{1} \in \CA_{w}^{\overline{m}, j_{1}}} \cdots \sum_{A_{r} \in \CA_{\ed(A_{r-1})}^{j_{r-1}, j_{r}}} (-1)^{|A_{1}|+\cdots+|A_{r}|-r} q^{\pair{\ve_{j}}{\down(A_{1})+\cdots+\down(A_{r})+\xi}} \\ 
& \quad \times \sum_{B \in \CA(\ed(A_{r}), \Gamma_{j}(j))} (-1)^{|B|} \gch V_{\ed(B)t_{\down(B)+\down(A_{1})+\cdots+\down(A_{r})+\xi}}^{-}(\lambda + \ve_{j}). 
\end{split}
\end{equation*}
\end{ithm}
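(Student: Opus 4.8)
The plan is to carry out the computation in the function space $\Fun$, in which multiplication by $e^{\nu}$ is a well-defined operator and the graded characters $\{\gch V_{y}^{-}(\lambda+\mu)\}_{y \in W_{\af},\, \mu \in P}$ form a distinguished spanning family; this is the functional framework in which both the forward identity \eqref{eq:Chevalley_intro} and its inverse are naturally expressed. The heart of the matter is a single \emph{fundamental} inverse Chevalley step: an expansion of $e^{-\ve_{k}}\gch V_{z}^{-}(\lambda)$ (multiplication by one negative standard basis vector) obtained by reading off the quantum alcove model along the reflection chain $\Theta_{k}$ that realizes $-\ve_{k}$ inside the weight poset $\ve_{1} > \cdots > \ve_{n} > -\ve_{n} > \cdots > -\ve_{1}$ of the vector representation. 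First I would prove this one-step formula, which produces a \emph{leading} part---the sum $\sum_{B \in \CA(z, \Theta_{k})}(-1)^{|B|}\gch V_{\ed(B)t_{\down(B)}}^{-}(\lambda-\ve_{k})$, valid when $\lambda - \ve_{k}$ is dominant---together with a \emph{correction} part coming from those admissible subsets $B$ for which the target weight is non-dominant and must be re-expanded.

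The key structural observation is that each such correction strictly raises the relevant weight along the finite poset above: a non-dominant contribution with would-be weight $\lambda - \ve_{k}$ is rewritten, by means of a known character relation, as a combination of terms $\gch V_{z'}^{-}(\lambda - \ve_{j})$ and $\gch V_{z'}^{-}(\lambda + \ve_{j})$ with $-\ve_{j}$ (resp.\ $+\ve_{j}$) lying strictly above $-\ve_{k}$ in the poset. Thus the ``correction operator'' $N$ implicit in the one-step formula is locally nilpotent, and the inverse Chevalley identity is obtained by summing the resulting geometric series $\sum_{r \ge 0} N^{r}$ applied to the leading part. The summation index $r$ is exactly the length of the chain $(j_{1}, \ldots, j_{r})$, the sets $\CS_{\overline{m},\overline{j}}$ and $\CS_{\overline{m}, j}$ record the admissible sequences of indices visited (terminating respectively in the negative part $-\ve_{j}$ or in the positive part $+\ve_{j}$), and the transition sets $\CA_{\ed(A_{i-1})}^{j_{i-1}, j_{i}}$ parametrize the individual re-expansion steps. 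The signs $(-1)^{|A_{1}|+\cdots+|A_{r}|-r}$ and the monomials $q^{\pm\pair{\ve_{j}}{\down(A_{1})+\cdots+\down(A_{r})+\xi}}$ are the accumulated contributions of the quantum Bruhat graph ($\QBG$) edges and the coweight translations $t_{\down(A_{i})}$ picked up along the chain, with the overall prefactor $q^{-\pair{\ve_{m}}{\xi}}$ (resp.\ $q^{\pair{\ve_{j}}{\xi}}$) accounting for the shear by $t_{\xi}$ when $x = wt_{\xi}$.

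Since the operator $N$ acts uniformly, moving up the poset one reflection box at a time, both the first-half and the second-half identities are instances of a single master recursion: a chain $(j_{1}, \ldots, j_{r})$ records the intermediate boxes crossed, and the process is capped by a terminal \emph{leading} box. When the terminal target is a positive weight $+\ve_{j}$ this terminal box is the set $\CA(\ed(A_{r}), \Gamma_{j}(j))$, which is identical to the one furnishing the leading term of Theorem~\ref{thm:IC_first-half_intro}; when it is a negative weight $-\ve_{j}$ it is $\CA(\ed(A_{r}), \Theta_{j})$. This is why the inner sums of the second-half identity reuse verbatim the combinatorial data of the first half, and why Theorem~\ref{thm:IC_first-half_intro} may be taken as a known input: the positive-crossing contributions are exactly those chains in $\CS_{\overline{m},j}$ whose terminal box lands in the positive region. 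It then remains to verify that the geometric series terminates---guaranteed because the poset has finite length $2n$, so that $N^{r}=0$ for large $r$---and that collecting all contributions leaves no surviving non-dominant graded character, whence the right-hand side is a finite $\BZ[q,q^{-1}]$-linear combination; this finiteness is most cleanly tracked by working inside the essential subspace $\Funess$.

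The step I expect to be the main obstacle is the fundamental one-step formula across the \emph{long-root} box at index $n$: the reflection $s_{n}$ associated with the long root $2\ve_{n}$ sends $\ve_{n} \mapsto -\ve_{n}$, and the behavior of the quantum alcove model there differs from that at the short-root boxes $\ve_{i} - \ve_{j}$ used in the simply-laced cases. Pinning down the admissible subsets $B \in \CA(z, \Theta_{k})$ through this box, computing the correct coweight $\down(B)$, and matching the resulting powers of $q$ against the pairings $\pair{\pm\ve_{j}}{\,\cdot\,}$ is the delicate computation; it is precisely this long-root phenomenon that the simply-laced argument of \cite{O} does not address, and getting it right is what forces the type-$C$ coefficients $d_{x,\nu}^{y,\mu}$ into the form recorded above.
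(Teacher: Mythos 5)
Your overall architecture does match the paper's: a single one-step identity whose correction terms strictly ascend the chain $\ve_{1} > \cdots > \ve_{n} > -\ve_{n} > \cdots > -\ve_{1}$, iterated until it terminates, with the positive-target terminal terms fed by the already-established first-half identity (the paper organizes exactly this iteration as a downward induction on $m = n, n-1, \ldots, 1$, invoking Corollary~\ref{cor:IC_first-half_complete} for targets $+\ve_{j}$ and the induction hypothesis for targets $-\ve_{j}$ with $j>m$). The genuine gap is that the one-step formula itself --- the technical heart of the whole proof --- is asserted rather than proved, and the route you sketch for it would not work. In the paper this is Proposition~\ref{prop:key_second-half}: $\sum_{B \in \CA(w,\Theta_{k})}(-1)^{|B|}\gch V^{-}_{\ed(B)t_{\down(B)}}(\lambda-\ve_{k}) = \sum_{A \in \CA(w,\Gamma_{k}(k))}(-1)^{|A|}e^{-w\ve_{k}}\gch V^{-}_{\ed(A)t_{\down(A)}}(\lambda)$, from which isolating $A=\emptyset$ gives your leading part plus corrections indexed by \emph{nonempty} $A \in \CA(w,\Gamma_{k}(k))$. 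You claim this can be ``read off the quantum alcove model along the reflection chain $\Theta_{k}$ that realizes $-\ve_{k}$,'' but $\Theta_{k}=(-(1,k),\ldots,-(k-1,k))$ is not by itself a $(-\ve_{k})$-chain (the reduced one is $\Theta_{k}^{\ast}\ast\Gamma_{k}(k)$, Corollary~\ref{cor:-ek-chain}), and no direct reading yields the identity. The paper derives it by substitution from Proposition~\ref{prop:key_first-half}, whose proof applies the Chevalley formula for $\gch V^{-}(\lambda+\ve_{k})$ (Proposition~\ref{prop:Chevalley_ek}) along the concatenation $\Gamma_{k}^{\ast}(k)\ast\Theta_{k}$ and then runs a sign-reversing involution on pairs $(B,A^{(1)})$ with $B \in \CA(w,\Gamma_{k}(k))$, $A^{(1)} \in \CA(\ed(B),\Gamma_{k}^{\ast}(k))$; the two fixed points produce the factor $1-q^{-\pair{\lambda+\ve_{k}}{\alpha_{k}^{\vee}}}$ that cancels the denominator $1-q^{-\pair{\lambda+\vpi_{k}}{\alpha_{k}^{\vee}}}$ coming from the infinite sum over $\bPar(\ve_{k})$. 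None of this appears in your plan, and it is not routine.

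Two further misstatements. First, your corrections are said to arise from admissible subsets whose ``target weight is non-dominant and must be re-expanded''; in fact dominance of all relevant $\lambda\pm\ve_{k}$ is assumed throughout, and the corrections arise structurally from the nonempty $A$'s above, each contributing $e^{\ed(A)\ve_{j}}\gch V^{-}_{\ed(A)t_{\down(A)}}(\lambda)$ with $\ve_{j}=\ed(A)^{-1}w(-\ve_{k})$ strictly higher in the poset --- there is no ``known character relation'' rewriting a non-dominant graded character here. Second, the difficulty you single out (the long-root box inside $\Theta_{k}$) is misplaced: $\Theta_{k}$ contains only short roots $\ve_{i}-\ve_{k}$, while the long root $2\ve_{k}$ sits in $\Gamma_{k}(k)$ and $\Gamma_{k}^{\ast}(k)$; the genuinely delicate type-$C$ points are the involution/denominator cancellation just described and the special role of the simple root terminating $\Gamma_{k}^{\ast}(k)\ast\Theta_{k}$'s first block (the unique $\alpha$ admitting a path $v\rightarrow vs_{\alpha}\rightarrow v$ in $\QBG(W)$). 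As written, the proposal is a correct outline of the paper's strategy with its key lemma missing.
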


Note that the proofs of Theorems~\ref{thm:IC_first-half_intro} and \ref{thm:IC_second-half_intro} begin with auxiliary identities (Propositions~\ref{prop:key_first-half} and \ref{prop:key_second-half}) derived directly from a special case of the Chevalley formula given by Proposition~\ref{prop:Chevalley_ek}. 
Also, observe that from the description of these identities, the finiteness result (i), (ii) mentioned above immediately follows,
since every weight $\lambda \in P$ can be written as a $\BZ$-linear combination of $\ve_{1}, \ldots, \ve_{n}$.

Furthermore, we give cancellation-free identities of inverse Chevalley type in the ``first-half'' case, i.e., in the case $\nu = v \varpi_{1} = w \ve_{1}, \ldots, w \ve_{n}$. 
The precise statement is as follows; in the following theorem, $\bp_{m, j}(w)$ denotes a suitable directed path in the quantum Bruhat graph (for the definitions, see Section~\ref{sec:cancellation-free_statement}). 
\begin{ithm}[= Corollary~\ref{cor:IC_cancellation-free_first-half_complete}] 
For $x = wt_{\xi} \in W_{\af}$ with $w \in W$ and $\xi \in Q^{\vee}$, $m = 1, \ldots, n$, and $\lambda \in P^{+}$ such that $\lambda + \ve_{k} \in P^{+}$ for all $k = 1, \ldots, m$, there holds the following cancellation-free identity: 
\begin{equation*}
\begin{split}
& e^{w\ve_{m}} \gch V_{x}^{-}(\lambda) \\ 
&= q^{\pair{\ve_{m}}{\xi}} \sum_{B \in \CA(w, \Gamma_{m}(m))} (-1)^{|B|} \gch V_{\ed(B)t_{\down(B)+\xi}}^{-}(\lambda + \ve_{m}) \\ 
& \quad + \sum_{j = 1}^{m-1} q^{\pair{\ve_{j}}{\wt(\bp_{m, j}(w))+\xi}} \sum_{\CA(\ed(\bp_{m, j}(w)), \Gamma_{j}(j))} (-1)^{|B|} \gch V_{\ed(B)t_{\down(B) + \wt(\bp_{m, j}(w)) + \xi}}^{-}(\lambda + \ve_{j}). 
\end{split}
\end{equation*}
\end{ithm}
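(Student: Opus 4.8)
\emph{Strategy.} The plan is to deduce the stated cancellation-free identity directly from Theorem~\ref{thm:IC_first-half_intro}, whose right-hand side already contains the same leading term $q^{\pair{\ve_m}{\xi}} \sum_{B \in \CA(w, \Gamma_m(m))} (-1)^{|B|} \gch V_{\ed(B) t_{\down(B)+\xi}}^{-}(\lambda + \ve_m)$. Since this term appears verbatim in both identities, the entire task is to show that, for each fixed $j \in \{1, \ldots, m-1\}$, the iterated signed sum over $(j_1, \ldots, j_r) \in \CS_{m,j}$ and $A_1, \ldots, A_r$ occurring in Theorem~\ref{thm:IC_first-half_intro} collapses to the single path contribution indexed by $\bp_{m,j}(w)$. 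Because the inner sum $\sum_{B} (-1)^{|B|} \gch V_{\ed(B) t_{\down(B)+\cdots}}^{-}(\lambda + \ve_j)$ depends on the intermediate data only through the endpoint $\ed(A_r)$ and the accumulated down-weight $\down(A_1) + \cdots + \down(A_r)$, it suffices, in order to prove that the two $j$-summands coincide, to match the total coefficient of each graded character $\gch V_z^{-}(\lambda + \ve_j)$ for $z \in W_{\af}$; this reduces the corollary to a purely combinatorial cancellation statement.

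\emph{The combinatorial lemma.} First I would fix $j$ and group the sequences $(j_1, \ldots, j_r; A_1, \ldots, A_r)$ according to the pair $(u, \gamma) := (\ed(A_r),\, \down(A_1) + \cdots + \down(A_r))$, on which both the inner $B$-sum and the $q$-power $q^{\pair{\ve_j}{\gamma + \xi}}$ depend. The reduction above shows that it suffices to prove
\[
\sum_{\substack{(j_1, \ldots, j_r) \in \CS_{m,j} \\ A_1 \in \CA_w^{m, j_1}, \ldots, A_r \in \CA_{\ed(A_{r-1})}^{j_{r-1}, j_r} \\ \ed(A_r) = u,\ \sum_i \down(A_i) = \gamma}} (-1)^{|A_1| + \cdots + |A_r| - r} \;=\; \begin{cases} 1 & \text{if } (u, \gamma) = (\ed(\bp_{m,j}(w)),\, \wt(\bp_{m,j}(w))), \\ 0 & \text{otherwise.} \end{cases}
\]
The mechanism behind this vanishing should be the structure of the quantum Bruhat graph $\QBG(W)$: each factor $\sum_{A \in \CA_u^{i,k}} (-1)^{|A|-1}(\cdots)$ is a signed transition along edges of $\QBG(W)$ recorded by $\ed(A)$ and $\down(A)$, so the iterated composition is a signed count of concatenated directed paths emanating from $w$ and running through the levels $m = j_0 > j_1 > \cdots > j_r = j$. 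I expect the cancellation to be governed by uniqueness of the label-increasing directed path in $\QBG(W)$ with respect to a fixed reflection ordering (in the sense of Lenart--Postnikov), which should single out $\bp_{m,j}(w)$, while all other concatenated paths cancel in signed pairs.

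\emph{Realizing the survivor.} Concretely, I would construct a sign-reversing involution on the set of \emph{non-minimal} sequences---those not recording the edges of $\bp_{m,j}(w)$---that simultaneously preserves $u = \ed(A_r)$ and $\gamma = \sum_i \down(A_i)$ (so that both the inner $B$-sum and the $q$-power are unchanged) while flipping the parity of $|A_1| + \cdots + |A_r| - r$. Such an involution pairs up and cancels all nonsurviving terms, leaving exactly the fixed point associated with $\bp_{m,j}(w)$; reading off its endpoint and weight then gives $u = \ed(\bp_{m,j}(w))$ and $\gamma = \wt(\bp_{m,j}(w))$, which is precisely the single surviving term on the right-hand side of the corollary, with $q$-power $q^{\pair{\ve_j}{\wt(\bp_{m,j}(w)) + \xi}}$.

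\emph{Main obstacle.} The crux is the construction of this involution (equivalently, the telescoping), which must be compatible with \emph{all} of the constraints at once: it has to preserve the terminal vertex and the total down-weight, reverse the sign, and respect the interpolating conditions encoded in $\CS_{m,j}$ and in the superscripts of the sets $\CA^{\bullet,\bullet}$. The additional difficulty specific to type $C_n$ is that $\QBG(W)$ distinguishes long and short roots, so the local moves underlying the involution and the very definition of $\bp_{m,j}(w)$ must be analyzed case by case according to the root lengths involved; this is exactly the point at which the simply-laced arguments break down, and where the careful choice of $\bp_{m,j}(w)$ in Section~\ref{sec:cancellation-free_statement} is essential.
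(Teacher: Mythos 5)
Your strategy coincides with the paper's: reduce to the non-cancellation-free identity and show that, for each fixed $j$, the iterated signed sum over $\CS_{m,j}$ and the sets $\CA^{\bullet,\bullet}$ collapses to the single term indexed by $\bp_{m,j}(w)$ (the paper packages this as an identity in the group algebra $\BZ[q^{-1}][W]$, which is exactly your per-$(u,\gamma)$ statement). The reduction itself is sound. However, the entire mathematical content of the theorem is that combinatorial collapse, and you have only asserted it: you say you ``would construct a sign-reversing involution'' and then explicitly flag its construction as the main obstacle. That is a genuine gap, not a deferred detail. The paper's argument is an induction on $m-j$: the induction hypothesis first collapses the tail $(A_2,\ldots,A_r)$ of each sequence into the single path $\bp_{j_1,j}(\ed(A_1))$, and only then does one run an involution on the leading block, namely $A \mapsto A \bigtriangleup \{a_1\}$, where $a_1$ is the minimal $k$ such that $w \xrightarrow{(k,m)} ws_{(k,m)}$ is an edge of $\QBG(W)$, with the singleton $\{a_1\}$ as the unique surviving term. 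Verifying that paired terms actually cancel requires nontrivial inputs your outline does not identify: the equality of endpoints $\ed(\bp_{a_k,j}(\ed(A))) = \ed(\bp_{a_1,j}(\ed(\iota(A))))$ rests on Corollary~\ref{cor:IC_cancellation_minimum} (which pins down the first edge of the recursively defined path), and the equality of accumulated weights $\down(A)+\wt(\bp_{a_k,j}(\ed(A))) = \down(\iota(A))+\wt(\bp_{a_1,j}(\ed(\iota(A))))$ rests on the fact that $(1,m)\prec\cdots\prec(m-1,m)$ extends to a reflection order (so the relevant concatenations are shortest directed paths in $\QBG(W)$) together with Postnikov's lemma that all shortest paths between two fixed vertices of $\QBG(W)$ have the same weight. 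A single global involution preserving $(\ed(A_r),\sum_i\down(A_i))$ sequence-by-sequence, as you propose, is not what the paper constructs, and it is not clear such an involution exists without the inductive collapse.

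A smaller point: your diagnosis that the difficulty lies in the long/short root distinction of type $C$ is off target for the first-half case. The chains $\Theta_k$ consist only of the roots $\pm(\ve_i-\ve_k)$, so the cancellation here is governed by essentially type-$A$ edge criteria (Lemma~\ref{lem:QBG_criterion}\,(1) and Lemmas~\ref{lem:QBG_exchange}--\ref{lem:QBG_existence}); the roots $\ve_i+\ve_j$ and $2\ve_k$ enter only in the second-half case, which is precisely why the cancellation-free identity there is left as Conjecture~\ref{conj:IC_cancellation_second-half}.
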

As for the ``second-half'' case, i.e., the case $\nu = -v \varpi_{1} = -w \ve_{1}, \ldots, -w \ve_{n}$, 
we provide conjectural cancellation-free identities of inverse Chevalley type in Section~\ref{sec:cancellation-free_conjecture}. 

As an application of our identities of inverse Chevalley type, we can prove a formula for equivariant scalar multiplication (i.e., multiplication with the one-dimensional character $e^{\nu}$, $\nu \in P$, of $H$) in the ($H \times \BC^{\ast}$)-equivariant $K$-group $K_{H \times \BC^{\ast}}(\bQG)$ of the semi-infinite flag manifold $\bQG$ associated to $G$. 
To be more precise, let $\bQGr$ denote the semi-infinite flag manifold associated to $G$, that is, the reduced ind-scheme of infinite type whose set of $\BC$-valued points is $G(\BC\pra{z}) / (H(\BC) \cdot N(\BC \pra{z}))$, where $H \subset G$ is a maximal torus with Lie algebra $\Fh$ and $N$ is the unipotent radical of a Borel subgroup $B \supset H$.
For $\lambda \in P$, there exists a line bundle on $\bQGr$ associated to $\lambda$; we denote by $\CO(\lambda)$ the sheaf corresponding to this line bundle. 
Also, there exist semi-infinite Schubert varieties $\bQG(x)$ for $x \in W_{\af}$, which are subvarieties of $\bQGr$; note that $\bQG = \bQG(e)$, with $e \in W_{\af}$ the identity element.
The equivariant $K$-group $K_{H \times \BC^{\ast}}(\bQG)$ is defined to be the $\BZ[q, q^{-1}][P]$-submodule of (the Laurent series, in $q^{-1}$, extension of) the Iwahori-equivariant $K$-group $K^{\prime}_{\mathbf{I} \rtimes \BC^{\ast}}(\bQG)$, introduced in \cite{KNS}, consisting of all ``convergent'' infinite linear combinations with coefficients in $\BZ[q, q^{-1}][P]$ of the semi-infinite Schubert classes $[\CO_{\bQG(x)}]$, $x \in W_{\af}^{\geq 0} := \{ wt_{\xi} \in W_{\af} \mid w \in W, \xi \in Q^{\vee,+} \}$, where ``convergence'' holds in the sense of \cite[Proposition 5.11]{KNS}; here, $Q^{\vee,+} := \sum_{i \in I} \BZ_{\geq 0} \alpha_{i}^{\vee}$ denotes the positive part of the coroot lattice $Q^{\vee} = \sum_{i \in I} \BZ \alpha_{i}^{\vee}$.

Now, following \cite[Sect. 9]{NOS}, we recall how the graded characters of level-zero Demazure submodules over the quantum affine algebra $U_{\q}(\Fg_{\af})$ are related to the equivariant $K$-group $K_{H \times \BC^{\ast}}(\bQG)$ of the semi-infinite flag manifold $\bQG$. 
Let us define $\BC[q, q^{-1}][P]$-modules $\Fun$, $\Funneg$, and $\Funess$ by 
\begin{align*}
\Fun &:= \{ f: P \rightarrow \BC\pra{q^{-1}}[P] \}, \\
\Funneg &:= \left\{ f \in \Fun \ \middle| \ \parbox{13em}{There exists $\gamma \in P$ such that $f(\mu) = 0$ for all $\mu \in \gamma + P^{+}$} \right\}, \\ 
\Funess &:= \Fun / \Funneg.
\end{align*}
Then there exists an injective $\BZ[q, q^{-1}][P]$-module homomorphism $\Phi: K_{H \times \BC^{\ast}}(\bQG) \rightarrow \Funess$ such that 
for the class $[\mathcal{E}] \in K_{H \times \BC^{\ast}}(\bQG)$ of a certain quasi-coherent sheaf $\mathcal{E}$ on $\bQG$, the element $\Phi([\mathcal{E}]) \in \Funess$ is given as: 
\begin{equation*}
P \rightarrow \BC\pra{q^{-1}}[P], \ \lambda \mapsto \sum_{i = 0}^{\infty} (-1)^{i} \gch H^{i}(\bQG, \mathcal{E} \otimes_{\CO_{\bQG}} \CO(\lambda)); 
\end{equation*}
here, $\gch H^{i}(\bQG, \mathcal{E} \otimes_{\CO_{\bQG}} \CO(\lambda))$ for $i \geq 0$ is the graded character of the $i$-th cohomology group $H^{i}(\bQG, \mathcal{E} \otimes_{\CO_{\bQG}} \CO(\lambda))$, which is regarded as an $(H \times \BC^{\ast})$-module. 
Also, it is proved in \cite{KNS} that we can take $\mathcal{E} = \CO_{\bQG(x)}$ for $x \in W_{\af}^{\ge 0}$, and that
\begin{equation*}
\gch H^{i}(\bQG, \CO_{\bQG(x)} \otimes_{\CO_{\bQG}} \CO(\lambda)) = \begin{cases} \gch V_{x}^{-}(-w_{\circ}\lambda) & \text{if $\lambda \in P^{+}$ and $i = 0$, } \\ 0 & \text{otherwise; } \end{cases} 
\end{equation*}
where $w_{\circ}$ denotes the longest element of $W$. 
By making use of these results, we can translate an identity for graded characters of level-zero Demazure submodules into one in the ($H \times \BC^{\ast}$)-equivariant $K$-group $K_{H \times \BC^{\ast}}(\bQG)$. 
Namely, if we have a finite sum of the form \eqref{eq:inv_Chevalley_intro}, then we obtain the following identity in $K_{H \times \BC^{\ast}}(\bQG)$: 
\begin{equation*}
e^{\nu} \cdot [\CO_{\bQG(x)}] = \sum_{y \in W_{\af}, \ \mu \in P} d_{x, \nu}^{y, \mu} [\CO_{\bQG(y)} \otimes_{\CO_{\bQG}} \CO(-w_{\circ}\mu)]. 
\end{equation*}
In particular, our identities for graded characters of inverse Chevalley type yield explicit identities in the $(H \times \BC^{\ast})$-equivariant $K$-group $K_{H \times \BC^{\ast}}(\bQG)$, which we call \emph{inverse Chevalley forumlas}. 

In addition, by the specialization at $q = 1$ (of the coefficients $d_{x, \nu}^{y, \mu}$), we obtain corresponding inverse Chevalley formulas for equivariant scalar multiplication in the $H$-equivariant $K$-group $K_{H}(\bQG)$ of the semi-infinite flag manifold $\bQG$.
Here we mention that in \cite{Kat}, Kato established a $\BZ[P]$-module isomorphism from $K_{H}(\bQG)$ onto the formal completion $QK_{H}(G/B) \otimes_{\BZ[P][Q^{\vee,+}]} \BZ[P]\bra{Q^{\vee,+}}$ of the 
(small) $H$-equivariant quantum $K$-theory $QK_{H}(G/B) = K_{H}(G/B) \otimes_{\BZ[P]} \BZ[P][Q^{\vee,+}]$ of the finite-dimensional flag manifold $G/B$ which sends each semi-infinite Schubert class to the corresponding (opposite) Schubert class, where $\BZ[P][Q^{\vee,+}]$ (resp., $\BZ[P]\bra{Q^{\vee,+}}$) denotes the ring of polynomials (resp., formal power series) with coefficients in $\BZ[P]$ in the Novikov variables $Q_{i} = Q^{\alpha_{i}^{\vee}}$, $i \in I$. Through this $\BZ[P]$-module isomorphism, we obtain inverse Chevalley formulas for equivariant scalar multiplication in $QK_{H}(G/B)$.

This paper is organized as follows. In Section~\ref{sec:preliminaries}, we fix our basic notation, and recall the definitions of the quantum Bruhat graph and quantum alcove model. 
In Section~\ref{sec:Demazure}, we briefly review level-zero Demazure submodules and identities of Chevalley type for their graded characters. 
In Section~\ref{sec:results}, we state identities of inverse Chevalley type. Also, we give the cancellation-free form of the first half of these identities. 
In Section~\ref{sec:proof_IC}, we prove our identities of inverse Chevalley type. 
In Section~\ref{sec:cancellation-free_proof}, we derive the cancellation-free form of our identities of inverse Chevalley type in the first-half case. 

\subsection*{Acknowledgements}
T. K. was partly supported by JPSP Grant-in-Aid for Scientific Research 20J12058 and 22J00874.
S. N. was partly supported by JSPS Grant-in-Aid for Scientific Research (C) 21K03198.
D. O. was partly supported by a Collaboration Grant for Mathematicians from the Simons Foundation 638577. 

\section{Basic setting}\label{sec:preliminaries}
In this section, we fix basic notation, and review the definitions of the quantum Bruhat graph and quantum alcove model. 

\subsection{Lie algebras and root systems}

Let $\Fg$ be a simple Lie algebra over $\BC$ with Cartan subalgebra $\Fh$. 
Let $\Delta \subset \Fh^{\ast} := \mathrm{Hom}_{\BC}(\Fh, \BC)$ be the root system of $\Fg$, $\Delta^{+} \subset \Delta$ the set of positive roots, 
and $\{ \alpha_{i} \}_{i \in I} \subset \Delta^{+}$ the simple roots. 
We denote by $\pair{\cdot}{\cdot}$ the canonical pairing $\Fh^{\ast} \times \Fh \rightarrow \BC$. 
For $\alpha \in \Delta$, we define $\sgn(\alpha) \in \{ 1, -1 \}$ as 
\begin{equation*}
\sgn(\alpha) := \begin{cases} 1 & \text{if $\alpha \in \Delta^{+}$,} \\ -1 & \text{if $\alpha \in -\Delta^{+}$,} \end{cases}
\end{equation*}
and set $|\alpha| := \sgn(\alpha) \alpha \in \Delta^{+}$. 

For $\alpha \in \Delta$, we denote by $\alpha^{\vee} \in \Fh$ the coroot corresponding to $\alpha$, and define the fundamental weights $\vpi_{i}$, $i \in I$, by $\pair{\vpi_{i}}{\alpha_{j}^{\vee}} = \delta_{i,j}$ for $i, j \in I$. 
Let $P := \sum_{i \in I} \BZ \vpi_{i}$ be the weight lattice, 
$Q := \sum_{i \in I} \BZ \alpha_{i}$ the root lattice, and 
$Q^{\vee} := \sum_{i \in I} \BZ \alpha_{i}^{\vee}$ the coroot lattice. 
Elements of $P^{+} := \sum_{i \in I} \BZ_{\ge 0} \vpi_{i} (\subset P)$ are called dominant weights. 
We denote by $\BZ[P] := \sum_{\lambda \in P} \BZ e^{\lambda}$ the group algebra of $P$, where $\{ e^{\lambda} \mid \lambda \in P \}$ is a formal basis with
relations $e^{\lambda}e^{\mu} = e^{\lambda + \mu}$. 
Note that if $G$ is the connected, simply-connected and simple algebraic group over $\BC$ whose Lie algebra is $\Fg$, then the element $e^{\lambda}$ for $\lambda \in P$ also denotes the one-dimensional representation (character) of the maximal torus $H$ of $G$ of weight $\lambda$. 
In particular, $\BZ[P]$ is isomorphic to the representation ring $R(H)$ of the torus $H$. 

For $\alpha \in \Delta$, we define the reflection $s_{\alpha} \in GL(\Fh^{\ast})$ by $s_{\alpha}(\lambda) := \lambda - \pair{\lambda}{\alpha^{\vee}}\alpha$, $\lambda \in \Fh^{\ast}$. 
In particular, the reflection $s_{i} := s_{\alpha_{i}}$ for $i \in I$ is called a simple reflection. 
The Weyl group $W$ is defined as the subgroup of $GL(\Fh^{\ast})$ generated by $\{ s_{i} \}_{i \in I}$, i.e., 
$W = \langle s_{i} \mid i \in I \rangle \subset GL(\Fh^{\ast})$. 

\subsection{Type \texorpdfstring{$C$}{C} root system}
We review the standard realization of the root system of type $C$. Let $\{\ve_{1}, \ldots, \ve_{n}\}$ be the standard basis of $\BR^{n}$. 
Then, the set 
\begin{equation*}
\Delta = \{ \pm (\ve_{i} - \ve_{j}) \mid 1 \le i < j \le n \} \sqcup \{ \pm (\ve_{i} + \ve_{j}) \mid 1 \le i < j \le n \} \sqcup \{ \pm 2\ve_{k} \mid 1 \le k \le n \}
\end{equation*}
forms the root system of type $C_{n}$, and the set 
\begin{equation*}
\Delta^{+} = \{ \ve_{i} - \ve_{j} \mid 1 \le i < j \le n \} \sqcup \{ \ve_{i} + \ve_{j} \mid 1 \le i < j \le n \} \sqcup \{ 2\ve_{k} \mid 1 \le k \le n \}
\end{equation*}
is the set of all positive roots. In particular, $\alpha_{i}$, $i = 1, \ldots, n$, defined by 
\begin{equation*}
\alpha_{i} := \ve_{i} - \ve_{i+1} \ (1 \le i \le n-1), \quad \alpha_{n} := 2\ve_{n}
\end{equation*}
are the simple roots. 

For $1 \le i < j \le n$, we set 
\begin{equation*}
(i, j) := \ve_{i} - \ve_{j}, \quad (i, \overline{j}) := \ve_{i} + \ve_{j}, \quad (i, \overline{i}) := 2\ve_{i}. 
\end{equation*}
The Weyl group $W$ of type $C_{n}$ is realized as a subgroup of the permutation group of the set $[\overline{n}] := \{1, 2, \ldots, n, \overline{n}, \overline{n-1}, \ldots, \overline{1}\}$ 
by identifying simple reflections $s_{1}, \ldots, s_{n-1}, s_{n}$ with transpositions $(1 \ 2), \ldots, (n-1 \ n), (n \ \overline{n})$, respectively. 

\subsection{The quantum Bruhat graph}
The quantum Bruhat graph is a labeled directed graph on the Weyl group $W$, introduced by Brenti-Fomin-Postnikov \cite{BFP}. 

\begin{defn} [{\cite[Definition~6.1]{BFP}}]
The \emph{quantum Bruhat graph} $\QBG(W)$ is the $\Delta^{+}$-labeled directed graph whose vertex set is $W$, and whose edges are given as follows. 
For $x, y \in W$ and $\alpha \in \Delta^{+}$, we have a directed edge $x \xrightarrow{\alpha} y$ if $y = xs_{\alpha}$, and either of the following holds: 
(B) $\ell(y) = \ell(x) + 1$, or (Q) $\ell(y) = \ell(x) - 2 \pair{\rho}{\alpha^{\vee}} + 1$, where $\rho := (1/2)\sum_{\alpha \in \Delta^{+}} \alpha$. 
If the condition (B) (resp., (Q)) holds, then the corresponding edge $x \xrightarrow{\alpha} y$ is called a \emph{Bruhat edge} (resp., \emph{quantum edge}). 
\end{defn}

For a directed path $\bp: w_{0} \xrightarrow{\gamma_{1}} w_{1} \xrightarrow{\gamma_{2}} \cdots \xrightarrow{\gamma_{r}} w_{r}$ in $\QBG(W)$, we define $\wt(\bp) \in Q^{\vee}$ by 
\begin{equation*}
\wt(\bp) := \sum_{\substack{1 \le k \le r \\ \text{$w_{k-1} \xrightarrow{\gamma_{k}} w_{k}$ is a quantum edge}}} \gamma_{k}^{\vee}. 
\end{equation*}

\subsection{The quantum alcove model}
We briefly review the theory of quantum alcove model, first introduced by Lenart-Lubovsky \cite{LL}, and then generalized by \cite{LNS}. 

We set $\Fh_{\BR}^{\ast} := P \otimes_{\BZ} \BR$. For $\alpha \in \Delta$ and $k \in \BZ$, we define a hyperplane $H_{\alpha, k}$ in $\Fh_{\BR}^{\ast}$ by 
\begin{equation*}
H_{\alpha, k} := \{ \xi \in \Fh_{\BR}^{\ast} \mid \pair{\xi}{\alpha^{\vee}} = k \}; 
\end{equation*}
we denote by $s_{\alpha, k}$ the reflection for the hyperplane $H_{\alpha, k}$. Connected components of the space $\Fh_{\BR}^{\ast} \setminus \bigcup_{\alpha \in \Delta, \ k \in \BZ} H_{\alpha, k}$ are called \emph{alcoves}. 
Two alcoves $A$, $B$ are called \emph{adjacent} if the closures of $A$ and $B$ have an intersection, called a \emph{common wall}. 

\begin{defn} [{\cite[Definition~5.2]{LP1}}]
A sequence $(A_{0}, A_{1}, \ldots, A_{r})$ of alcoves $A_{0}, \ldots, A_{r}$ is called an \emph{alcove path} if $A_{i-1}$ and $A_{i}$ are adjacent for each $i = 1, \ldots, r$. An alcove path $\Gamma = (A_{0}, \ldots, A_{r})$ is called \emph{reduced} if $\Gamma$ has a minimal length $r$ among all alcove paths from $A_{0}$ to $A_{r}$. 
\end{defn}

For adjacent alcoves $A$, $B$, and a root $\alpha \in \Delta$, we write $A \xrightarrow{\alpha} B$ if the common wall of $A$ and $B$ is contained in the hyperplane $H_{\alpha, k}$ for some $k \in \BZ$, and $\alpha$ points in a direction from $A$ to $B$ (as a direction vector). 
We take a special alcove $A_{\circ}$, called the \emph{fundamental alcove}, defined by 
\begin{equation*}
A_{\circ} := \{ \xi \in \Fh_{\BR}^{\ast} \mid \text{$0 < \pair{\xi}{\alpha^{\vee}} < 1$ for all $\alpha \in \Delta^{+}$} \}. 
\end{equation*}
For $\lambda \in P$, we define $A_{\lambda}$ by 
\begin{equation*}
A_{\lambda} := A_{\circ} + \lambda = \{ \xi + \lambda \mid \xi \in A_{\circ} \}. 
\end{equation*}

\begin{defn} [{\cite[Definition~5.4]{LP1}}]
Let $\lambda \in P$. A sequence $\Gamma = (\gamma_{1}, \ldots, \gamma_{r})$ of roots $\gamma_{1}, \ldots, \gamma_{r} \in \Delta$ is called a \emph{$\lambda$-chain} if there exists an alcove path $(A_{\circ} = A_{0}, A_{1}, \ldots, A_{r} = A_{-\lambda})$ such that 
\begin{equation*}
A_{\circ} = A_{0} \xrightarrow{-\gamma_{1}} A_{1} \xrightarrow{-\gamma_{2}} \cdots \xrightarrow{-\gamma_{r}} A_{r} = A_{-\lambda}. 
\end{equation*}
We say that $\Gamma$ is \emph{reduced} if the corresponding alcove path $(A_{0}, \ldots, A_{r})$ is reduced.
\end{defn}

Let $\Gamma$ be a sequence of roots, i.e., $\Gamma = (\gamma_{1}, \ldots, \gamma_{r})$, with $\gamma_{k} \in \Delta$, $k = 1, \ldots, r$. 
\begin{defn} [{\cite[Definition~17]{LNS}}]
Let $w \in W$. A subset $A = \{i_{1} < \cdots < i_{s}\} \subset \{1, \ldots, r\}$ is called \emph{$w$-admissible} if
\begin{equation*}
w = w_{0} \xrightarrow{|\gamma_{i_{1}}|} w_{1} \xrightarrow{|\gamma_{i_{2}}|} \cdots \xrightarrow{|\gamma_{i_{s}}|} w_{s}
\end{equation*}
is a directed path in $\QBG(W)$. 
In this case, we define $\ed(A)$ by $\ed(A) := w_{s}$. 
Also, we set 
\begin{equation*}
A^{-} := \{ k \in A \mid \text{the edge $w_{k-1} \xrightarrow{|\gamma_{k}|} w_{k}$ is a quantum edge}\}, 
\end{equation*}
and then define $\down(A)$ by 
\begin{equation*}
\down(A) := \sum_{k \in A^{-}} |\gamma_{k}|^{\vee}. 
\end{equation*}
Also, we set 
\begin{equation*}
n(A) := \# \{ k \in A \mid \gamma_{k} \in -\Delta^{+} \}. 
\end{equation*}
We denote by $\CA(w, \Gamma)$ the set of all $w$-admissible subsets. 
\end{defn}

Let $\Gamma_{1}, \ldots, \Gamma_{r}$ be sequences of roots, and $w \in W$. 
For a tuple $(A_{1}, A_{2} \ldots, A_{r})$ of admissible subsets $A_{1} \in \CA(w, \Gamma_{1}), A_{2} \in \CA(\ed(A_{1}), \Gamma_{2}), \ldots, A_{r} \in \CA(\ed(A_{r-1}), \Gamma_{r})$, 
we set 
\begin{equation*}
\down(A_{1}, A_{2}, \ldots, A_{r}) := \down(A_{1}) + \down(A_{2}) + \cdots + \down(A_{r}). 
\end{equation*}

If $\Gamma$ is a $\lambda$-chain for some $\lambda \in P$, then we can consider additional statistics denoted by $\wt$, $\height$. 
For a $\lambda$-chain $\Gamma = (\gamma_{1}, \ldots, \gamma_{r})$ with $\lambda \in P$,
let $(A_{\circ} = A_{0}, \ldots, A_{r} = A_{-\lambda})$ be the alcove path corresponding to $\Gamma$, 
and take integers $l_{k} \in \BZ$, $k = 1, \ldots, r$, such that the common wall of adjacent alcoves $A_{k-1}$ and $A_{k}$ is contained in the hyperplane $H_{\gamma_{k}, -l_{k}}$. Then, we define $\wt(A)$ and $\height(A)$ for $A = \{i_{1} < \cdots < i_{s}\}$ by 
\begin{equation*}
\wt(A) := -ws_{\gamma_{i_{1}}, -l_{i_{1}}} \cdots s_{\gamma_{i_{s}}, -l_{i_{s}}}(-\lambda), \quad \height(A) := \sum_{k \in A^{-}} \sgn(\gamma_{k}) (\pair{\lambda}{\gamma_{k}^{\vee}} - l_{k}). 
\end{equation*}

\subsection{Specific chains of roots}

In this subsection, we deal with the root system of type $C_{n}$. 
We choose specific $(-\vpi_{k-1}+\vpi_{k})$-chain and $(\vpi_{k-1}-\vpi_{k})$-chain, which will play a crucial role in this paper; 
we understand that $\vpi_{0} = 0$ in this paper. 
Note that $-\vpi_{k-1}+\vpi_{k} = \ve_{k}$. 
We set 
\begin{align*}
\begin{split}
\Gamma_{k}(k) &:= (-(1, \overline{k}), \ldots, -(k-1, \overline{k}), \\ 
& \quad \quad -(k, \overline{k+1}), \ldots, -(k, \overline{n}), \\ 
& \quad \quad -(k, \overline{k}), \\ 
& \quad \quad -(k, n), \ldots, (k, k+1)), 
\end{split} \\ 
\begin{split}
\Gamma_{k}^{\ast}(k) &:= ((k, k+1), \ldots, (k,n), \\ 
& \quad \quad (k, \overline{k}), \\ 
& \quad \quad (k, \overline{n}), \ldots, (k, \overline{k+1}), \\ 
& \quad \quad (k-1, \overline{k}), \ldots, (1, \overline{k})), 
\end{split} \\
\Theta_{k} &:= (-(1, k), \ldots, -(k-1, k)), \\ 
\Theta_{k}^{\ast} &:= ((k-1, k), \ldots, (1, k)). 
\end{align*}

For sequences $\Gamma = (\gamma_{1}, \ldots, \gamma_{r}), \Xi = (\xi_{1}, \ldots, \xi_{s})$ of roots, we denote by $\Gamma \ast \Xi$ the concatenation of $\Gamma$ and $\Xi$, i.e., $\Gamma \ast \Xi := (\gamma_{1}, \ldots, \gamma_{r}, \xi_{1}, \ldots, \xi_{s})$. 

\begin{lem} \label{lem:ek-chain}
The concatenation $\Gamma_{k-1, k} := \Gamma_{k}^{\ast}(k) \ast \Theta_{k}$ is a reduced $(-\vpi_{k-1}+\vpi_{k})$-chain. 
\end{lem}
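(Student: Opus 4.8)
The plan is to realize $\Gamma_{k-1,k}$ as the sequence of roots (with orientations) read off a single generic straight-line segment joining the fundamental alcove $A_{\circ}$ to $A_{-\ve_{k}}$. Since $-\vpi_{k-1}+\vpi_{k}=\ve_{k}$, this is exactly what the statement requires, and the straight-line construction yields reducedness for free: along a generic segment the affine-linear function $t\mapsto\pair{\sigma(t)}{\alpha^{\vee}}$ meets any level at most once, so each hyperplane $H_{\alpha,j}$ is crossed at most once. Hence the associated alcove path crosses precisely the hyperplanes separating $A_{\circ}$ from $A_{-\ve_{k}}$, each exactly once, and is therefore automatically reduced. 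As a numerical cross-check, the number of separating hyperplanes is $\sum_{\alpha\in\Delta^{+}}|\pair{\ve_{k}}{\alpha^{\vee}}|=(k-1)+(n-k)+(k-1)+(n-k)+1=2n-1$, which matches the length of $\Gamma_{k}^{\ast}(k)\ast\Theta_{k}$.

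Concretely, I would take $\xi_{0}=(x_{1},\dots,x_{n})$ with $x_{i}=\tfrac{1}{2}-i\epsilon$ for a sufficiently small $\epsilon>0$, and set $\sigma(t):=\xi_{0}-t\ve_{k}$ for $t\in[0,1]$. First I would check $\xi_{0}\in A_{\circ}$, i.e.\ the type-$C_{n}$ alcove inequalities $0<\pair{\xi_{0}}{\alpha^{\vee}}<1$ for all $\alpha\in\Delta^{+}$; in coordinates the binding ones are $x_{i}-x_{j}$, $x_{i}+x_{j}$ and $x_{k}$, all in $(0,1)$ for small $\epsilon$. This guarantees $\sigma(0)\in A_{\circ}$ and $\sigma(1)\in A_{-\ve_{k}}$. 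For each $\alpha\in\Delta^{+}$ one has $\pair{\sigma(t)}{\alpha^{\vee}}=\pair{\xi_{0}}{\alpha^{\vee}}-t\pair{\ve_{k}}{\alpha^{\vee}}$ with $\pair{\xi_{0}}{\alpha^{\vee}}\in(0,1)$, so a crossing occurs exactly when $\pair{\ve_{k}}{\alpha^{\vee}}\neq 0$, i.e.\ for the $2n-1$ positive roots involving the index $k$. These split into the roots with $\pair{\ve_{k}}{\alpha^{\vee}}=1$ (namely $\ve_{k}-\ve_{j}$ and $\ve_{k}+\ve_{j}$ for $j>k$, $\ve_{i}+\ve_{k}$ for $i<k$, and $2\ve_{k}$), which will supply the positive chain-roots of $\Gamma_{k}^{\ast}(k)$, and the roots $\ve_{i}-\ve_{k}$ with $i<k$, for which $\pair{\ve_{k}}{\alpha^{\vee}}=-1$, supplying the negative chain-roots $-(i,k)$ of $\Theta_{k}$. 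The orientation is forced by the sign of $\pair{-\ve_{k}}{\alpha^{\vee}}$: the direction-of-travel label across $H_{\alpha,j}$ is $-\alpha$ in the first case and $+\alpha$ in the second, which under the defining convention $A_{i-1}\xrightarrow{-\gamma_{i}}A_{i}$ gives $\gamma_{i}=\alpha$ (positive) and $\gamma_{i}=-\alpha$ (negative) respectively, exactly matching the signs appearing in $\Gamma_{k}^{\ast}(k)$ and $\Theta_{k}$.

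It then remains to compute the crossing times and check that sorting them reproduces the prescribed order. With the chosen $\xi_{0}$ the crossing time is $\pair{\xi_{0}}{\alpha^{\vee}}$ for the $\pair{\ve_{k}}{\alpha^{\vee}}=1$ roots and $1-\pair{\xi_{0}}{\alpha^{\vee}}$ for the $\ve_{i}-\ve_{k}$ roots; explicitly these are $(j-k)\epsilon$ for $\ve_{k}-\ve_{j}$, then $\tfrac{1}{2}-k\epsilon$ for $2\ve_{k}$, then $1-(k+j)\epsilon$ for $\ve_{k}+\ve_{j}$, then $1-(i+k)\epsilon$ for $\ve_{i}+\ve_{k}$, and $1-(k-i)\epsilon$ for $\ve_{i}-\ve_{k}$. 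For small $\epsilon$ these are pairwise distinct and sort into the consecutive blocks $(k,k+1),\dots,(k,n)$; then $(k,\overline{k})$; then $(k,\overline{n}),\dots,(k,\overline{k+1})$; then $(k-1,\overline{k}),\dots,(1,\overline{k})$; and finally $-(1,k),\dots,-(k-1,k)$, i.e.\ exactly $\Gamma_{k}^{\ast}(k)\ast\Theta_{k}$. This ordering step is the one requiring genuine care, and is the main obstacle: one must verify that every crossing from $\Gamma_{k}^{\ast}(k)$ strictly precedes every crossing from $\Theta_{k}$, and that the internal orders are correct. Both reduce to an elementary comparison of the $\epsilon$-offsets: near $t=1$ the $\Gamma_{k}^{\ast}(k)$-crossings have offsets $\geq k+1$ while all $\Theta_{k}$-offsets are $\leq k-1$, so the two blocks cannot interleave, and monotonicity of the offset in the summation index fixes the order within each block. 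Distinctness of all crossing times certifies genericity (one hyperplane crossed at a time), so the segment produces a genuine reduced alcove path whose chain is $\Gamma_{k-1,k}$, completing the proof.
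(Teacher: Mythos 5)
Your proof is correct, but it takes a genuinely different route from the paper's. The paper deduces the lemma from \cite[Lemma~4.1]{LNOS}: it writes $x = s_{k}\cdots s_{n}s_{n-1}\cdots s_{1}$ and $y = s_{1}\cdots s_{k-1}$ as minimal coset representatives for the minuscule weight $\vpi_{1}$, builds the chain from these reduced words, and checks by direct calculation that the result equals $\Gamma_{k-1,k}$; reducedness is inherited from the cited lemma. You instead go back to the original Lenart--Postnikov construction of a $\lambda$-chain from a generic line segment, choosing $\xi_{0}=(\tfrac12-\epsilon,\ldots,\tfrac12-n\epsilon)\in A_{\circ}$ and reading off the crossings of $\sigma(t)=\xi_{0}-t\ve_{k}$. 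I checked the details: the $2n-1$ crossed hyperplanes are exactly those $H_{\alpha,j}$ with $\pair{\ve_{k}}{\alpha^{\vee}}\neq 0$ (the roots with $\pair{\ve_{k}}{\alpha^{\vee}}=0$ give constant, non-integral values, so genericity holds together with the distinctness of the crossing times); the sign convention $A_{i-1}\xrightarrow{-\gamma_{i}}A_{i}$ does force $\gamma_{i}=\alpha$ at the level-$0$ crossings and $\gamma_{i}=-(i,k)$ at the level-$1$ crossings of $H_{\ve_{i}-\ve_{k},1}$, consistent with Remark~\ref{rem:common_wall}; and the sorted crossing times $(j-k)\epsilon$, $\tfrac12-k\epsilon$, $1-(k+j)\epsilon$, $1-(i+k)\epsilon$, $1-(k-i)\epsilon$ reproduce exactly the order of $\Gamma_{k}^{\ast}(k)\ast\Theta_{k}$, with the block separation argument (offsets $\geq k+1$ versus $\leq k-1$) being sound. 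Your reducedness argument — each separating hyperplane crossed exactly once and no others, hence length equals the number of separating walls, which is the minimum possible — is the standard one and is valid. What your approach buys is self-containedness and an explicit verification in coordinates, at the cost of more computation; what the paper's approach buys is brevity and the extra structural information of Remark~\ref{rem:common_wall} coming for free from \cite{LNOS} (information which your computation in fact also yields, namely which crossings occur at level $0$ and which at level $1$).
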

\begin{proof}
We set $x := s_{k}s_{k+1} \cdots s_{n} s_{n-1} \cdots s_{1}$, $y := s_{1} \cdots s_{k-1}$, and $\mu := \vpi_{1}$. 
Then, $x$ is a minimal-length representative for the coset $xW_{\mu}$, where $W_{\mu} := \{ w \in W \mid w\mu = \mu\}$, $yx$ is the minimal-length representative for the coset $\{ w \in W \mid w\mu = w_{\circ}\mu\}$, and $x\mu = -(-\vpi_{k-1} + \vpi_{k})$. 
Now, following \cite[Lemma~4.1]{LNOS}, we define $\Gamma$ as follows. 
Let us write $x = s_{j_{a}} \cdots s_{j_{1}}$, $y = s_{i_{1}} \cdots s_{i_{b}}$, and set 
\begin{align*}
\beta_{c} &:= s_{j_{a}} \cdots s_{j_{c+1}} \alpha_{j_{c}}, \quad 1 \le c \le a, \\ 
\zeta_{d} &:= s_{i_{b}} \cdots s_{i_{d+1}} \alpha_{i_{d}}, \quad 1 \le d \le b. 
\end{align*}
Then we define $\Gamma$ as $\Gamma := (\beta_{1}, \ldots, \beta_{a}, -\gamma_{1}, \ldots, -\gamma_{b})$; 
note that the convention for the sign of roots in alcove paths in this paper is different from that of \cite{LNOS}. 
By direct calculation, we see that this $\Gamma$ is identical to $\Gamma_{k-1, k}$. 
Since $\mu$ is a minuscule fundamental weight, the argument in the proof of \cite[Lemma~4.1]{LNOS} still works in our setting of the type $C$ root system, 
and hence we obtain the following reduced alcove path $\Pi$ from $A_{\circ}$ to $A_{\circ} + x\mu = A_{\circ} - (-\vpi_{k-1} + \vpi_{k}) = A_{-(-\vpi_{k-1} + \vpi_{k})}$: 
\begin{equation*}
\begin{split}
\Pi:{} & A_{\circ} = A_{0} \xrightarrow{-\beta_{1}} A_{1} \xrightarrow{-\beta_{2}} \cdots \xrightarrow{-\beta_{a}} A_{a} = B_{0} \\ 
& \xrightarrow{\zeta_{1}} B_{1} \xrightarrow{\zeta_{2}} \cdots \xrightarrow{\zeta_{b}} B_{b} = A_{-(-\vpi_{k-1} + \vpi_{k})}. 
\end{split}
\end{equation*}
Thus we have shown that $\Gamma_{k-1, k}$ is a reduced $(-\vpi_{k-1}+\vpi_{k})$-chain corresponding to $\Pi$. 
This proves the lemma. 
\end{proof}

\begin{rem} \label{rem:common_wall}
The proof of \cite[Lemma~4.1]{LNOS} also shows that for $t = 1, \ldots, a$, the common wall of the adjacent alcoves $A_{t-1}$ and $A_{t}$ in the above path $\Pi$ 
is contained in the hyperplane $H_{\beta_{t}, 0}$, 
while for $t = 1, \ldots, b$, the common wall of the adjacent alcoves $B_{t-1}$ and $B_{t}$ is contained in the hyperplane $H_{\zeta_{t}, 1}$. 
\end{rem}

By reversing the order of roots in $\Gamma$ and negating all roots, we obtain a specific $(\vpi_{k-1} - \vpi_{k})$-chain. 
\begin{cor} \label{cor:-ek-chain}
The concatenation $\Gamma_{k-1, k}^{\ast} := \Theta_{k}^{\ast} \ast \Gamma_{k}(k)$ is a reduced $(\vpi_{k-1} - \vpi_{k})$-chain. 
\end{cor}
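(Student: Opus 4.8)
The plan is to isolate a general ``reversal'' principle for reduced chains and then apply it to the chain produced by Lemma~\ref{lem:ek-chain}. Precisely, I would first prove the following claim: if $\Gamma = (\gamma_{1}, \ldots, \gamma_{r})$ is a reduced $\lambda$-chain, then the reversed-and-negated sequence $(-\gamma_{r}, -\gamma_{r-1}, \ldots, -\gamma_{1})$ is a reduced $(-\lambda)$-chain. Granting this, the corollary follows by taking $\lambda = -\vpi_{k-1} + \vpi_{k}$ and $\Gamma = \Gamma_{k-1, k} = \Gamma_{k}^{\ast}(k) \ast \Theta_{k}$, which is a reduced $\lambda$-chain by Lemma~\ref{lem:ek-chain}; then $-\lambda = \vpi_{k-1} - \vpi_{k}$, and the resulting reduced $(-\lambda)$-chain is the reverse-and-negate of $\Gamma_{k-1, k}$.

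To prove the reversal principle, I would start from the alcove path $A_{\circ} = A_{0} \xrightarrow{-\gamma_{1}} A_{1} \xrightarrow{-\gamma_{2}} \cdots \xrightarrow{-\gamma_{r}} A_{r} = A_{-\lambda}$ attached to $\Gamma$ and read it backwards. Reversing an edge flips its direction vector, so $A_{k} \xrightarrow{\gamma_{k}} A_{k-1}$ for each $k$, giving an alcove path from $A_{-\lambda}$ to $A_{\circ}$. To return to the standard normalization (starting at $A_{\circ}$), I would translate the whole reversed path by $\lambda$, setting $B_{i} := A_{r-i} + \lambda$; since $A_{-\lambda} = A_{\circ} - \lambda$, this yields $B_{0} = A_{\circ}$ and $B_{r} = A_{\circ} + \lambda = A_{-(-\lambda)}$, and translation preserves directions, so $B_{i-1} \xrightarrow{\gamma_{r-i+1}} B_{i}$. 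Matching this against the defining alcove path of a $(-\lambda)$-chain $(\delta_{1}, \ldots, \delta_{r})$, whose $i$-th edge carries the label $-\delta_{i}$, forces $\delta_{i} = -\gamma_{r-i+1}$, which is exactly the reverse-and-negate sequence. Reducedness transfers because the reversed-translated path has the same length $r$, and minimal length among alcove paths is preserved under reversal and translation.

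It then remains to check the purely combinatorial identity that the reverse-and-negate of $\Gamma_{k-1, k} = \Gamma_{k}^{\ast}(k) \ast \Theta_{k}$ equals $\Theta_{k}^{\ast} \ast \Gamma_{k}(k) = \Gamma_{k-1, k}^{\ast}$. Using that reversal-and-negation sends a concatenation $\Xi \ast \Psi$ to (reverse-negate of $\Psi$) $\ast$ (reverse-negate of $\Xi$), this reduces to the two elementary verifications that the reverse-negate of $\Theta_{k}$ is $\Theta_{k}^{\ast}$ and that the reverse-negate of $\Gamma_{k}^{\ast}(k)$ is $\Gamma_{k}(k)$; both are immediate from the explicit listings of these sequences, comparing the roots entry by entry.

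The only genuinely delicate point is the reversal principle itself, and within it the bookkeeping of edge directions together with the translation by $\lambda$: one must be careful that reversing the alcove path turns the label $-\gamma_{k}$ into $+\gamma_{k}$, and that the translation, while carrying the endpoint back to $A_{\circ}$, leaves all direction vectors (hence all labels) unchanged. Everything past that is routine index-chasing against the definitions recorded above.
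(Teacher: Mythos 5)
Your proposal is correct and is essentially the paper's own argument: the paper derives Corollary~\ref{cor:-ek-chain} from Lemma~\ref{lem:ek-chain} precisely ``by reversing the order of roots in $\Gamma$ and negating all roots,'' which is exactly your reversal-and-negation principle; you have merely written out the details (reversing the alcove path, translating by $\lambda$, and checking that reducedness and the labels transform as claimed), all of which are accurate. The final combinatorial check that the reverse-and-negate of $\Gamma_{k}^{\ast}(k) \ast \Theta_{k}$ is $\Theta_{k}^{\ast} \ast \Gamma_{k}(k)$ also matches the explicit listings in the paper.
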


\section{Level-zero Demazure submodules over quantum affine algebras}\label{sec:Demazure}
We recall the definition of level-zero Demazure submodules over quantum affine algebras and their graded characters. 

\subsection{Notation for affine Lie algebras and quantum affine algebras}
Let $\Fg_{\af} := (\Fg \otimes_{\BC} \BC [t, t^{-1}]) \oplus \BC c \oplus \BC d$ be the (untwisted) affine Lie algebra associated to $\Fg$, 
where $c$ is the canonical central element and $d$ is the scaling element. We denote by $\Fh_{\af}$ its Cartan subalgebra. 
Let $\pair{\cdot}{\cdot}$ be the canonical pairing $\Fh_{\af}^{\ast} \times \Fh_{\af} \rightarrow \BC$, 
where $\Fh_{\af}^{\ast} = \mathrm{Hom}_{\BC}(\Fh_{\af}, \BC)$. 
We set $I_{\af} := I \sqcup \{0\}$. Then, the simple roots $\alpha_{i}$, $i \in I \subsetneq I_{\af}$, of $\Fg$ can be regarded as simple roots of $\Fg_{\af}$. Let $s_{i}$, $i \in I_{\af}$, be the simple reflections corresponding to $\alpha_{i}$. 
Let $W_{\af} := \langle s_i \mid i \in I_{\af} \rangle$ denote the (affine) Weyl group of $\Fg_{\af}$. 
We know that $W_{\af} = \{ wt_{\xi} \mid w \in W, \ \xi \in Q^{\vee} \} \simeq W \ltimes Q^{\vee}$, where $t_{\xi}$, $\xi \in Q^{\vee}$, is the translation element (\cite{Kac}). 

Let $U_{\q}(\Fg_{\af})$ be the quantum affine algebra associated to $\Fg_{\af}$, 
and denote by $E_{i}, F_{i}$, $i \in I_{\af} = I \sqcup \{0\}$, the Chevalley generators of $U_{\q}(\Fg_{\af})$. 
Then, we define $U_{\q}^{-}(\Fg_{\af})$ as the subalgebra of $U_{\q}(\Fg_{\af})$ generated by $\{ F_{i} \mid i \in I_{\af} \}$, i.e., $U_{\q}^{-}(\Fg_{\af}) = \langle F_{i} \mid i \in I_{\af} \rangle$. 

\subsection{Extremal weight submodules and level-zero Demazure submodules}

\begin{defn} [{\cite[Definition~8.1.1]{Kas}}] \label{def:extremal}
Let $M$ be an integrable $U_{\q}(\Fg_{\af})$-module, and $\lambda \in P_{\af}$. An element $v \in M$ is called an \emph{extremal weight vector of weight $\lambda$} if $v$ is a weight vector of $\lambda$, and there exists a family $\{v_{x} \mid x \in W_{\af}\} \subset M$ of vectors such that 
\begin{enu}
\item $v_{e} = v$, 
\item for $i \in I_{\af}$ and $x \in W_{\af}$, if $\pair{x\lambda}{\alpha_{i}^{\vee}} \ge 0$, then $E_{i}v_{x} = 0$ and $F_{i}^{(\pair{x\lambda}{\alpha_{i}^{\vee}})}v_{x} = v_{s_{i}x}$, and 
\item for $i \in I_{\af}$ and $x \in W_{\af}$, if $\pair{x\lambda}{\alpha_{i}^{\vee}} \le 0$, then $F_{i}v_{x} = 0$ and $E_{i}^{(-\pair{x\lambda}{\alpha_{i}^{\vee}})}v_{x} = v_{s_{i}x}$, 
\end{enu}
where $F_{i}^{(k)}$ and $E_{i}^{(k)}$, $k \ge 0$, denote the divided powers. 
\end{defn}

For $\lambda \in P_{\af}$, the \emph{extremal weight module} of weight $\lambda$, denoted by $V(\lambda)$, is the integrable weight module over $U_{\q}(\Fg_{\af})$ whose generator is a single element $v_{\lambda}$, and whose defining relation is that ``$v_{\lambda}$ is an extremal weight vector of weight $\lambda$''; for the precise definition of extremal weight modules, see \cite[Proposition~8.2.2]{Kas}. 

Let $\lambda \in P^{+} \subset P_{\af}$, and $x \in W_{\af}$. 
By the definition of extremal weight vectors, there exists a family $\{v_{x} \mid x \in W_{\af}\} \subset V(\lambda)$ of vectors satisfying the conditions in Definition~\ref{def:extremal}, with $v = v_{\lambda}$. 
The \emph{level-zero Demazure submodule} $V_{x}^{-}(\lambda)$ is a $U_{\q}^{-}(\Fg_{\af})$-submodule of $V(\lambda)$ generated by $v_{x}$, i.e., $V_{x}^{-}(\lambda) = U_{\q}^{-}(\Fg_{\af}) v_{x}$. 
For $\nu \in P_{\af}$, we denote by $V_{x}^{-}(\lambda)_{\nu}$ the weight space of $V_{x}^{-}(\lambda)$ of weight $\nu \in P_{\af}$. 
Then we have the following weight space decomposition with respect to $\Fh_{\af}$: 
\begin{equation*}
V_{x}^{-}(\lambda) = \bigoplus_{\gamma \in Q, \ k \in \BZ} V_{x}^{-}(\lambda)_{\lambda + \gamma + k\delta}, 
\end{equation*}
where each weight space $V_{x}^{-}(\lambda)_{\lambda + \gamma + k\delta}$, $\gamma \in Q$, $k \in \BZ$, is a finite-dimensional $\BC(\q)$-vector space; 
here, $\delta$ denotes the (primitive) null root of $\Fg_{\af}$.
Now we define the \emph{graded character} of $V_{x}^{-}(\lambda)$ by 
\begin{equation*}
\gch V_{x}^{-}(\lambda) := \sum_{\gamma \in Q, \ k \in \BZ} \dim(V_{x}^{-}(\lambda)_{\lambda + \gamma + k\delta}) q^{k} e^{\lambda + \gamma} \in \BZ[P]\pra{q^{-1}}, 
\end{equation*}
where $q$ is an indeterminate (not to be confused with $\q$). 

The following identity is useful to compute graded characters of level-zero Demazure submodules.
\begin{prop} [{\cite[Proposition~D.1]{KNS}}] \label{prop:gch_translation}
Let $x \in W_{\af}$ and $\lambda \in P^{+}$. For $\xi \in Q^{\vee}$, we have 
\begin{equation*}
\gch V_{xt_{\xi}}^{-}(\lambda) = q^{-\pair{\lambda}{\xi}} \gch V_{x}^{-}(\lambda). 
\end{equation*}
\end{prop}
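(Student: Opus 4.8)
The plan is to reduce the statement to a uniform degree shift between the two Demazure submodules, and then to read off the effect of that shift on graded characters.

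First I would compute the weight of the cyclic generator $v_{xt_{\xi}}$. Writing the action of the affine Weyl group via $W_{\af} = W \ltimes Q^{\vee}$ and using that a level-zero weight satisfies $t_{\xi}\lambda = \lambda - \pair{\lambda}{\xi}\delta$, together with the $W_{\af}$-invariance of $\delta$, one gets
\[
\wt(v_{xt_{\xi}}) = (xt_{\xi})\lambda = x\bigl(\lambda - \pair{\lambda}{\xi}\delta\bigr) = x\lambda - \pair{\lambda}{\xi}\delta = \wt(v_{x}) - \pair{\lambda}{\xi}\delta .
\]
The crucial elementary observation is that $\pair{\delta}{\alpha_{i}^{\vee}} = 0$ for every $i \in I_{\af}$. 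Hence $v_{x}$ and $v_{xt_{\xi}}$, whose weights differ exactly by the multiple $-\pair{\lambda}{\xi}\delta$ of $\delta$, carry identical local data: $\pair{x\lambda}{\alpha_{i}^{\vee}} = \pair{(xt_{\xi})\lambda}{\alpha_{i}^{\vee}}$ for all $i \in I_{\af}$. Both are therefore extremal weight vectors whose surrounding $\mathfrak{sl}_{2}$-strings, and hence the divided-power relations of Definition~\ref{def:extremal}, match.

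Second, I would upgrade this coincidence to a $U_{\q}^{-}(\Fg_{\af})$-module isomorphism $V_{x}^{-}(\lambda) \xrightarrow{\sim} V_{xt_{\xi}}^{-}(\lambda)$ sending $v_{x} \mapsto v_{xt_{\xi}}$. The cleanest device is the degree-shifting symmetry of the extremal weight module $V(\lambda)$ from Kashiwara's theory: the commuting family of weight-$\delta$ module automorphisms of $V(\lambda)$ through which the translations in $W_{\af}$ act. The automorphism corresponding to $t_{\xi}$ has weight $-\pair{\lambda}{\xi}\delta$ and sends $v_{x}$ to $v_{xt_{\xi}}$; being a $U_{\q}(\Fg_{\af})$-module automorphism, it restricts to an isomorphism of the cyclic submodules $U_{\q}^{-}(\Fg_{\af})v_{x} \to U_{\q}^{-}(\Fg_{\af})v_{xt_{\xi}}$ that shifts every weight uniformly by $-\pair{\lambda}{\xi}\delta$. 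Equivalently, one may argue on crystal bases: the Demazure crystal of $V_{xt_{\xi}}^{-}(\lambda)$ is obtained from that of $V_{x}^{-}(\lambda)$ by the weight translation $-\pair{\lambda}{\xi}\delta$, which suffices since weight multiplicities coincide with crystal cardinalities. Either way one obtains, for all $\gamma \in Q$ and $k \in \BZ$,
\[
\dim V_{xt_{\xi}}^{-}(\lambda)_{\lambda+\gamma+(k-\pair{\lambda}{\xi})\delta} = \dim V_{x}^{-}(\lambda)_{\lambda+\gamma+k\delta}.
\]

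Finally, I would substitute this into the definition of $\gch V_{xt_{\xi}}^{-}(\lambda)$ and reindex the $\delta$-exponent by $k = m + \pair{\lambda}{\xi}$; the constant shift pulls the factor $q^{-\pair{\lambda}{\xi}}$ out of the sum and leaves exactly $\gch V_{x}^{-}(\lambda)$, which is the claim. The main obstacle is Step~2: the matching of the pairings $\pair{x\lambda}{\alpha_{i}^{\vee}} = \pair{(xt_{\xi})\lambda}{\alpha_{i}^{\vee}}$ only makes the isomorphism plausible and does not by itself construct the map, so the proof really rests on a genuine structural feature of $V(\lambda)$ implementing the $\delta$-shift as a symmetry (the weight-$\delta$ automorphisms, or the corresponding crystal isomorphism); everything else is formal weight bookkeeping.
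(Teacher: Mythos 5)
The paper itself contains no proof of this proposition: it is imported verbatim as \cite[Proposition~D.1]{KNS}, so there is no in-house argument to compare yours against. Judged on its own terms, your outline is correct and is essentially the standard argument. The weight computation $(xt_{\xi})\lambda = x\lambda - \pair{\lambda}{\xi}\delta$ for level-zero $\lambda$, the observation that $\pair{\delta}{\alpha_{i}^{\vee}} = 0$ for all $i \in I_{\af}$ so that the extremal-vector recursions for $v_{x}$ and $v_{xt_{\xi}}$ run in parallel, and the final reindexing of the $\delta$-exponent are all exactly right; and you correctly isolate the one non-formal ingredient, namely a $U_{\q}(\Fg_{\af})$-module isomorphism of weight $-\pair{\lambda}{\xi}\delta$ carrying $v_{x}$ to $v_{xt_{\xi}}$. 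One caution and one suggestion on that step. The caution: the weight-$\delta$ automorphisms $z_{i}$ from Kashiwara's theory are most directly available on $V(\vpi_{i})$ and on the tensor product $\bigotimes_{i} V(\vpi_{i})^{\otimes m_{i}}$ rather than on $V(\lambda)$ itself, so ``the translations act through them'' needs some unpacking before it yields $v_{x}\mapsto v_{xt_{\xi}}$ for all $x$ simultaneously. The suggestion: you can bypass that machinery entirely using the universal property of extremal weight modules. Since $v_{t_{\xi}}$ is an extremal weight vector of weight $\lambda - \pair{\lambda}{\xi}\delta$, there is a homomorphism $\Phi_{\xi}\colon V(\lambda - \pair{\lambda}{\xi}\delta) \to V(\lambda)$ sending the generator to $v_{t_{\xi}}$; identifying $V(\lambda - \pair{\lambda}{\xi}\delta)$ with $V(\lambda)$ up to the $\delta$-grading shift, your matching of the pairings $\pair{y\lambda}{\alpha_{i}^{\vee}} = \pair{(yt_{\xi})\lambda}{\alpha_{i}^{\vee}}$ shows, by induction along the recursion of Definition~\ref{def:extremal}, that $\Phi_{\xi}(v_{y}) = v_{yt_{\xi}}$ for every $y \in W_{\af}$; and $\Phi_{-\xi}\circ\Phi_{\xi}$ fixes the cyclic generator, hence is the identity, so $\Phi_{\xi}$ is an isomorphism. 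Restricting to $U_{\q}^{-}(\Fg_{\af})v_{x}$ gives the isomorphism $V_{x}^{-}(\lambda) \to V_{xt_{\xi}}^{-}(\lambda)$ shifting every weight by $-\pair{\lambda}{\xi}\delta$, and the character identity follows by the reindexing you describe.
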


\subsection{Identities of Chevalley type}
Let $\lambda \in P^{+}$ and $x \in W_{\af}$.
We consider the graded character $\gch V_{x}^{-}(\lambda + \mu)$, where $\mu \in P$ is such that $\lambda + \mu \in P^{+}$. For this, we need to introduce more notation. 
A \emph{partition} is a weakly decreasing sequence $\chi = (\chi_{1} \ge \cdots \ge \chi_{l})$ of positive integers $\chi_{1}, \ldots, \chi_{l} \in \BZ_{> 0}$; we call $l$ the \emph{length} of $\chi$.
Also, we set $|\chi| := \chi_{1} + \cdots + \chi_{l}$, the \emph{size} of $\chi$. 
If $\chi = \emptyset$, the empty partition, then we set $\ell(\chi) := 0$, and $|\chi| := 0$. 
Let $\mu \in P$, and write $\mu = \sum_{i \in I} m_{i} \vpi_{i}$. We define a set $\bPar(\mu)$ as follows: 
\begin{equation*}
\bPar(\mu) := \{ \bchi = (\chi^{(i)})_{i \in I} \mid \text{$\chi^{(i)}$, $i \in I$, are partitions such that $\ell(\chi^{(i)}) \le \max \{m_{i}, 0\}$} \}. 
\end{equation*}
For $\bchi = (\chi^{(i)})_{i \in I} \in \bPar(\mu)$, we write $\chi^{(i)} = (\chi_{1}^{(i)} \ge \cdots \ge \chi_{l_{i}}^{(i)})$, where $\chi_{1}^{(i)}, \ldots, \chi_{l_{i}}^{(i)} \in \BZ_{> 0}$, with $l_{i} = \ell(\chi^{(i)})$. We set 
\begin{equation*}
|\bchi| := \sum_{i \in I} |\chi^{(i)}|, \quad \iota(\bchi) := \sum_{i \in I} \chi_{1}^{(i)} \alpha_{i}^{\vee} \in Q^{\vee, +}, 
\end{equation*}
where, if $\chi^{(i)} = \emptyset$, then we set $\chi_{1}^{(i)} := 0$. 

Lenart-Naito-Sagaki \cite{LNS} and Kouno-Lenart-Naito \cite{KLN} proved the following identity, called \emph{the identity of Chevalley type}. 
\begin{thm} [{\cite[Theorem~33]{LNS} and \cite[Theorem~5.16]{KLN}}] \label{thm:Chevalley}
Let $\lambda \in P^{+}$, $\mu \in P$, and $x = wt_{\af} \in W_{\af}$ with $w \in W$ and $\xi \in Q^{\vee}$. Assume that $\lambda + \mu \in P^{+}$. 
Take a reduced $\mu$-chain $\Gamma$. Then, there holds the following identity: 
\begin{equation} \label{eq:Chevalley_LNS}
\begin{split}
& \gch V_{x}^{-}(\lambda + \mu) \\
&= \sum_{A \in \CA(w, \Gamma)} \sum_{\bchi \in \bPar(\mu)} (-1)^{n(A)} q^{-\height(A)-\pair{\lambda}{\xi}-|\bchi|} e^{\wt(A)} \gch V_{\ed(A)t_{\xi+\down(A)+\iota(\bchi)}}^{-}(\lambda). 
\end{split}
\end{equation}
\end{thm}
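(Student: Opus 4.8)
The plan is to regard this as a recalled result and to organize its proof around the translation covariance furnished by Proposition~\ref{prop:gch_translation} together with the quantum alcove model. The affine element is $x = wt_{\xi}$, and the role of the factor $t_{\xi}$---both in the left-hand side $\gch V_{wt_{\xi}}^{-}(\lambda + \mu)$ and in the subscripts $\ed(A)t_{\xi + \down(A) + \iota(\bchi)}$ on the right---is governed entirely by Proposition~\ref{prop:gch_translation}, which converts each translation by a coweight into an explicit power of $q$. Thus the essential content is the finite case $w \in W$, in which the translations that survive on the right arise solely from the coroot shifts $\down(A)$ and $\iota(\bchi)$, and I would establish that case first and then propagate the $t_{\xi}$-dependence by the translation formula.

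For the core case $w \in W$, I would realize $\gch V_{w}^{-}(\lambda)$ as a generating function over the quantum alcove model, following the identification of graded characters of level-zero Demazure submodules with sums over semi-infinite Lakshmibai--Seshadri paths that underlies \cite{LNS} and \cite{KLN}. Under this identification the statistics $\wt(A)$, $\height(A)$, $\down(A)$ and $n(A)$ attached to a $w$-admissible subset $A$ of a reduced $\mu$-chain $\Gamma$ record, respectively, the weight, the $q$-degree, the coroot translation in $Q^{\vee}$, and the sign contributed by the corresponding path. I would then prove the identity by induction on the length of the reduced $\mu$-chain---equivalently, by building $\mu$ up one fundamental weight at a time using the specific chains of Lemma~\ref{lem:ek-chain} and Corollary~\ref{cor:-ek-chain}---where a single step appends one root to $\Gamma$ and replaces each admissible path by all of its one-step extensions in $\QBG(W)$. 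The two edge types play distinct roles: a Bruhat edge contributes a $\wt$-shift while leaving the translation part untouched, whereas a quantum edge contributes the coroot $|\gamma|^{\vee}$ to $\down(A)$ together with the matching $q$-power through $\height(A)$. The alternating sign $(-1)^{n(A)}$ and the vanishing of spurious terms should follow from an Euler-characteristic (cohomology-vanishing) argument of the type used in \cite{KNS} and \cite{NOS}. The multipartition sum over $\bPar(\mu)$, carrying the data $|\bchi|$ and $\iota(\bchi)$, enters precisely when $\mu$ has a non-minuscule fundamental component: it encodes the geometric-series expansion of the associated line bundle in the Novikov (degree) direction, which is absent in the minuscule case.

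The main obstacle, I expect, is controlling the quantum part of the formula---the appearance of the finite quantum Bruhat graph and the coroot translations $\down(A)$---rather than the finite Bruhat part, which is classical. Making the inductive step rigorous requires (i) the invariance of the right-hand side under the choice of reduced $\mu$-chain, a Yang--Baxter/shellability-type statement for reduced $\lambda$-chains, and (ii) the ``quantum $=$ affine'' lifting properties of $\QBG(W)$ that align the affine combinatorics of Demazure submodules with the finite quantum Bruhat graph; these are the technical heart of \cite{LNS} and \cite{KLN}. A secondary hazard is the sign and normalization bookkeeping: the convention for the sign of roots in alcove paths used here differs from that of related papers, as flagged in the proof of Lemma~\ref{lem:ek-chain}, and one must also track the $-w_{\circ}$ twist relating graded characters to cohomology of line bundles on the semi-infinite flag manifold. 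Once these ingredients are in place, the stated identity follows and, after translating conventions, matches \cite[Theorem~33]{LNS} and \cite[Theorem~5.16]{KLN}.
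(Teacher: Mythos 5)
The first thing to say is that the paper does not prove Theorem~\ref{thm:Chevalley} at all: it is imported verbatim from \cite[Theorem~33]{LNS} and \cite[Theorem~5.16]{KLN}, with only a remark explaining that the cited result is stated in the equivariant $K$-group of the semi-infinite flag manifold and is equivalent to \eqref{eq:Chevalley_LNS}. So there is no in-paper argument to compare yours against; the ``correct'' proof in the context of this paper is simply the citation, which your first sentence effectively acknowledges. Judged on that basis your write-up is acceptable as a recollection, and your observation that the $t_{\xi}$-dependence is controlled by Proposition~\ref{prop:gch_translation} is the right first reduction. One concrete remark: if you actually carry out that reduction, you will find that consistency forces the explicit exponent in \eqref{eq:Chevalley_LNS} to be $-\pair{\mu}{\xi}$ rather than $-\pair{\lambda}{\xi}$ (the subscript $t_{\xi+\down(A)+\iota(\bchi)}$ already contributes $q^{-\pair{\lambda}{\xi}}$, and the left-hand side scales by $q^{-\pair{\lambda+\mu}{\xi}}$); this is a typo in the statement as printed, harmless here because the paper only ever applies the theorem with $\xi=0$, but it shows the value of actually executing the step you propose rather than describing it.

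Read as a standalone proof, however, your sketch has genuine gaps. The two ingredients you defer to---(i) independence of the right-hand side from the choice of reduced $\mu$-chain (the Yang--Baxter moves), and (ii) the lifting that matches the affine/semi-infinite combinatorics of $V_{x}^{-}(\lambda+\mu)$ with the finite quantum Bruhat graph---are not auxiliary technicalities but the entire content of the theorem, so naming them does not constitute a proof. Moreover, the specific inductive scheme you propose (building $\mu$ up ``one fundamental weight at a time'' via the chains of Lemma~\ref{lem:ek-chain}) is problematic: a reduced $\mu$-chain is not in general a concatenation of reduced $\vpi_{i}$-chains, and the statistics $\height(A)$ and $\wt(A)$ are defined via the hyperplane levels $l_{k}$ of the \emph{global} alcove path from $A_{\circ}$ to $A_{-\mu}$, so they do not decompose additively under such a concatenation without further argument. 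Finally, the claim that the signs $(-1)^{n(A)}$ arise from ``an Euler-characteristic (cohomology-vanishing) argument of the type used in \cite{KNS} and \cite{NOS}'' is a guess rather than a derivation; in \cite{LNS,KLN} the formula is obtained from the quantum alcove model for graded characters (via the Ram--Yip-type formula and its refinements), not by a vanishing argument. None of this affects the present paper, which is entitled to quote the theorem, but your proposal should be presented as a summary of the cited proofs rather than as a proof.
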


\begin{rem}
Strictly speaking, Lenart-Naito-Sagaki proved an identity, called the Chevalley formula, in the equivariant $K$-group of semi-infinite flag manifolds, which is equivalent to \eqref{eq:Chevalley_LNS}. 
\end{rem}

Now, we consider the root system of type $C_{n}$, and apply the identity of Chevalley type above to the case that $\mu = -\vpi_{k-1} + \vpi_{k} = \ve_{k}$, $k = 1, \ldots, n$, to obtain the following.

\begin{prop} \label{prop:Chevalley_ek}
Let $2 \le k \le n$ and $\mu := \ve_{k} = -\vpi_{k-1} + \vpi_{k}$. Take an arbitrary reduced $\mu$-chain $\Gamma$. Let $w \in W$. For $\lambda \in P^{+}$ such that $\lambda + \mu \in P^{+}$, we have 
\begin{equation*}
\gch V_{w}^{-}(\lambda + \mu) = \frac{1}{1 - q^{-\pair{\lambda + \vpi_{k}}{\alpha_{k}^{\vee}}}} \sum_{A \in \CA(w, \Gamma)} (-1)^{n(A)} q^{-\height(A)}e^{\wt(A)} \gch V_{\ed(A)t_{\down(A)}}^{-}(\lambda). 
\end{equation*}
\end{prop}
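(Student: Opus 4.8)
The plan is to derive this proposition as a direct specialization of the general identity of Chevalley type in Theorem~\ref{thm:Chevalley}: I take $x = w$ (so that $\xi = 0$) and $\mu = \ve_{k} = -\vpi_{k-1} + \vpi_{k}$, and then collapse the sum over $\bPar(\mu)$ appearing in \eqref{eq:Chevalley_LNS} into a geometric series in $q^{-1}$. The reduced $\mu$-chain $\Gamma$ is arbitrary, which is legitimate precisely because the right-hand side of \eqref{eq:Chevalley_LNS} is independent of the chosen reduced chain.

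First I would read off the structure of $\bPar(\mu)$ for $\mu = -\vpi_{k-1} + \vpi_{k}$. Writing $\mu = \sum_{i \in I} m_{i}\vpi_{i}$, the hypothesis $2 \le k \le n$ guarantees $k-1 \in I$, and we have $m_{k-1} = -1$, $m_{k} = 1$, and $m_{i} = 0$ for all other $i$. Hence the constraint $\ell(\chi^{(i)}) \le \max\{m_{i}, 0\}$ forces $\chi^{(i)} = \emptyset$ for every $i \ne k$, while allowing $\chi^{(k)}$ to be either empty or a single-row partition. Thus $\bPar(\mu)$ is in bijection with $\BZ_{\ge 0}$ via the assignment $j := \chi_{1}^{(k)}$ (with $j = 0$ corresponding to the empty partition), and for the associated $\bchi$ one computes $|\bchi| = j$ and $\iota(\bchi) = j\,\alpha_{k}^{\vee}$.

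Next I would substitute these data into \eqref{eq:Chevalley_LNS}. The translation index becomes $\xi + \down(A) + \iota(\bchi) = \down(A) + j\,\alpha_{k}^{\vee}$, and the $|\bchi|$-term contributes $q^{-j}$. I would then invoke Proposition~\ref{prop:gch_translation} to peel off the translation $t_{j\alpha_{k}^{\vee}}$ from $\gch V_{\ed(A)t_{\down(A) + j\alpha_{k}^{\vee}}}^{-}(\lambda)$, producing a factor $q^{-j\pair{\lambda}{\alpha_{k}^{\vee}}}$ times $\gch V_{\ed(A)t_{\down(A)}}^{-}(\lambda)$. After interchanging the order of summation, all $j$-dependent factors collect into $\sum_{j \ge 0} q^{-j(1 + \pair{\lambda}{\alpha_{k}^{\vee}})}$, which converges in $\BC\pra{q^{-1}}$ since $\pair{\lambda}{\alpha_{k}^{\vee}} \ge 0$ for $\lambda \in P^{+}$.

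Finally I would sum the geometric series to $1/(1 - q^{-(1 + \pair{\lambda}{\alpha_{k}^{\vee}})})$ and rewrite the exponent using $\pair{\vpi_{k}}{\alpha_{k}^{\vee}} = 1$, so that $1 + \pair{\lambda}{\alpha_{k}^{\vee}} = \pair{\lambda + \vpi_{k}}{\alpha_{k}^{\vee}}$; this produces exactly the claimed denominator and the remaining sum over $\CA(w, \Gamma)$, completing the argument. I do not anticipate a serious obstacle: the entire content lies in correctly identifying the structure of $\bPar(\mu)$ (namely, that only the $k$-th component survives and can carry a single row) and in carefully bookkeeping the $q$-powers, so that the translation factor supplied by Proposition~\ref{prop:gch_translation} combines cleanly with the $q^{-|\bchi|}$ factor into a single geometric series.
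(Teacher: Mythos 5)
Your proposal is correct and follows essentially the same route as the paper's own proof: identify $\bPar(-\vpi_{k-1}+\vpi_{k})$ as the single-row partitions in position $k$, apply Theorem~\ref{thm:Chevalley}, peel off the translation $t_{j\alpha_{k}^{\vee}}$ via Proposition~\ref{prop:gch_translation}, and sum the resulting geometric series using $\pair{\lambda+\vpi_{k}}{\alpha_{k}^{\vee}} = \pair{\lambda}{\alpha_{k}^{\vee}}+1$. No discrepancies to report.
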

\begin{proof}
Since $\mu = -\vpi_{k-1} + \vpi_{k}$, we have 
\begin{equation*}
\bPar(\mu) = \{ \bi_{k} := (\emptyset, \ldots, \emptyset, \underset{k}{(i)}, \emptyset, \ldots, \emptyset) \mid i \ge 0 \}, 
\end{equation*}
where $\emptyset$ denotes the empty partition (of length $0$), which is also regard as $(0)$. 
For $i \ge 0$, we have $|\bi_{k}| = i$, and $\iota(\bi_{k}) = i \alpha_{k}^{\vee}$. 
Therefore, by Theorem~\ref{thm:Chevalley}, we compute:
\begin{align*}
\gch V_{w}^{-}(\lambda + \mu) &= \sum_{A \in \CA(w, \Gamma)} \sum_{\bchi \in \bPar(\mu)} (-1)^{n(A)} q^{-\height(A) - |\bchi|} e^{\wt(A)} \gch V_{\ed(A)t_{\down(A)+\iota(\bchi)}}^{-}(\lambda) \\ 
&= \sum_{A \in \CA(w, \Gamma)} \sum_{i = 0}^{\infty} (-1)^{n(A)} q^{-\height(A)-i} e^{\wt(A)} \gch V_{\ed(A)t_{\down(A)+i\alpha_{k}^{\vee}}}^{-}(\lambda) \\ 
&= \sum_{A \in \CA(w, \Gamma)} \sum_{i = 0}^{\infty} (-1)^{n(A)} q^{-\height(A)-i} e^{\wt(A)} q^{-\pair{\lambda}{i\alpha_{k}^{\vee}}} \gch V_{\ed(A)t_{\down(A)}}^{-}(\lambda) \\ 
&= \sum_{i = 0}^{\infty} q^{-i-\pair{\lambda}{i\alpha_{k}^{\vee}}} \sum_{A \in \CA(w, \Gamma)} (-1)^{n(A)} q^{-\height(A)} e^{\wt(A)} \gch V_{\ed(A)t_{\down(A)}}^{-}(\lambda) \\ 
&= \sum_{i = 0}^{\infty} q^{-i\pair{\lambda+\vpi_{k}}{\alpha_{k}^{\vee}}} \sum_{A \in \CA(w, \Gamma)} (-1)^{n(A)} q^{-\height(A)} e^{\wt(A)} \gch V_{\ed(A)t_{\down(A)}}^{-}(\lambda) \\ 
&= \frac{1}{1-q^{-\pair{\lambda+\vpi_{k}}{\alpha_{k}^{\vee}}}} \sum_{A \in \CA(w, \Gamma)} (-1)^{n(A)} q^{-\height(A)} e^{\wt(A)} \gch V_{\ed(A)t_{\down(A)}}^{-}(\lambda), 
\end{align*}
as desired; for the third equality, we used Proposition~\ref{prop:gch_translation}. 
This proves the proposition. 
\end{proof}

\section{Main Results} \label{sec:results}
In this section, we give precise statements of identities of inverse Chevalley type in type $C_{n}$. 
First, we give identities in which some terms may cancel. 
Next, we describe the cancellations in the ``first half'' of these identities to obtain cancellation-free ones.
Also, we make a conjecture for the cancellations in the ``second half'' of these identities. 
In the rest of this paper, we assume that $\mathfrak{g}$ is of type $C_{n}$. 

\subsection{Identities of inverse Chevalley type} \label{sec:identity_statement}
To give precise statements of our main results, we prepare additional notation. 
Let us define a total order $<$ on the set $[\overline{n}]$ by $1 < 2 < \cdots < n < \overline{n} < \overline{n-1} < \cdots < \overline{1}$. 
For $j, m \in [\overline{n}]$ with $j < m$, we define $\CS_{m, j}$ to be the set of all strictly decreasing sequences of integers starting at $m$ and ending at $j$, that is, 
\begin{equation*}
\CS_{m, j} := \{ (j_{1}, \ldots, j_{r}) \mid r \ge 1, \ j_{1}, \ldots, j_{r} \in [\overline{n}], \ m > j_{1} > \cdots > j_{r} = j \}. 
\end{equation*}
For $w \in W$ and $1 \le l < k \le n$, we set 
\begin{equation*}
\CA_{w}^{k, l} := \{ A \in \CA(w, \Theta_{k}) \setminus \{ \emptyset \} \mid \ed(A)^{-1}w\ve_{k} = \ve_{l} \}. 
\end{equation*}
Also, for $w \in W$, $k \in \{1, \ldots, n\}$, and $l < \overline{k}$, we set 
\begin{equation*}
\CA_{w}^{\overline{k}, l} := \{ A \in \CA(w, \Gamma_{k}(k)) \setminus \{ \emptyset \} \mid \ed(A)^{-1} w (-\ve_{k}) = \ve_{l} \}, 
\end{equation*}
where for simplicity, we write $\ve_{\overline{m}} = -\ve_{m}$ for $m \in \{1, \ldots, n\}$. 

The following is the ``first half'' of identities of inverse Chevalley type (in which cancellations occur). 

\begin{thm} \label{thm:IC_first-half}
For $w \in W$, $\lambda \in P^{+}$, $m = 1, \ldots, n$, and $\lambda \in P^{+}$ such that $\lambda + \ve_{k} \in P^{+}$ for all $k = 1, \ldots, m$, there holds the following identity: 
\begin{equation} \label{eq:IC_first-half}
\begin{split}
& e^{w\ve_{m}} \gch V_{w}^{-}(\lambda) \\ 
&= \sum_{B \in \CA(w, \Gamma_{m}(m))} (-1)^{|B|} \gch V_{\ed(B)t_{\down(B)}}^{-}(\lambda + \ve_{m}) \\ 
& \quad + \sum_{j = 1}^{m-1} \sum_{(j_{1}, \ldots, j_{r}) \in \CS_{m, j}} \sum_{A_{1} \in \CA_{w}^{m, j_{1}}} \cdots \sum_{A_{r} \in \CA_{\ed(A_{r-1})}^{j_{r-1}, j_{r}}} (-1)^{|A_{1}|+\cdots+|A_{r}|-r} q^{\pair{\ve_{j}}{\down(A_{1}, \ldots, A_{r})}} \\ 
& \quad \times \sum_{B \in \CA(\ed(A_{r}), \Gamma_{j}(j))} (-1)^{|B|} \gch V_{\ed(B)t_{\down(A_{1}, \ldots, A_{r}, B)}}^{-}(\lambda + \ve_{j}). 
\end{split}
\end{equation}
\end{thm}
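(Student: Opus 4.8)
The plan is to invert the special case of the Chevalley formula recorded in Proposition~\ref{prop:Chevalley_ek}. The key observation is that Proposition~\ref{prop:Chevalley_ek} expresses $\gch V_{w}^{-}(\lambda+\ve_{k})$ in terms of graded characters $\gch V_{\ed(A)t_{\down(A)}}^{-}(\lambda)$; by rearranging it, one obtains an expression for the product $e^{\wt(A)}\gch V_{\ed(A)t_{\down(A)}}^{-}(\lambda)$ in terms of $\gch V_{\cdot}^{-}(\lambda+\ve_{k})$, which is precisely an inverse Chevalley-type statement. First I would isolate the ``geometric series'' factor $1/(1-q^{-\pair{\lambda+\vpi_{k}}{\alpha_{k}^{\vee}}})$ and clear it, so that Proposition~\ref{prop:Chevalley_ek} becomes a genuinely \emph{finite} identity relating the two Demazure characters. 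The goal is to solve this relation for the single term $e^{w\ve_{m}}\gch V_{w}^{-}(\lambda)$, isolating the contribution of the \emph{empty} admissible subset $B=\emptyset$ (for which $\ed(B)=w$, $\down(B)=0$, $\wt(\emptyset)=w\ve_{m}$) and moving all nonempty-$B$ contributions to the other side.

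The main technical device I would use is the specific reduced $\ve_{m}$-chain $\Gamma_{m-1,m}^{\ast}=\Theta_{m}^{\ast}\ast\Gamma_{m}(m)$ supplied by Corollary~\ref{cor:-ek-chain}, which splits the $m$-chain into a $\Theta_{m}^{\ast}$ part and a $\Gamma_{m}(m)$ part. This splitting is what produces the two families of admissible subsets appearing in the statement: the ``$B$''-subsets, drawn from $\CA(\cdot,\Gamma_{m}(m))$, and the ``$A$''-subsets, drawn from the $\CA_{w}^{m,j}$ built on $\Theta_{m}$ (equivalently $\Theta_{m}^{\ast}$). After substituting this chain into the inverted Chevalley relation, I would reorganize the sum according to how the admissible path first drops the index $m$ down to some intermediate value $j_{1}$, then to $j_{2}$, and so on — this is exactly what the decreasing sequences $(j_{1},\ldots,j_{r})\in\CS_{m,j}$ encode. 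Concatenating admissible subsets over $\Theta$-chains then corresponds, via the definition of $\ed$ and $\down$ for tuples, to iterating the single-step inversion; I would make this iteration explicit by an induction on $m$ (or on the number of steps $r$), using at each stage that $\ed(A_{i})^{-1}w\ve_{m}=\ve_{j_{i}}$ so that the relevant weight drops from $\ve_{m}$-type to $\ve_{j}$-type.

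Throughout, I would track the three data $\ed$, $\down$, and the sign/power bookkeeping separately. The sign $(-1)^{|A_{1}|+\cdots+|A_{r}|-r}$ arises because each nonempty $A_{i}$ contributes a factor $(-1)^{|A_{i}|-1}$ after the single-step term $B=\emptyset$ is moved across the equation (the ``$-r$'' accounting for the $r$ isolated empty-subset terms), and the power $q^{\pair{\ve_{j}}{\down(A_{1},\ldots,A_{r})}}$ comes from repeatedly applying Proposition~\ref{prop:gch_translation} to absorb the accumulated translation $t_{\down(A_{1},\ldots,A_{r})}$ into the grading. I would verify that $n(A)$ and $\height(A)$ collapse to $|A|$ and to trivial contributions for the chains in question, since for these particular $\ve_{k}$-chains the labels $\gamma_{k}$ and the levels $l_{k}$ take controlled values; this is the step where the explicit form of $\Gamma_{m}(m)$ and $\Theta_{m}$ from Section~\ref{sec:preliminaries} is essential.

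The hard part will be controlling the iteration cleanly: the inverse of a single-step Chevalley relation produces a \emph{signed} sum, and when these are composed, one must ensure that the telescoping/bookkeeping of signs, $q$-powers, and translation elements is consistent across all decreasing index sequences simultaneously, and — crucially — that the resulting finite expression matches the stated grouping by $\CS_{m,j}$ with no stray terms. I expect the delicate point to be proving that the recursion terminates exactly at the ``diagonal'' index $j$ (so that the outermost factor is genuinely $\gch V_{\cdot}^{-}(\lambda+\ve_{j})$ with the $\Gamma_{j}(j)$-chain), and that the various hypotheses $\lambda+\ve_{k}\in P^{+}$ for all $k=1,\ldots,m$ are each used precisely to legitimize the corresponding application of Proposition~\ref{prop:Chevalley_ek} at intermediate stages; this is where I would expend most of the care, likely via an induction whose inductive step peels off the first drop $m\to j_{1}$ and invokes the statement for $\ed(A_{1})$ with $m$ replaced by $j_{1}$.
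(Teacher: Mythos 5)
Your overall architecture does match the paper's: a single-step inversion of Proposition~\ref{prop:Chevalley_ek} followed by an induction on $m$ that peels off the first drop $m \to j_{1}$ and re-expands $e^{w\ve_{m}} \gch V^{-}_{\ed(A_{1})}(\lambda) = e^{\ed(A_{1})\ve_{j_{1}}} \gch V^{-}_{\ed(A_{1})}(\lambda)$ via the induction hypothesis, with Proposition~\ref{prop:gch_translation} supplying the $q$-powers. But there is a genuine gap at the heart of the single-step inversion. Clearing the geometric factor and isolating the $A = \emptyset$ term of the Chevalley expansion of $\gch V_{w}^{-}(\lambda + \ve_{m})$ gives
\begin{equation*}
e^{w\ve_{m}} \gch V_{w}^{-}(\lambda) = (1 - q^{-\pair{\lambda+\vpi_{m}}{\alpha_{m}^{\vee}}}) \gch V_{w}^{-}(\lambda+\ve_{m}) - \sum_{A \neq \emptyset} (-1)^{n(A)} q^{-\height(A)} e^{\wt(A)} \gch V_{\ed(A)t_{\down(A)}}^{-}(\lambda),
\end{equation*}
which is a correct identity but not the one you need: the leading term of Theorem~\ref{thm:IC_first-half} is the signed sum $\sum_{B \in \CA(w, \Gamma_{m}(m))} (-1)^{|B|} \gch V_{\ed(B)t_{\down(B)}}^{-}(\lambda+\ve_{m})$ over \emph{all} admissible subsets of $\Gamma_{m}(m)$, not $(1-q^{-\cdots})\gch V_{w}^{-}(\lambda+\ve_{m})$, and the residual terms carry weights $e^{\ed(A^{(1)})\ve_{m}}$ attached to characters indexed by $\ed(A^{(2)})$, which do not match the $\CA(w,\Theta_{m})$-indexed terms your recursion requires. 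What is actually needed is the key identity (Proposition~\ref{prop:key_first-half})
\begin{equation*}
\sum_{B \in \CA(w, \Gamma_{k}(k))} (-1)^{|B|} \gch V_{\ed(B)t_{\down(B)}}^{-}(\lambda + \ve_{k}) = \sum_{A \in \CA(w, \Theta_{k})} (-1)^{|A|} e^{w\ve_{k}} \gch V_{\ed(A)t_{\down(A)}}^{-}(\lambda),
\end{equation*}
whose proof expands \emph{every} term $\gch V^{-}_{\ed(B)}(\lambda+\ve_{k})$ on the left by Proposition~\ref{prop:Chevalley_ek} along $\Gamma_{k-1,k} = \Gamma_{k}^{\ast}(k) \ast \Theta_{k}$ and then runs a sign-reversing involution on the pairs $(B, A^{(1)})$ with $B \in \CA(w, \Gamma_{k}(k))$ and $A^{(1)} \in \CA(\ed(B), \Gamma_{k}^{\ast}(k))$; everything cancels except the two pairs $B = A^{(1)} = \emptyset$ and $B = A^{(1)} = \{(k,k+1)\}$, whose difference produces the factor $1 - q^{-\pair{\lambda+\ve_{k}}{\alpha_{k}^{\vee}}}$ that exactly kills the geometric-series denominator. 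Your proposal contains no trace of this cancellation argument, which is the main content of the proof.

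Two smaller points. The chain you name, $\Gamma_{m-1,m}^{\ast} = \Theta_{m}^{\ast} \ast \Gamma_{m}(m)$ from Corollary~\ref{cor:-ek-chain}, is a $(\vpi_{m-1}-\vpi_{m}) = (-\ve_{m})$-chain; the chain needed to apply Proposition~\ref{prop:Chevalley_ek} with $\mu = \ve_{m}$ is $\Gamma_{m-1,m} = \Gamma_{m}^{\ast}(m) \ast \Theta_{m}$ from Lemma~\ref{lem:ek-chain}. Also, the statistics do not collapse to ``$|A|$ and trivial contributions'': one finds $n(A) = |A^{(2)}|$ (only the $\Theta_{k}$-part consists of negative roots) and $\height(A) = \pair{\ve_{k}}{\down(A^{(1)})}$ (only the $\Gamma_{k}^{\ast}(k)$-part contributes, by the hyperplane levels of Remark~\ref{rem:common_wall}), and this asymmetry is precisely what makes the signs and $q$-powers come out as stated.
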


We give a proof of Theorem~\ref{thm:IC_first-half} in Section~\ref{sec:proof_IC}. 
By Proposition~\ref{prop:gch_translation}, we obtain the following identities for an arbitrary $x \in W_{\af}$ (not only for $x \in W$). 
\begin{cor} \label{cor:IC_first-half_complete}
For $x = wt_{\xi} \in W_{\af}$ with $w \in W$ and $\xi \in Q^{\vee}$, $m = 1, \ldots, n$, and $\lambda \in P^{+}$ such that $\lambda + \ve_{k} \in P^{+}$ for all $k = 1, \ldots, m$, there holds the following identity: 
\begin{equation*}
\begin{split}
& e^{w\ve_{m}} \gch V_{x}^{-}(\lambda) \\ 
&= q^{\pair{\ve_{m}}{\xi}} \sum_{B \in \CA(w, \Gamma_{m}(m))} (-1)^{|B|} \gch V_{\ed(B)t_{\down(B)+\xi}}^{-}(\lambda + \ve_{m}) \\ 
& \quad + \sum_{j = 1}^{m-1} \sum_{(j_{1}, \ldots, j_{r}) \in \CS_{m, j}} \sum_{A_{1} \in \CA_{w}^{m, j_{1}}} \cdots \sum_{A_{r} \in \CA_{\ed(A_{r-1})}^{j_{r-1}, j_{r}}} (-1)^{|A_{1}|+\cdots+|A_{r}|-r} q^{\pair{\ve_{j}}{\down(A_{1}, \ldots, A_{r})+\xi}} \\ 
& \quad \times \sum_{B \in \CA(\ed(A_{r}), \Gamma_{j}(j))} (-1)^{|B|} \gch V_{\ed(B)t_{\down(A_{1}, \ldots, A_{r}, B)+\xi}}^{-}(\lambda + \ve_{j}), 
\end{split}
\end{equation*}
\end{cor}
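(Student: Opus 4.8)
The plan is to deduce Corollary~\ref{cor:IC_first-half_complete} from Theorem~\ref{thm:IC_first-half} by a direct translation argument using the translation-invariance of graded characters recorded in Proposition~\ref{prop:gch_translation}. Since Theorem~\ref{thm:IC_first-half} already establishes the identity for $x = w \in W$ (the case $\xi = 0$), the task is purely to track how each factor in \eqref{eq:IC_first-half} transforms when $V_w^-(\lambda)$ is replaced by $V_{wt_\xi}^-(\lambda)$ for general $\xi \in Q^\vee$.

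First I would apply Proposition~\ref{prop:gch_translation} to the left-hand side: since $x = wt_\xi$, we have $\gch V_x^-(\lambda) = \gch V_{wt_\xi}^-(\lambda) = q^{-\pair{\lambda}{\xi}} \gch V_w^-(\lambda)$, so that $e^{w\ve_m}\gch V_x^-(\lambda) = q^{-\pair{\lambda}{\xi}} e^{w\ve_m}\gch V_w^-(\lambda)$. Next I would substitute the right-hand side of \eqref{eq:IC_first-half} and push the translation $t_\xi$ through each Demazure character appearing there. The key observation is that for any $y \in W$ and any $\eta \in Q^\vee$, one has $\gch V_{yt_{\eta}t_\xi}^-(\nu) = \gch V_{yt_{\eta+\xi}}^-(\nu) = q^{-\pair{\nu}{\xi}}\gch V_{yt_\eta}^-(\nu)$, again by Proposition~\ref{prop:gch_translation}, using that $t_\eta t_\xi = t_{\eta+\xi}$ in $W_\af$. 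In the first (leading) term, $\nu = \lambda + \ve_m$, so multiplying by $q^{-\pair{\lambda}{\xi}}$ and inserting $t_\xi$ produces the overall factor $q^{-\pair{\lambda}{\xi}} q^{\pair{\lambda+\ve_m}{\xi}} = q^{\pair{\ve_m}{\xi}}$ together with the shift $\down(B) \mapsto \down(B)+\xi$ in the translation subscript, which matches exactly the stated corollary.

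For the double-sum term, the same computation applies with $\nu = \lambda + \ve_j$: the translation replaces $t_{\down(A_1,\ldots,A_r,B)}$ by $t_{\down(A_1,\ldots,A_r,B)+\xi}$ and contributes a factor $q^{\pair{\lambda+\ve_j}{\xi}}$, so the prefactor $q^{-\pair{\lambda}{\xi}}$ from the left-hand side combines with the existing $q^{\pair{\ve_j}{\down(A_1,\ldots,A_r)}}$ to give $q^{\pair{\ve_j}{\down(A_1,\ldots,A_r)+\xi}}$, precisely as in the corollary. The signs $(-1)^{|B|}$ and $(-1)^{|A_1|+\cdots+|A_r|-r}$, as well as the index sets $\CS_{m,j}$, $\CA_w^{m,j_1}$, and the admissible-subset sums, are untouched by the translation since the quantum Bruhat graph combinatorics depend only on the finite Weyl group part $w$, not on $\xi$.

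The argument is essentially bookkeeping, so there is no serious obstacle; the one point that requires care is verifying that the cancellation of the $q^{-\pair{\lambda}{\xi}}$ prefactor against the $q^{\pair{\nu}{\xi}}$ factors is uniform across \emph{all} terms, i.e. that both the leading term (with $\nu = \lambda+\ve_m$) and every summand of the double sum (with $\nu = \lambda+\ve_j$) produce exponents independent of $\lambda$. This holds precisely because in each term the weight of the target Demazure module is $\lambda$ plus an $\ve$-shift, so the $\lambda$-dependence always cancels, leaving $q^{\pair{\ve_m}{\xi}}$ and $q^{\pair{\ve_j}{\cdots+\xi}}$ respectively. I would conclude by noting that this $\lambda$-independence of the coefficients is exactly the finiteness/independence property asserted for the $d_{x,\nu}^{y,\mu}$ in the introduction.
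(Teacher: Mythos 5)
Your proposal is correct and is exactly the paper's route: the paper derives Corollary~\ref{cor:IC_first-half_complete} from Theorem~\ref{thm:IC_first-half} by invoking Proposition~\ref{prop:gch_translation}, and your bookkeeping of the cancellation $q^{-\pair{\lambda}{\xi}}q^{\pair{\lambda+\ve_j}{\xi}} = q^{\pair{\ve_j}{\xi}}$ together with the shift $t_{\down(\cdot)} \mapsto t_{\down(\cdot)+\xi}$ supplies precisely the details the paper leaves implicit.
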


The following theorem is the ``second half'' of identities of inverse Chevalley type. 
\begin{thm} \label{thm:IC_second-half}
For $w \in W$, $m = 1, \ldots, n$, and $\lambda \in P^{+}$ such that $\lambda + \ve_{k} \in P^{+}$ for all $k = 1, \ldots, n$ and $\lambda - \ve_{k} \in P^{+}$ for $k = m+1, \ldots, n$, there holds the following identity: 
\begin{equation} \label{eq:IC_second-half}
\begin{split}
& e^{-w\ve_{m}} \gch V_{w}^{-}(\lambda) \\ 
&= \sum_{B \in \CA(w, \Theta_{m})} (-1)^{|B|} \gch V_{\ed(B)t_{\down(B)}}^{-}(\lambda - \ve_{m}) \\ 
& \quad + \sum_{j = m+1}^{n} \sum_{(j_{1}, \ldots, j_{r}) \in \CS_{\overline{m}, \overline{j}}} \sum_{A_{1} \in \CA_{w}^{\overline{m}, j_{1}}} \cdots \sum_{A_{r} \in \CA_{\ed(A_{r-1})}^{j_{r-1}, j_{r}}} (-1)^{|A_{1}|+\cdots+|A_{r}|-r} q^{-\pair{\ve_{j}}{\down(A_{1}, \ldots, A_{r})}} \\ 
& \quad \times \sum_{B \in \CA(\ed(A_{r}), \Theta_{j})} (-1)^{|B|} \gch V_{\ed(B)t_{\down(A_{1}, \ldots, A_{r}, B)}}^{-}(\lambda - \ve_{j}) \\ 
& \quad + \sum_{j = 1}^{n} \sum_{(j_{1}, \ldots, j_{r}) \in \CS_{\overline{m}, j}} \sum_{A_{1} \in \CA_{w}^{\overline{m}, j_{1}}} \cdots \sum_{A_{r} \in \CA_{\ed(A_{r-1})}^{j_{r-1}, j_{r}}} (-1)^{|A_{1}|+\cdots+|A_{r}|-r} q^{\pair{\ve_{j}}{\down(A_{1}, \ldots, A_{r})}} \\ 
& \quad \times \sum_{B \in \CA(\ed(A_{r}), \Gamma_{j}(j))} (-1)^{|B|} \gch V_{\ed(B)t_{\down(A_{1}, \ldots, A_{r}, B)}}^{-}(\lambda + \ve_{j}). 
\end{split}
\end{equation}
\end{thm}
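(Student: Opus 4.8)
The plan is to establish Theorem~\ref{thm:IC_second-half} by a simultaneous induction, together with its first-half counterpart Theorem~\ref{thm:IC_first-half}, on the position of the relevant index in the total order $1 < \cdots < n < \overline{n} < \cdots < \overline{1}$ on $[\overline{n}]$. The driving mechanism is that the character is preserved and merely relabeled: under an admissible subset $A$ with $\ed(A)^{-1}w(-\ve_m) = \ve_l$ (convention $\ve_{\overline{l'}} = -\ve_{l'}$) one has $e^{-w\ve_m} = e^{\ed(A)\ve_l}$, so that a landing on an unbarred $l$ yields a positive character, to be treated by the first-half identity, while a landing on a barred $\overline{l'}$ yields a negative character, to be treated again by the second-half identity. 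In both cases $l$ lies strictly below $\overline{m}$, which is exactly what makes the induction well-founded and the resulting sum finite.

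The engine of the induction is a single-step identity (the auxiliary Proposition~\ref{prop:key_second-half}), which I would extract from the minuscule specialization of the Chevalley formula. Concretely, I would apply Proposition~\ref{prop:Chevalley_ek} in its $-\ve_m$ form (the analogue proved identically, with the geometric series now over $\bPar(-\ve_m)$), using the reduced $(-\ve_m)$-chain $\Gamma_{m-1,m}^* = \Theta_m^* \ast \Gamma_m(m)$ of Corollary~\ref{cor:-ek-chain}, to write $(1 - q^{-c'})\gch V_w^-(\lambda - \ve_m)$, where $c' = \pair{\lambda + \vpi_{m-1}}{\alpha_{m-1}^\vee}$, as a finite sum of level-$\lambda$ characters $\gch V_{\ed(A)t_{\down(A)}}^-(\lambda)$; since the empty subset carries $\wt(\emptyset) = -w\ve_m$, it contributes precisely $e^{-w\ve_m}\gch V_w^-(\lambda)$, for which I then solve. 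The goal is to reorganize the remaining terms---via the interplay of $\wt$, $\height$, $\down$ and Proposition~\ref{prop:gch_translation}---into the leading block $\sum_{B \in \CA(w,\Theta_m)}(-1)^{|B|}\gch V_{\ed(B)t_{\down(B)}}^-(\lambda - \ve_m)$ together with transition terms $(-1)^{|A|-1}e^{\ed(A)\ve_l}\gch V_{\ed(A)t_{\down(A)}}^-(\lambda)$, indexed by the families $\CA_w^{\overline{m},l} \subseteq \CA(w,\Gamma_m(m))$ and grouped by the landing value $\ve_l$.

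Granting this single-step identity, the inductive step is bookkeeping governed by the explicit Corollaries. To each transition term, of the form $e^{\ed(A)\ve_l}\gch V_{\ed(A)t_{\down(A)}}^-(\lambda)$, I apply the inductive hypothesis at the strictly lower position $l$, with $w$ replaced by $\ed(A)$ and $\xi = \down(A)$: Corollary~\ref{cor:IC_first-half_complete} if $l$ is unbarred, Corollary~\ref{cor:IC_second-half_complete} if $l$ is barred. The $\xi$-dependent prefactor supplied by the Corollary generates the accumulated $q$-power $q^{\pm\pair{\ve_j}{\down(A_1,\dots,A_r)}}$ (positive for an unbarred endpoint $j$, negative for a barred one); the shifted translations $t_{\down(\cdots)+\xi}$ amalgamate into $t_{\down(A_1,\dots,A_r,B)}$; and the outer sign $(-1)^{|A|-1}$ combines with the Corollary's sign to give $(-1)^{|A_1|+\cdots+|A_r|-r}$. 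Concatenating the first transition $\overline{m} \to l$ with the sequences furnished by the hypothesis builds exactly the strictly decreasing sequences in $\CS_{\overline{m},\overline{j}}$ (remaining barred, ending in a $\Theta_j$-block at weight $\lambda - \ve_j$) and in $\CS_{\overline{m},j}$ (passing to unbarred, ending in a $\Gamma_j(j)$-block at weight $\lambda + \ve_j$), so that the three blocks of \eqref{eq:IC_second-half} appear, the empty sequence giving the leading $\Theta_m$-block. The minimal positions, where no transition is available, are the base cases, and Corollary~\ref{cor:IC_second-half_complete} then follows from Theorem~\ref{thm:IC_second-half} at once by Proposition~\ref{prop:gch_translation}.

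The main obstacle I anticipate is the single-step identity, and specifically its passage to a \emph{finite}, denominator-free form. The raw expansion carries the geometric-series factor $1/(1 - q^{-c'})$, and the serious point---exactly the feature that obstructs the simply-laced argument of Orr~\cite{O} in non-simply-laced types---is to show that this denominator cancels, so that the leading contribution collapses to an honest finite sum over $\CA(w,\Theta_m)$ with signs $(-1)^{|B|}$ and no residual $q$-power, while the variable weights $e^{\wt(A)}$ of the nonempty subsets reorganize, through Proposition~\ref{prop:gch_translation} and the wall data of Remark~\ref{rem:common_wall}, into the preserved characters $e^{\ed(A)\ve_l}$. The remaining difficulty is purely combinatorial: verifying that the interleaved first- and second-half single-step identities splice together across the induction with no unmatched or doubly counted terms, for which the separation of each chain into its $\Theta$- and $\Gamma(\cdot)$-blocks, together with the reversal symmetry between $\Theta_m$ and $\Theta_m^*$ (and between $\Gamma_m(m)$ and $\Gamma_m^*(m)$), is the organizing tool.
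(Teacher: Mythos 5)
Your inductive architecture coincides with the paper's: the paper first establishes the first-half identities for all $m$ (Theorem~\ref{thm:IC_first-half} and Corollary~\ref{cor:IC_first-half_complete}), then proves Theorem~\ref{thm:IC_second-half} by downward induction on $m = n, n-1, \ldots, 1$, splitting the nonempty $A \in \CA(w, \Gamma_m(m))$ according to the landing value $\ed(A)^{-1}w(-\ve_m) = \ve_k$ and feeding unbarred landings into Corollary~\ref{cor:IC_first-half_complete} and barred landings $\overline{k}$ with $k > m$ into the inductive hypothesis. Your bookkeeping of the relabeling $e^{-w\ve_m} = e^{\ed(A)\ve_l}$, of the accumulated $q$-powers via $\xi = \down(A)$, and of the concatenation of sequences in $\CS_{\overline{m},\overline{j}}$ and $\CS_{\overline{m},j}$ is consistent with what the paper does.

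The genuine gap is at the single-step identity (Proposition~\ref{prop:key_second-half}), which you correctly identify as the crux but leave unproven, and for which your proposed route is much harder than necessary. You plan to expand $\gch V_w^-(\lambda - \ve_m)$ by the $(-\ve_m)$-analogue of Proposition~\ref{prop:Chevalley_ek} along the chain $\Gamma^*_{m-1,m} = \Theta_m^* \ast \Gamma_m(m)$, isolate the empty-subset term, and then defeat the denominator $1/(1 - q^{-\pair{\lambda + \vpi_{m-1}}{\alpha_{m-1}^\vee}})$ by a cancellation you explicitly defer as ``the main obstacle.'' Carrying that out would amount to repeating the entire involution argument of Proposition~\ref{prop:key_first-half} a second time, and you do not supply it. The paper avoids all of this: Proposition~\ref{prop:key_second-half} follows from Proposition~\ref{prop:key_first-half} in one line, by replacing $\lambda$ with $\lambda - \ve_k$ in \eqref{eq:identity_C-IC} and multiplying both sides by $e^{-w\ve_k}$, after which the two sides exchange roles and the second-half key identity appears with no new Chevalley expansion, no new involution, and no denominator to cancel. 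Until you either make this observation or actually execute the cancellation you postpone, your proof is incomplete at its central step.
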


We give a proof of Theorem~\ref{thm:IC_second-half} in Section~\ref{sec:proof_IC}. Again, by Proposition~\ref{prop:gch_translation}, we obtain the following identities for an arbitrary $x \in W_{\af}$ (not only for $x \in W$). 
\begin{cor} \label{cor:IC_second-half_complete}
For $x = wt_{\xi} \in W_{\af}$ with $w \in W$ and $\xi \in Q^{\vee}$, $m = 1, \ldots, n$, and $\lambda \in P^{+}$ such that $\lambda + \ve_{k} \in P^{+}$ for all $k = 1, \ldots, n$ and $\lambda - \ve_{k} \in P^{+}$ for $k = m+1, \ldots, n$, there holds the following identity: 
\begin{equation*}
\begin{split}
& e^{-w\ve_{m}} \gch V_{x}^{-}(\lambda) \\ 
&= q^{-\pair{\ve_{m}}{\xi}} \sum_{B \in \CA(w, \Theta_{m})} (-1)^{|B|} \gch V_{\ed(B)t_{\down(B)+\xi}}^{-}(\lambda - \ve_{m}) \\ 
& \quad + \sum_{j = m+1}^{n} \sum_{(j_{1}, \ldots, j_{r}) \in \CS_{\overline{m}, \overline{j}}} \sum_{A_{1} \in \CA_{w}^{\overline{m}, j_{1}}} \cdots \sum_{A_{r} \in \CA_{\ed(A_{r-1})}^{j_{r-1}, j_{r}}} (-1)^{|A_{1}|+\cdots+|A_{r}|-r} q^{-\pair{\ve_{j}}{\down(A_{1}, \ldots, A_{r})+\xi}} \\ 
& \quad \times \sum_{B \in \CA(\ed(A_{r}), \Theta_{j})} (-1)^{|B|} \gch V_{\ed(B)t_{\down(A_{1}, \ldots, A_{r}, B)+\xi}}^{-}(\lambda - \ve_{j}) \\ 
& \quad + \sum_{j = 1}^{n} \sum_{(j_{1}, \ldots, j_{r}) \in \CS_{\overline{m}, j}} \sum_{A_{1} \in \CA_{w}^{\overline{m}, j_{1}}} \cdots \sum_{A_{r} \in \CA_{\ed(A_{r-1})}^{j_{r-1}, j_{r}}} (-1)^{|A_{1}|+\cdots+|A_{r}|-r} q^{\pair{\ve_{j}}{\down(A_{1}, \ldots, A_{r})+\xi}} \\ 
& \quad \times \sum_{B \in \CA(\ed(A_{r}), \Gamma_{j}(j))} (-1)^{|B|} \gch V_{\ed(B)t_{\down(A_{1}, \ldots, A_{r}, B)+\xi}}^{-}(\lambda + \ve_{j}). 
\end{split}
\end{equation*}
\end{cor}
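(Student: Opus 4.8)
The plan is to deduce this corollary from Theorem~\ref{thm:IC_second-half} by nothing more than the translation formula of Proposition~\ref{prop:gch_translation}. The crucial observation is that $x = wt_{\xi}$ and $w$ have the same finite part $w$; consequently the one-dimensional character $e^{-w\ve_{m}}$ on the left-hand side is literally unchanged when we pass from $w$ to $x$, and the entire effect of the translation $t_{\xi}$ will be absorbed into powers of $q$ and into shifts of the translation subscripts on the right.

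First I would treat the left-hand side. Applying Proposition~\ref{prop:gch_translation} with $x \mapsto w$ gives $\gch V_{x}^{-}(\lambda) = \gch V_{wt_{\xi}}^{-}(\lambda) = q^{-\pair{\lambda}{\xi}} \gch V_{w}^{-}(\lambda)$, so that
\[
e^{-w\ve_{m}} \gch V_{x}^{-}(\lambda) = q^{-\pair{\lambda}{\xi}} \, e^{-w\ve_{m}} \gch V_{w}^{-}(\lambda).
\]
I would then substitute the right-hand side of \eqref{eq:IC_second-half} for $e^{-w\ve_{m}} \gch V_{w}^{-}(\lambda)$, reducing the problem to transporting the overall factor $q^{-\pair{\lambda}{\xi}}$ into each individual summand.

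Next I would process the summands uniformly. Every term of \eqref{eq:IC_second-half} has the shape $\gch V_{y t_{\eta}}^{-}(\lambda \pm \ve_{j})$ with $y = \ed(B)$ and $\eta = \down(A_{1}, \ldots, A_{r}, B)$ (or $y = \ed(B)$, $\eta = \down(B)$, $j = m$ in the leading sum). Since translation elements commute, $y t_{\eta} t_{\xi} = y t_{\eta + \xi}$, and Proposition~\ref{prop:gch_translation} yields
\[
q^{-\pair{\lambda}{\xi}} \gch V_{y t_{\eta}}^{-}(\lambda \pm \ve_{j}) = q^{-\pair{\lambda}{\xi}} q^{\pair{\lambda \pm \ve_{j}}{\xi}} \gch V_{y t_{\eta + \xi}}^{-}(\lambda \pm \ve_{j}) = q^{\pm \pair{\ve_{j}}{\xi}} \gch V_{y t_{\eta + \xi}}^{-}(\lambda \pm \ve_{j}).
\]
Thus shifting the subscript by $\xi$ costs precisely the factor $q^{\pm \pair{\ve_{j}}{\xi}}$, with the sign dictated by whether the weight is $\lambda + \ve_{j}$ or $\lambda - \ve_{j}$.

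Finally I would collect exponents. In the first sum (with $j = m$, $\eta = \down(B)$) this produces the leading coefficient $q^{-\pair{\ve_{m}}{\xi}}$; in the remaining sums, combining the newly produced $q^{\pm \pair{\ve_{j}}{\xi}}$ with the pre-existing factor $q^{\pm \pair{\ve_{j}}{\down(A_{1}, \ldots, A_{r})}}$ gives exactly $q^{\pm \pair{\ve_{j}}{\down(A_{1}, \ldots, A_{r}) + \xi}}$ and turns each subscript $\down(A_{1}, \ldots, A_{r}, B)$ into $\down(A_{1}, \ldots, A_{r}, B) + \xi$, matching the statement. I do not anticipate any genuine obstacle: the argument is mechanical, and the only point demanding care is to verify that the $-\pair{\lambda}{\xi}$ coming from the left-hand side cancels term-by-term against the $\pair{\lambda \pm \ve_{j}}{\xi}$ generated on the right, leaving only the $\pm\pair{\ve_{j}}{\xi}$ contributions; the structure of the three sums (governed by $\CS_{\overline{m},\overline{j}}$, $\CS_{\overline{m},j}$ and the chains $\Theta_{j}$, $\Gamma_{j}(j)$) is carried along unchanged throughout.
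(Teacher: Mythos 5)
Your argument is correct and is exactly the paper's (unwritten) derivation: the corollary is stated as an immediate consequence of Theorem~\ref{thm:IC_second-half} via Proposition~\ref{prop:gch_translation}, and your term-by-term bookkeeping of the factors $q^{-\pair{\lambda}{\xi}}$ and $q^{\pair{\lambda\pm\ve_{j}}{\xi}}$ is precisely how the exponents $q^{\pm\pair{\ve_{j}}{\cdots+\xi}}$ and the shifted subscripts arise. No issues.
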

Here we should mention that all the sums on the right-hand side of Theorems~\ref{thm:IC_first-half} and \ref{thm:IC_second-half}, together with Corollaries~\ref{cor:IC_first-half_complete} and \ref{cor:IC_second-half_complete}, are indeed finite sums.

\subsection{Cancellation-free identities of inverse Chevalley type in the first-half case} \label{sec:cancellation-free_statement}

We consider cancellations of terms in the first-half identities of inverse Chevalley type. Let $w \in W$. Take $l, m \in \{1, \ldots, n\}$ such that $l > m$. 
We define a directed path $\bp_{l, m}(w)$ in $\QBG(W)$ inductively as follows: 
\begin{enu}
\item if $l - m = 1$, then $\bp_{l, m}(w): w \xrightarrow{(l-1, l)} ws_{l-1}$; 
\item if $l - m > 1$, then assume that $\bp_{l', m'}(v)$ is defined for $v \in W$ and $l', m' \in \{1, \ldots, n\}$ such that $0 < l' - m' < l - m$. 
Take minimal $k \in \{m, \ldots, l-1\}$ such that $w \xrightarrow{(k, l)} ws_{(k, l)}$. Then, since $k - m < l - m$, $\bp_{k, m}(ws_{(k, l)})$ is defined. 
Let $\bp_{l, m}(w)$ be the directed path obtained as the concatenation of the edge $w \xrightarrow{(k, l)} ws_{(k, l)}$ with the directed path $\bp_{k, m}(ws_{(k, l)})$. 
\end{enu}

The following theorem gives the cancellation-free identities of inverse Chevalley type in the first-half case. 
\begin{thm} \label{thm:IC_cancellation-free_first-half}
For $w \in W$, $m = 1, \ldots, n$, and $\lambda \in P^{+}$ such that $\lambda + \ve_{k} \in P^{+}$ for all $k = 1, \ldots, m$, there holds the following cancellation-free identity: 
\begin{equation*}
\begin{split}
& e^{w\ve_{m}} \gch V_{w}^{-}(\lambda) \\ 
&= \sum_{B \in \CA(w, \Gamma_{m}(m))} (-1)^{|B|} \gch V_{\ed(B)t_{\down(B)}}^{-}(\lambda + \ve_{m}) \\ 
& \quad + \sum_{j = 1}^{m-1} q^{\pair{\ve_{j}}{\wt(\bp_{m, j}(w))}} \sum_{\CA(\ed(\bp_{m, j}(w)), \Gamma_{j}(j))} (-1)^{|B|} \gch V_{\ed(B)t_{\down(B) + \wt(\bp_{m, j}(w))}}^{-}(\lambda + \ve_{j}). 
\end{split}
\end{equation*}
\end{thm}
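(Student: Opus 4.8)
The plan is to prove the cancellation-free identity (Theorem~\ref{thm:IC_cancellation-free_first-half}) by starting from the identity with cancellations, namely Theorem~\ref{thm:IC_first-half}, and showing that the massive inner sum over strictly decreasing sequences $(j_1,\ldots,j_r) \in \CS_{m,j}$ and admissible subsets $A_1 \in \CA_w^{m,j_1}, \ldots, A_r \in \CA_{\ed(A_{r-1})}^{j_{r-1},j_r}$ collapses, after cancellation, to the single distinguished directed path $\bp_{m,j}(w)$. More precisely, for each fixed target index $j \in \{1,\ldots,m-1\}$, I would compare the coefficient
\begin{equation*}
\sum_{(j_1,\ldots,j_r) \in \CS_{m,j}} \ \sum_{A_1 \in \CA_w^{m,j_1}} \cdots \sum_{A_r \in \CA_{\ed(A_{r-1})}^{j_{r-1},j_r}} (-1)^{|A_1|+\cdots+|A_r|-r}\, q^{\pair{\ve_j}{\down(A_1,\ldots,A_r)}}\, \bigl[\, \cdots\, \bigr]
\end{equation*}
multiplying $\gch V^-(\lambda+\ve_j)$ with the corresponding single-path term $q^{\pair{\ve_j}{\wt(\bp_{m,j}(w))}} \sum_{B \in \CA(\ed(\bp_{m,j}(w)),\Gamma_j(j))} (-1)^{|B|}\,\bigl[\,\cdots\,\bigr]$. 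The key claim is that almost all contributions in the left expression cancel in pairs, leaving exactly the contribution of $\bp_{m,j}(w)$.

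The heart of the argument is a sign-reversing involution (or an inductive cancellation scheme) on the set of chains of admissible subsets indexed by $\CS_{m,j}$. The natural strategy is to induct on $m-j$, mirroring the recursive definition of $\bp_{m,j}(w)$: since $\bp_{m,j}(w)$ is built by first choosing the \emph{minimal} $k \in \{m,\ldots,l-1\}$ giving a quantum-Bruhat edge $w \xrightarrow{(k,m)} ws_{(k,m)}$ (with $l$ replaced by $m$ here) and then concatenating with $\bp_{k,j}(ws_{(k,m)})$, I would isolate the first step of each chain and show that all choices of first admissible subset $A_1 \in \CA_w^{m,j_1}$ other than the single edge corresponding to the minimal $k$ cancel against one another. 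Concretely, I would analyze the structure of $\CA_w^{m,j_1} = \{A \in \CA(w,\Theta_m) \setminus \{\emptyset\} \mid \ed(A)^{-1}w\ve_m = \ve_{j_1}\}$ using the explicit form of the chain $\Theta_m = (-(1,m),\ldots,-(m-1,m))$, and partition these subsets so that the weights $\down(A)$ and the signs $(-1)^{|A|-1}$ conspire to annihilate all but the quantum-edge-of-minimal-index contribution. This cancellation at the first step reduces the length of the sequence and the gap $m-j$, allowing the inductive hypothesis to be applied to the remaining chain $A_2,\ldots,A_r$ rooted at $\ed(A_1)$.

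The main obstacle, I expect, will be establishing the first-step cancellation cleanly, because it requires understanding precisely which subsets $A \in \CA(w,\Theta_m)$ satisfy $\ed(A)^{-1}w\ve_m = \ve_{j_1}$ and how their down-weights $\down(A) = \sum_{k \in A^-}|\gamma_k|^\vee$ and cardinalities distribute. One must show that the alternating sum $\sum_{A}(-1)^{|A|-1} q^{\pair{\ve_j}{\down(A)}}(\cdots)$ over these subsets telescopes to the minimal-$k$ quantum edge; this likely hinges on a combinatorial property of the quantum Bruhat graph of type $C$ restricted to the relevant root strings, perhaps a variant of the diamond/shelling lemmas of Brenti--Fomin--Postnikov or a pairing that swaps adjacent roots in $\Theta_m$. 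A secondary difficulty is bookkeeping the translation parts $t_{\down(A_1,\ldots,A_r,B)}$ and the powers of $q$ so that the surviving terms reassemble exactly into $q^{\pair{\ve_j}{\wt(\bp_{m,j}(w))}}$ and $\down(B) + \wt(\bp_{m,j}(w))$, using the fact that $\wt(\bp_{m,j}(w))$ accumulates precisely the coroots of the quantum edges along the path.

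Once the first-step cancellation and the inductive reduction are in place, I would verify the base case $m-j=1$ directly from the definition $\bp_{m,m-1}(w): w \xrightarrow{(m-1,m)} ws_{m-1}$, matching it against the $r=1$ term with the singleton sequence $(j_1)=(m-1)$, and then conclude by assembling the per-$j$ identities back into the full sum over $j=1,\ldots,m-1$. The leading term $\sum_{B \in \CA(w,\Gamma_m(m))}(-1)^{|B|}\gch V^-_{\ed(B)t_{\down(B)}}(\lambda+\ve_m)$ is common to both Theorem~\ref{thm:IC_first-half} and Theorem~\ref{thm:IC_cancellation-free_first-half}, so it requires no modification and simply carries over unchanged.
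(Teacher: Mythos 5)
Your plan matches the paper's proof essentially step for step: the paper also fixes $j$, inducts on $m-j$ with the base case $m-j=1$ handled by $\CS_{m,m-1}=\{(m-1)\}$, describes $\CA_{w}^{m,j_{1}}$ explicitly in terms of the set $\{a_{1}<\cdots<a_{s}\}$ of indices $k$ with $w \xrightarrow{(k,m)} ws_{(k,m)}$, applies the induction hypothesis to collapse the tail $(A_{2},\ldots,A_{r})$ into $\bp_{j_{1},j}(\ed(A_{1}))$, and then kills everything except the minimal quantum Bruhat edge $\{a_{1}\}$ by the sign-reversing involution that toggles membership of $a_{1}$ in the first admissible subset, using the fact that all shortest directed paths in $\QBG(W)$ between two fixed elements have the same weight to match the $q$-powers. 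The one point to watch in executing your sketch is the order of operations: the induction hypothesis must be applied \emph{before} the first-step involution, because toggling $a_{1}$ changes $\ed(A_{1})$ and hence the collection of admissible tails rooted there, so the pairing only cancels terms once the tails have already been replaced by the canonical paths $\bp_{a_{k},j}(\cdot)$ (whose first edge is identified by the paper's Corollary~\ref{cor:IC_cancellation_minimum}).
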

We give a proof of this theorem in Section~\ref{sec:cancellation-free_proof}. 
Again, by using Proposition~\ref{prop:gch_translation}, we obtain the following cancellation-free identities for an arbitrary $x \in W_{\af}$ (not only for $x \in W$). 
\begin{cor} \label{cor:IC_cancellation-free_first-half_complete}
For $x = wt_{\xi} \in W_{\af}$ with $w \in W$ and $\xi \in Q^{\vee}$, $m = 1, \ldots, n$, and $\lambda \in P^{+}$ such that $\lambda + \ve_{k} \in P^{+}$ for all $k = 1, \ldots, m$, there holds the following cancellation-free identity: 
\begin{equation*}
\begin{split}
& e^{w\ve_{m}} \gch V_{x}^{-}(\lambda) \\ 
&= q^{\pair{\ve_{m}}{\xi}} \sum_{B \in \CA(w, \Gamma_{m}(m))} (-1)^{|B|} \gch V_{\ed(B)t_{\down(B)+\xi}}^{-}(\lambda + \ve_{m}) \\ 
& \quad + \sum_{j = 1}^{m-1} q^{\pair{\ve_{j}}{\wt(\bp_{m, j}(w))+\xi}} \sum_{\CA(\ed(\bp_{m, j}(w)), \Gamma_{j}(j))} (-1)^{|B|} \gch V_{\ed(B)t_{\down(B) + \wt(\bp_{m, j}(w)) + \xi}}^{-}(\lambda + \ve_{j}). 
\end{split}
\end{equation*}
\end{cor}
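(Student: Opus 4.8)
The plan is to deduce Corollary~\ref{cor:IC_cancellation-free_first-half_complete} from Theorem~\ref{thm:IC_cancellation-free_first-half} by exactly the same mechanism already used to pass from Theorem~\ref{thm:IC_first-half} to Corollary~\ref{cor:IC_first-half_complete}, namely an application of Proposition~\ref{prop:gch_translation}. Write $x = wt_{\xi}$ with $w \in W$ and $\xi \in Q^{\vee}$. The starting point is the observation that the element $x$ differs from $w$ only by the translation $t_{\xi}$, and all the combinatorial data appearing on the right-hand side of Theorem~\ref{thm:IC_cancellation-free_first-half} --- the admissible subsets $B \in \CA(w,\Gamma_{m}(m))$, the paths $\bp_{m,j}(w)$, and the statistics $\wt$, $\down$, $\ed$ --- depend on $w$ alone and are therefore unaffected by the presence of $t_{\xi}$.

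First I would rewrite the left-hand side. Since $V_{x}^{-}(\lambda) = V_{wt_{\xi}}^{-}(\lambda)$, Proposition~\ref{prop:gch_translation} gives $\gch V_{wt_{\xi}}^{-}(\lambda) = q^{-\pair{\lambda}{\xi}} \gch V_{w}^{-}(\lambda)$, so that
\begin{equation*}
e^{w\ve_{m}} \gch V_{x}^{-}(\lambda) = q^{-\pair{\lambda}{\xi}} \, e^{w\ve_{m}} \gch V_{w}^{-}(\lambda).
\end{equation*}
Next I would substitute the identity of Theorem~\ref{thm:IC_cancellation-free_first-half} for the factor $e^{w\ve_{m}} \gch V_{w}^{-}(\lambda)$, obtaining an expansion whose terms are $q^{-\pair{\lambda}{\xi}}$ times graded characters of the form $\gch V_{\ed(B)t_{\eta}}^{-}(\lambda+\ve_{k})$, where $\eta$ is either $\down(B)$ (in the leading sum) or $\down(B)+\wt(\bp_{m,j}(w))$ (in the remaining sums), for $k \in \{m\} \cup \{1,\dots,m-1\}$.

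The key step is then to reincorporate the translation $t_{\xi}$ into each term. For a fixed summand I apply Proposition~\ref{prop:gch_translation} in the reverse direction: for any $\ed(B) \in W$ and any coweight $\eta \in Q^{\vee}$, one has
\begin{equation*}
\gch V_{\ed(B)t_{\eta+\xi}}^{-}(\lambda+\ve_{k}) = q^{-\pair{\lambda+\ve_{k}}{\eta+\xi}} \gch V_{\ed(B)}^{-}(\lambda+\ve_{k}),
\end{equation*}
and comparing with $\gch V_{\ed(B)t_{\eta}}^{-}(\lambda+\ve_{k}) = q^{-\pair{\lambda+\ve_{k}}{\eta}} \gch V_{\ed(B)}^{-}(\lambda+\ve_{k})$ yields the clean relation $\gch V_{\ed(B)t_{\eta+\xi}}^{-}(\lambda+\ve_{k}) = q^{-\pair{\lambda+\ve_{k}}{\xi}} \gch V_{\ed(B)t_{\eta}}^{-}(\lambda+\ve_{k})$. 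Since $\pair{\lambda+\ve_{k}}{\xi} = \pair{\lambda}{\xi} + \pair{\ve_{k}}{\xi}$, multiplying each term of the expansion by the overall factor $q^{-\pair{\lambda}{\xi}}$ and replacing $t_{\eta}$ by $t_{\eta+\xi}$ precisely converts $\gch V_{\ed(B)t_{\eta}}^{-}(\lambda+\ve_{k})$ into $q^{\pair{\ve_{k}}{\xi}}\gch V_{\ed(B)t_{\eta+\xi}}^{-}(\lambda+\ve_{k})$; the factor $q^{-\pair{\lambda}{\xi}}$ cancels, leaving only $q^{\pair{\ve_{k}}{\xi}}$. This matches the asserted right-hand side: the leading sum picks up $q^{\pair{\ve_{m}}{\xi}}$ and has $\down(B)$ replaced by $\down(B)+\xi$, while the $j$-th inner sum picks up an extra $q^{\pair{\ve_{j}}{\xi}}$, which combines with the existing $q^{\pair{\ve_{j}}{\wt(\bp_{m,j}(w))}}$ into $q^{\pair{\ve_{j}}{\wt(\bp_{m,j}(w))+\xi}}$, and has $\down(B)+\wt(\bp_{m,j}(w))$ replaced by $\down(B)+\wt(\bp_{m,j}(w))+\xi$ in the translation subscript.

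I expect essentially no genuine obstacle here; the content is entirely bookkeeping, and the only point requiring care is verifying that the power of $q$ is distributed correctly across the two families of summands (leading term versus the double sum over $j$ and paths), so that the uniform prefactor $q^{-\pair{\lambda}{\xi}}$ is absorbed consistently and the resulting exponents of $q$ involve $\xi$ paired against the correct standard basis vector $\ve_{k}$ in each term. Since the cancellation-free identity of Theorem~\ref{thm:IC_cancellation-free_first-half} has exactly the same structural shape as the (possibly-cancelling) identity of Theorem~\ref{thm:IC_first-half}, and the passage from the latter to Corollary~\ref{cor:IC_first-half_complete} is already asserted in the text to follow from Proposition~\ref{prop:gch_translation}, the argument is formally identical and the corollary follows immediately.
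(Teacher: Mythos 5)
Your proposal is correct and coincides with the paper's own (one-line) derivation: the paper obtains Corollary~\ref{cor:IC_cancellation-free_first-half_complete} from Theorem~\ref{thm:IC_cancellation-free_first-half} precisely by invoking Proposition~\ref{prop:gch_translation}, exactly as you do. Your bookkeeping of the factors $q^{-\pair{\lambda}{\xi}}$ and $q^{\pair{\ve_{k}}{\xi}}$ is accurate.
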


\subsection{Conjectural cancellation-free identities of inverse Chevalley type in the second-half case} \label{sec:cancellation-free_conjecture}

We make a conjecture for the cancellations of terms in the second-half identities of inverse Chevalley type. 
We define a total order $<$ on $[\overline{n}]$ by: $1 < 2 < \cdots < n < \overline{n} < \overline{n-1} < \cdots < \overline{1}$, 
and a distance function $d(\cdot, \cdot)$ on $[\overline{n}]$ as follows: 
\begin{enu}
\item for $k \in [\overline{n}]$, set $d(k, k) := 0$; 
\item for $k, l \in [\overline{n}]$ with $k > l$, set 
\begin{equation*}
d(k, l) := 
\begin{cases} 
k - l & \text{if $1 \le l < k \le n$,} \\ 
(2n+1-p) - l & \text{if $l = \overline{p}$ for some $1 \le p \le n$ and $1 \le l \le n$,} \\ 
q - p & \text{if $k = \overline{p}$ and $l = \overline{q}$ for some $1 \le p \le q \le n$;} 
\end{cases} 
\end{equation*} 
\item for $k, l \in [\overline{n}]$ with $k < l$, set $d(k, l) := d(l, k)$. 
\end{enu}
Also, for $1 \le l \le n$ and $1 \le k < \overline{l}$, we define $\gamma_{\overline{l}, k} \in \Delta^{+}$ by 
\begin{equation*}
\gamma_{\overline{l}, k} := 
\begin{cases}
(k, \overline{l}) & \text{if $k = 1, \ldots, l$}, \\ 
(l, \overline{k}) & \text{if $k = l+1, \ldots, n$}, \\ 
(l, p) & \text{if $k = \overline{p}$ for some $p = l+1, \ldots, n$}. 
\end{cases}
\end{equation*} 

Now, we define a directed path $\bp_{\overline{l}, m}(w)$ in $\QBG(W)$ for $w \in W$, $1 \le l \le n$, and $1 \le m \le \overline{l+1}$
by induction on $d(\overline{l}, m)$ as follows; 
we understand that $\overline{n+1} := n$.
\begin{enu}
\item If $d(\overline{l}, m) = 1$ (in this case, $\gamma_{\overline{l}, m}$ is a simple root), then $\bp_{\overline{l}, m}(w) : w \xrightarrow{\gamma_{\overline{l}, m}} ws_{\gamma_{\overline{l}, m}}$. 
\item If $d(\overline{l}, m) > 1$, then assume that $\bp_{q, m'}(v)$ is defined for $v \in W$ and $q, m' \in [\overline{n}]$, with $q > m'$, such that $0 < d(q, m') < d(\overline{l}, m)$; 
note that for $1 \le q \le n$, $\bp_{q, m'}(v)$ is already defined in Section~\ref{sec:cancellation-free_statement}. 
Take minimal $k \in [\overline{n}]$, with $m \le k \le \overline{l+1}$, such that $w \xrightarrow{\gamma_{\overline{l}, k}} ws_{\gamma_{\overline{l}, k}}$. 
Then, since $0 < d(k, m) < d(\overline{l}, m)$, $\bp_{k, m}(ws_{\gamma_{\overline{l}, k}})$ is defined. 
Let $\bp_{\overline{l}, m}(w)$ be the directed path obtained as the concatenation of the edge $w \xrightarrow{\gamma_{\overline{l}, k}} ws_{\gamma_{\overline{l}, k}}$ with the directed path $\bp_{k, m}(ws_{\gamma_{\overline{l}, k}})$. 
\end{enu}

Our conjectural cancellation-free identities in the second-half case are as follows. 
\begin{conj} \label{conj:IC_cancellation_second-half}
For $w \in W$, $m = 1, \ldots, n$, and $\lambda \in P^{+}$ such that $\lambda + \ve_{k} \in P^{+}$ for all $k = 1, \ldots, n$ and such that $\lambda - \ve_{k} \in P^{+}$ for $k = m+1, \ldots, n$, there exists some $m \le l \le n$ for which the following cancellation-free identity holds: 
\begin{align*}
& e^{-w\ve_{m}} \gch V_{w}^{-}(\lambda) \\ 
&= \sum_{B \in \CA(w, \Theta_{m})} (-1)^{|B|} \gch V_{\ed(B)t_{\down(B)}}^{-}(\lambda - \ve_{m}) \\ 
& \quad + \sum_{k = m+1}^{n} q^{-\pair{\ve_{k}}{\down(\bp_{\overline{m}, \overline{k}}(w))}} \\ 
& \quad \times \sum_{B \in \CA(\ed(\bp_{\overline{m}, \overline{k}}(w)), \Theta_{k})} (-1)^{|B|} \gch V_{\ed(B)t_{\down(B)+\down(\bp_{\overline{m}, \overline{k}}(w))}}^{-}(\lambda - \ve_{k}) \\ 
& \quad + \sum_{k = 1}^{l} q^{\pair{\ve_{k}}{\down(\bp_{\overline{m}, k}(w))}} \\ 
& \quad \times \sum_{B \in \CA(\ed(\bp_{\overline{m}, k}(w)), \Gamma_{k}(k))} (-1)^{|B|} \gch V_{\ed(B)t_{\down(B)+\down(\bp_{\overline{m}, k}(w))}}^{-}(\lambda + \ve_{k}). 
\end{align*}
\end{conj}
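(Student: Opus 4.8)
The plan is to derive Conjecture~\ref{conj:IC_cancellation_second-half} from the identity-with-cancellations of Theorem~\ref{thm:IC_second-half} by exactly the cancellation mechanism that produces the first-half cancellation-free identity (Theorem~\ref{thm:IC_cancellation-free_first-half}) from its own identity-with-cancellations in Section~\ref{sec:cancellation-free_proof}. First I would fix the final label $k$ together with the sign of $\ve_k$ appearing in the target character $\gch V^{-}_{\bullet}(\lambda \pm \ve_k)$, and collect all terms of \eqref{eq:IC_second-half} contributing to that target. For each such $k$, the contribution is a nested, signed sum over strictly decreasing sequences $(j_{1}, \ldots, j_{r})$ (lying in $\CS_{\overline{m}, \overline{k}}$ for the $\lambda - \ve_k$ part, and in $\CS_{\overline{m}, k}$ for the $\lambda + \ve_k$ part) and over tuples of nonempty admissible subsets $A_{1} \in \CA_{w}^{\overline{m}, j_{1}}, \ldots, A_{r} \in \CA_{\ed(A_{r-1})}^{j_{r-1}, j_{r}}$. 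The claim to be proved is that each such nested sum collapses, after cancellation, to the single term attached to the canonical path $\bp_{\overline{m}, \overline{k}}(w)$ or $\bp_{\overline{m}, k}(w)$ constructed in Section~\ref{sec:cancellation-free_conjecture}, with quantum weight $\down(\bp_{\overline{m}, \bullet}(w))$ (equivalently $\wt$) recording the shift in the translation part.

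The engine of the argument should be a cancellation lemma in the quantum Bruhat graph $\QBG(W)$, analogous to the one underlying Theorem~\ref{thm:IC_cancellation-free_first-half}. The lemma would assert that, for fixed source $w$ and fixed endpoint $\ed(A_{r})$, the signed sum of $(-1)^{|A_{1}|+\cdots+|A_{r}|-r}$ over all admissible tuples realizing a given composite step vanishes unless the tuple coincides with the greedy choice prescribed by the inductive definition of $\bp$, in which case it contributes a single term. I would prove this by a sign-reversing involution on non-greedy tuples, carried out by induction on the distance function $d(\cdot, \cdot)$ on $[\overline{n}]$: given a tuple that departs from the greedy path at its first step, one pairs it with a neighboring tuple of opposite sign by toggling a single edge, using the definition of $\gamma_{\overline{l}, k}$ to control which root is inserted or removed. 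The three cases in the definition of $\gamma_{\overline{l}, k}$---the roots $(k, \overline{l})$, $(l, \overline{k})$, and $(l, p)$---correspond to the three regimes of the distance function and must be treated separately, the last one being the genuinely non-simply-laced regime in which a sign change in the $\ve$-coordinates occurs.

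The genuinely new feature, absent from the first-half case, is the \emph{interaction} between the two families of contributions: the $\Theta$-type chains, which produce targets $\lambda - \ve_k$, and the $\Gamma_{k}(k)$-type chains, which produce targets $\lambda + \ve_k$. A single canonical path $\bp_{\overline{m}, k}(w)$ may pass through both kinds of edges---reflections $\ve_i - \ve_j$, sign changes $\ve_i + \ve_j$, and the long-root reflection $2\ve_k$---so the induction on $d$ must be arranged so that the greedy path is well defined across all three regimes simultaneously. Accordingly, I would first verify that the minimal $k$ selected in the inductive step of the definition of $\bp_{\overline{l}, m}(w)$ always exists, i.e., that some admissible edge $w \xrightarrow{\gamma_{\overline{l}, k}} ws_{\gamma_{\overline{l}, k}}$ is present; this well-definedness, together with the cancellation lemma, would then match each surviving term of \eqref{eq:IC_second-half} with the corresponding term of the conjectured identity.

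The hard part will be pinning down the threshold $l$ and proving the \emph{total} vanishing of the $\lambda + \ve_k$ contributions for $k > l$. Unlike the first-half identity, where every label contributes a single surviving term, here the greedy construction of $\bp_{\overline{m}, k}(w)$ can fail to produce an admissible edge once $k$ exceeds a $w$-dependent bound, and beyond that bound the entire nested sum for that $k$ must cancel to zero rather than collapse. Establishing this requires a finer analysis of which vertices of $\QBG(W)$ are reachable from $w$ by admissible $\Gamma_{k}(k)$-paths, and is precisely the point where the presence of the long roots $2\ve_k$ and the quantum edges they generate obstructs a direct transcription of the simply-laced arguments of \cite{LNOS}. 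I expect that identifying $l$ explicitly---as the largest $k$ for which the greedy edge exists---and then proving the vanishing for $k > l$ by an independent involution on the would-be surviving terms, will constitute the technical core of the proof.
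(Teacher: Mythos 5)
The statement you are addressing is Conjecture~\ref{conj:IC_cancellation_second-half}, which the paper explicitly leaves \emph{unproved}: the authors state it as a conjecture, support it only with the two worked examples in the examples subsection (where $l=3=n$ in one case and $l=1=m$ in the other), and remark only that they ``expect that the $l$ in the above conjectures is either $m$ or $n$.'' There is therefore no proof in the paper to compare your attempt against, and your proposal should be judged on its own as a purported proof of an open statement.

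As such, it has a genuine gap: it is a strategy, not a proof, and you say so yourself. The two load-bearing ingredients are never supplied. First, the cancellation lemma --- that the nested signed sum over tuples $(A_{1},\ldots,A_{r})$ with fixed final label collapses to the single greedy term attached to $\bp_{\overline{m},\overline{k}}(w)$ or $\bp_{\overline{m},k}(w)$ --- is only asserted; the sign-reversing involution is described in one sentence (``toggling a single edge'') without specifying the toggle, verifying it preserves $\down(\cdot)$ and the endpoint, or checking it is well defined across the three regimes of $\gamma_{\overline{l},k}$. In the first-half case the analogous collapse rests on Lemmas~\ref{lem:QBG_exchange}--\ref{lem:QBG_existence} and Corollary~\ref{cor:IC_cancellation_minimum}, which in turn use the criterion of Lemma~\ref{lem:QBG_criterion} and a reflection-order/shortest-path argument via \cite{P}; you would need type-$C$ analogues of all of these for the chains $\Gamma_{k}(k)$ (which contain the long roots $(k,\overline{k})=2\ve_{k}$ and the roots $(i,\overline{j})$), and none are stated. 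Second, and more seriously, the identification of the threshold $l$ and the \emph{total} vanishing of the $\lambda+\ve_{k}$ contributions for $k>l$ --- which you correctly identify as the genuinely new phenomenon absent from the first-half case --- is deferred entirely (``I expect that \ldots will constitute the technical core''). Your proposed characterization of $l$ as the largest $k$ for which the greedy edge exists is a plausible guess but is unverified and is not obviously consistent with the authors' expectation that $l\in\{m,n\}$. A minor additional issue: for a directed path the paper defines $\wt(\cdot)$, not $\down(\cdot)$, so the exponents $\down(\bp_{\overline{m},k}(w))$ in the conjecture need the identification you gloss as ``equivalently $\wt$'' to be made precise. In short, the outline is sensible and correctly locates the difficulties, but nothing in it settles the conjecture.
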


\begin{conj}
For $x = wt_{\xi} \in W_{\af}$ with $w \in W$ and $\xi \in Q^{\vee}$, $m = 1, \ldots, n$, and $\lambda \in P^{+}$ such that $\lambda + \ve_{k} \in P^{+}$ for all $k = 1, \ldots, n$ and such that $\lambda - \ve_{k} \in P^{+}$ for $k = m+1, \ldots, n$, there exists some $m \le l \le n$ for which the following cancellation-free identity holds: 
\begin{align*}
& e^{-w\ve_{m}} \gch V_{x}^{-}(\lambda) \\ 
&= q^{-\pair{\ve_{m}}{\xi}} \sum_{B \in \CA(w, \Theta_{m})} (-1)^{|B|} \gch V_{\ed(B)t_{\down(B)+\xi}}^{-}(\lambda - \ve_{m}) \\ 
& \quad + \sum_{k = m+1}^{n} q^{-\pair{\ve_{k}}{\down(\bp_{\overline{m}, \overline{k}}(w))+\xi}} \\ 
& \quad \times \sum_{B \in \CA(\ed(\bp_{\overline{m}, \overline{k}}(w)), \Theta_{k})} (-1)^{|B|} \gch V_{\ed(B)t_{\down(B)+\down(\bp_{\overline{m}, \overline{k}}(w))+\xi}}^{-}(\lambda - \ve_{k}) \\ 
& \quad + \sum_{k = 1}^{l} q^{\pair{\ve_{k}}{\down(\bp_{\overline{m}, k}(w))+\xi}} \\ 
& \quad \times \sum_{B \in \CA(\ed(\bp_{\overline{m}, k}(w)), \Gamma_{k}(k))} (-1)^{|B|} \gch V_{\ed(B)t_{\down(B)+\down(\bp_{\overline{m}, k}(w))+\xi}}^{-}(\lambda + \ve_{k}). 
\end{align*}
\end{conj}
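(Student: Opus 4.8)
The plan is to deduce this statement as a formal consequence of Conjecture~\ref{conj:IC_cancellation_second-half}, which is precisely the special case $\xi = 0$ (so that $x = w \in W$), by the same translation argument that produces Corollaries~\ref{cor:IC_first-half_complete}, \ref{cor:IC_second-half_complete}, and \ref{cor:IC_cancellation-free_first-half_complete} from Theorems~\ref{thm:IC_first-half}, \ref{thm:IC_second-half}, and \ref{thm:IC_cancellation-free_first-half}. The only additional ingredient is Proposition~\ref{prop:gch_translation}, and a key point is that the directed paths $\bp_{\overline{m}, \overline{k}}(w)$ and $\bp_{\overline{m}, k}(w)$, together with the statistics $\down(\cdot)$ and $\ed(\cdot)$ and the admissible subsets $B$, all depend only on $w$ and the indices involved, not on $\xi$; hence the same choice of $l$ that validates Conjecture~\ref{conj:IC_cancellation_second-half} for the pair $(w, m)$ will validate the present statement for $(x, m)$.

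First I would write $x = wt_{\xi}$ and apply Proposition~\ref{prop:gch_translation} to the left-hand side to obtain
\begin{equation*}
e^{-w\ve_{m}} \gch V_{x}^{-}(\lambda) = q^{-\pair{\lambda}{\xi}} e^{-w\ve_{m}} \gch V_{w}^{-}(\lambda),
\end{equation*}
and then substitute for $e^{-w\ve_{m}} \gch V_{w}^{-}(\lambda)$ the right-hand side of Conjecture~\ref{conj:IC_cancellation_second-half}. The resulting expression is $q^{-\pair{\lambda}{\xi}}$ times a finite sum of graded characters of the form $\gch V_{\ed(B)t_{\eta}}^{-}(\lambda')$, where $\lambda' \in \{\lambda - \ve_{m}, \lambda - \ve_{k}, \lambda + \ve_{k}\}$ and $\eta$ is the corresponding translation (e.g.\ $\eta = \down(B) + \down(\bp_{\overline{m}, k}(w))$ in the last family). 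The second step is to reabsorb the factor $q^{-\pair{\lambda}{\xi}}$ term by term: applying Proposition~\ref{prop:gch_translation} once more in the form $\gch V_{\ed(B)t_{\eta}}^{-}(\lambda') = q^{\pair{\lambda'}{\xi}} \gch V_{\ed(B)t_{\eta + \xi}}^{-}(\lambda')$ shifts the translation element from $\eta$ to $\eta + \xi$, which is exactly the translation appearing in the target identity.

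The mechanism by which the correct coefficients emerge is the cancellation of the $\lambda$-dependence: for each summand one combines the global factor with the term-wise factor to get
\begin{equation*}
q^{-\pair{\lambda}{\xi}} \cdot q^{\pair{\lambda'}{\xi}} = q^{\pair{\lambda' - \lambda}{\xi}} = q^{\pm \pair{\ve_{k}}{\xi}},
\end{equation*}
where the sign is $-$ for the $\lambda - \ve_{m}$ and $\lambda - \ve_{k}$ families and $+$ for the $\lambda + \ve_{k}$ family (with $k$ replaced by $m$ in the first term). Folding these factors into the $q$-powers already present in Conjecture~\ref{conj:IC_cancellation_second-half}, namely $q^{-\pair{\ve_{k}}{\down(\bp_{\overline{m}, \overline{k}}(w))}}$ and $q^{\pair{\ve_{k}}{\down(\bp_{\overline{m}, k}(w))}}$, reproduces precisely the coefficients $q^{-\pair{\ve_{m}}{\xi}}$, $q^{-\pair{\ve_{k}}{\down(\bp_{\overline{m}, \overline{k}}(w)) + \xi}}$, and $q^{\pair{\ve_{k}}{\down(\bp_{\overline{m}, k}(w)) + \xi}}$ in the stated identity. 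The genuine obstacle here is not this reduction, which is entirely routine, but the fact that the whole argument is conditional on the unproven Conjecture~\ref{conj:IC_cancellation_second-half}; once that conjecture is established, the present statement follows immediately and unconditionally by the steps above.
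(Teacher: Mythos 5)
Your proposal is correct and takes exactly the route the paper intends: the general-$x$ statement is deduced from Conjecture~\ref{conj:IC_cancellation_second-half} by the same two applications of Proposition~\ref{prop:gch_translation} (once on the left-hand side and once term-by-term on the right) that produce Corollaries~\ref{cor:IC_first-half_complete}, \ref{cor:IC_second-half_complete}, and \ref{cor:IC_cancellation-free_first-half_complete} from their $\xi = 0$ counterparts, with the factors $q^{\pair{\lambda'-\lambda}{\xi}} = q^{\pm\pair{\ve_{k}}{\xi}}$ combining as you describe and the same $l$ working since nothing in the admissible subsets or paths depends on $\xi$. You also correctly flag that the argument remains conditional on the unproven Conjecture~\ref{conj:IC_cancellation_second-half}, which is why the paper states this as a conjecture rather than a corollary.
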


We expect that the $l$ in the above conjectures is either $m$ or $n$. 

\subsection{Examples of identities of inverse Chevalley type}

We give an example of the identities given by Theorem~\ref{thm:IC_cancellation-free_first-half}, and also give two examples of the conjectural identities proposed by Conjecture~\ref{conj:IC_cancellation_second-half}; 
in these examples, we assume that $n = 3$. 
We used SageMath \cite{Sage} for calculations in these examples. 

\begin{exm}
We consider the product $e^{w\ve_{3}} \gch V_{w}^{-}(\lambda)$. Let $w = s_{1}s_{2}s_{1}$, and take $\lambda \in P^{+}$ such that $\lambda + \ve_{k} \in P^{+}$ for $k = 1, 2, 3$. 
We see that 
\begin{equation*}
\CS_{3, 1} = \{ (1), (2, 1) \}, \quad \CS_{3, 2} = \{ (2) \}, 
\end{equation*}
and that the admissible subsets which appear on the right-hand side of \eqref{eq:IC_first-half} are as follows: 
\begin{equation*}
\CA_{s_{1}s_{2}s_{1}}^{3, 1} = \{ \underbrace{\{1\}}_{=: A_{1}}, \underbrace{\{1, 2\}}_{=: A_{2}} \}, \quad \CA_{s_{1}s_{2}s_{1}}^{3, 2} = \{ \underbrace{\{2\}}_{=: A_{3}} \}, \quad \CA_{s_{2}s_{1}}^{2, 1} = \{ \underbrace{\{1\}}_{=: A_{4}} \}. 
\end{equation*}
Table~\ref{tab:list_statistics1} is the list of $\ed(\cdot)$ and $\down(\cdot)$ for admisible subsets $A_{1}, A_{2}, A_{3}, A_{4}$. 
\begin{table}[ht]
\centering
\caption{The list of $\ed(A)$ and $\down(A)$ for $A = A_{1}, A_{2}, A_{3}, A_{4}$}
\label{tab:list_statistics1}
\begin{tabular}{|c||c|c|} \hline
Admissible subsets & $\ed(\cdot)$ & $\down(\cdot)$ \\ \hline
$A_{1}$ & $e$ & $\alpha_{1}^{\vee}+\alpha_{2}^{\vee}$ \\ 
$A_{2}$ & $s_{2}$ & $\alpha_{1}^{\vee}+\alpha_{2}^{\vee}$ \\ 
$A_{3}$ & $s_{2}s_{1}$ & $\alpha_{2}^{\vee}$ \\ 
$A_{4}$ & $s_{2}$ & $\alpha_{1}^{\vee}$ \\ \hline
\end{tabular}
\end{table}

Therefore, by Theorem~\ref{thm:IC_first-half}, we compute: 
\begin{align*}
& e^{s_{1}s_{2}s_{1}\ve_{3}} \gch V_{s_{1}s_{2}s_{1}}^{-}(\lambda) \\ 
\begin{split}
&= \sum_{B \in \CA(s_{1}s_{2}s_{1}, \Gamma_{3}(3))} (-1)^{|B|} \gch V_{\ed(B)t_{\down(B)}}^{-}(\lambda + \ve_{3}) \\ 
& \quad \underbrace{{}+ q^{\pair{\ve_{2}}{\alpha_{2}^{\vee}}} \sum_{B \in \CA(s_{2}s_{1}, \Gamma_{2}(2))} (-1)^{|B|} \gch V_{\ed(B)t_{\down(B)+\alpha_{2}^{\vee}}}^{-}(\lambda + \ve_{2})}_{A_{3}} \\ 
& \quad \underbrace{{}+ q^{\pair{\ve_{1}}{\alpha_{1}^{\vee}+\alpha_{2}^{\vee}}} \sum_{B \in \CA(e, \Gamma_{1}(1))} (-1)^{|B|} \gch V_{\ed(B)t_{\down(B)+\alpha_{1}^{\vee}+\alpha_{2}^{\vee}}}^{-}(\lambda + \ve_{1})}_{A_{1}} \\ 
& \quad \underbrace{{}- q^{\pair{\ve_{1}}{\alpha_{1}^{\vee}+\alpha_{2}^{\vee}}} \sum_{B \in \CA(s_{2}, \Gamma_{1}(1))} (-1)^{|B|} \gch V_{\ed(B)t_{\down(B)+\alpha_{1}^{\vee}+\alpha_{2}^{\vee}}}^{-}(\lambda + \ve_{1})}_{A_{2}} \\ 
& \quad \underbrace{{}+q^{\pair{\ve_{1}}{\alpha_{1}^{\vee}+\alpha_{2}^{\vee}}} \sum_{B \in \CA(s_{2}, \Gamma_{1}(1))} (-1)^{|B|} \gch V_{\ed(B)t_{\down(B)+\alpha_{1}^{\vee}+\alpha_{2}^{\vee}}}^{-}(\lambda + \ve_{1})}_{(A_{3}, A_{4})}
\end{split} \\ 
\begin{split}
&= \sum_{B \in \CA(s_{1}s_{2}s_{1}, \Gamma_{3}(3))} (-1)^{|B|} \gch V_{\ed(B)t_{\down(B)}}^{-}(\lambda + \ve_{3}) \\ 
& \quad + q^{\pair{\ve_{2}}{\alpha_{2}^{\vee}}} \sum_{B \in \CA(s_{2}s_{1}, \Gamma_{2}(2))} (-1)^{|B|} \gch V_{\ed(B)t_{\down(B)+\alpha_{2}^{\vee}}}^{-}(\lambda + \ve_{2}) \\ 
& \quad + q^{\pair{\ve_{1}}{\alpha_{1}^{\vee}+\alpha_{2}^{\vee}}} \sum_{B \in \CA(e, \Gamma_{1}(1))} (-1)^{|B|} \gch V_{\ed(B)t_{\down(B)+\alpha_{1}^{\vee}+\alpha_{2}^{\vee}}}^{-}(\lambda + \ve_{1}). \\ 
\end{split}
\end{align*}
The above result agrees with Theorem~\ref{thm:IC_cancellation-free_first-half},
since we have 
\begin{equation*}
\bp_{3, 2}(s_{1}s_{2}s_{1}): s_{1}s_{2}s_{1} \xrightarrow{(2, 3)} s_{2}s_{1}, \quad \bp_{3, 1}(s_{1}s_{2}s_{1}): s_{1}s_{2}s_{1} \xrightarrow{(1, 3)} e. 
\end{equation*}
\end{exm}

\begin{exm}
We consider the product $e^{-w\ve_{2}}\gch V_{w}^{-}(\lambda)$. Let $w = s_{3}s_{2}$, and take $\lambda \in P^{+}$ such that $\lambda + \ve_{k} \in P^{+}$ for $k = 1, 2, 3$ and such that $\lambda - \ve_{k} \in P^{+}$ for $k = 2, 3$. 
We see that the sets $\CS_{\overline{2}, k}$ for $k = 1, 2, 3, \overline{3}$ 
are: 
\begin{align*}
\CS_{\overline{2}, 1} &= \{ (1), (2, 1), (3, 1), (\overline{3}, 1), (3, 2, 1), (\overline{3}, 2, 1), (\overline{3}, 3, 1), (\overline{3}, 3, 2, 1) \}, \\ 
\CS_{\overline{2}, 2} &= \{ (2), (3, 2), (\overline{3}, 2), (\overline{3}, 3, 2) \}, \\ 
\CS_{\overline{2}, 3} &= \{ (3), (\overline{3}, 3) \}, \\ 
\CS_{\overline{2}, \overline{3}} &= \{ (\overline{3}) \}, 
\end{align*}
and that the admissible subsets which appear on the right-hand side of \eqref{eq:IC_second-half} are as follows: 
\begin{align*}
\CA_{s_{3}s_{2}}^{\overline{2}, 1} &= \emptyset, & \CA_{s_{3}s_{2}}^{\overline{2}, 2} &= \{ \underbrace{\{2, 4\}}_{=: A_{1}}, \underbrace{\{2, 3, 4\}}_{=: A_{2}} \}, & \CA_{s_{3}s_{2}}^{\overline{2}, 3} &= \{ \underbrace{\{2\}}_{=: A_{3}}, \underbrace{\{2, 3\}}_{=: A_{4}} \}, & \CA_{s_{3}s_{2}}^{\overline{2}, \overline{3}} &= \{ \underbrace{\{4\}}_{=: A_{5}} \}, \\ 
\CA_{s_{3}}^{\overline{3}, 1} &= \emptyset, & \CA_{s_{3}}^{\overline{3}, 2} &= \{ \underbrace{\{2\}}_{=: A_{6}}, \underbrace{\{2, 3\}}_{=: A_{7}} \}, & \CA_{s_{3}}^{\overline{3}, 3} &= \{ \underbrace{\{3\}}_{=: A_{8}} \}, \\ 
\CA_{e}^{3, 1} &= \emptyset, & \CA_{e}^{3, 2} &= \{ \underbrace{\{2\}}_{=: A_{9}} \}, \\
\CA_{s_{2}s_{3}s_{2}}^{3, 1} &= \{ \underbrace{\{1\}}_{=: A_{10}}, \underbrace{\{1, 2\}}_{=: A_{11}} \}, & \CA_{s_{2}s_{3}s_{2}}^{3, 2} &= \{ \underbrace{\{2\}}_{=: A_{12}} \}, \\ 
\CA_{s_{2}s_{3}}^{2, 1} &= \{ \underbrace{\{1\}}_{=: A_{13}} \}, \\ 
\CA_{s_{2}}^{2, 1} &= \{ \underbrace{\{1\}}_{=: A_{14}} \}. 
\end{align*}
Table~\ref{tab:list_statistics2} is the list of $\ed(\cdot)$ and $\down(\cdot)$ for admissible subsets given above. 
\begin{table}[ht]
\centering
\caption{The list of $\ed(A)$ and $\down(A)$ for $A = A_{1}, \ldots, A_{14}$}
\label{tab:list_statistics2}
\begin{tabular}{|c||c|c|} \hline
Admissible subset & $\ed(\cdot)$ & $\down(\cdot)$ \\ \hline
$A_{1}$ & $s_{2}s_{3}$ & $\alpha_{2}^{\vee}$ \\ 
$A_{2}$ & $s_{2}$ & $\alpha_{2}^{\vee}+\alpha_{3}^{\vee}$ \\ 
$A_{3}$ & $s_{2}s_{3}s_{2}$ & $0$ \\ 
$A_{4}$ & $e$ & $\alpha_{2}^{\vee}+\alpha_{3}^{\vee}$ \\ 
$A_{5}$ & $s_{3}$ & $\alpha_{2}^{\vee}$ \\ 
$A_{6}$ & $s_{2}s_{3}$ & $0$ \\ 
$A_{7}$ & $s_{2}$ & $\alpha_{3}^{\vee}$ \\ 
$A_{8}$ & $e$ & $\alpha_{3}^{\vee}$ \\ 
$A_{9}$ & $s_{2}$ & $0$ \\ 
$A_{10}$ & $s_{2}s_{3}s_{1}s_{2}$ & $0$ \\ 
$A_{11}$ & $s_{2}s_{3}s_{1}$ & $\alpha_{2}^{\vee}$ \\ 
$A_{12}$ & $s_{2}s_{3}$ & $\alpha_{2}^{\vee}$ \\ 
$A_{13}$ & $s_{2}s_{3}s_{1}$ & $0$ \\ 
$A_{14}$ & $s_{2}s_{1}$ & $0$ \\ \hline
\end{tabular}
\end{table}

Therefore, by Theorem~\ref{thm:IC_second-half}, we see that 
\begin{align*}
& e^{-s_{3}s_{2}\ve_{2}} \gch V_{s_{3}s_{2}}^{-}(\lambda) \\ 
\begin{split}
&= \sum_{B \in \CA(s_{3}s_{2}, \Theta_{2})} (-1)^{|B|} \gch V_{\ed(B)t_{\down(B)}}^{-}(\lambda - \ve_{2}) \\ 
& \quad \underbrace{{}+{} q^{-\pair{\ve_{3}}{\alpha_{2}^{\vee}}} \sum_{B \in \CA(s_{3}, \Theta_{3})} (-1)^{|B|} \gch V_{\ed(B)t_{\down(B)+\alpha_{2}^{\vee}}}^{-}(\lambda - \ve_{3})}_{A_{5}} \\ 
& \quad \underbrace{{}+{} \sum_{B \in \CA(s_{2}s_{3}s_{2}, \Gamma_{3}(3))} (-1)^{|B|} \gch V_{\ed(B)t_{\down(B)}}^{-}(\lambda + \ve_{3})}_{A_{3}} \\ 
& \quad \underbrace{{}+{} q^{\pair{\ve_{2}}{\alpha_{2}^{\vee}}} \sum_{B \in \CA(s_{2}s_{3}, \Gamma_{2}(2))} (-1)^{|B|} \gch V_{\ed(B)t_{\down(B)+\alpha_{2}^{\vee}}}^{-}(\lambda + \ve_{2})}_{(A_{3}, A_{12})} \\ 
& \quad \underbrace{{}+{} \sum_{B \in \CA(s_{2}s_{3}s_{1}s_{2}, \Gamma_{1}(1))} (-1)^{|B|} \gch V_{\ed(B)t_{\down(B)}}^{-}(\lambda + \ve_{1})}_{(A_{3}, A_{10})}; 
\end{split}
\end{align*}
here, the other terms cancel out. 
The above result agrees with Conjecture~\ref{conj:IC_cancellation_second-half} if we take $l = 3$, since we have 
\begin{align*}
& \bp_{\overline{2}, \overline{3}}(s_{3}s_{2}): s_{3}s_{2} \xrightarrow{(2, 3)} s_{3}, & & \bp_{\overline{2}, 3}(s_{3}s_{2}): s_{3}s_{2} \xrightarrow{(2, \overline{3})} s_{2}s_{3}s_{2}, \\
& \bp_{\overline{2}, 2}(s_{3}s_{2}): s_{3}s_{2} \xrightarrow{(2, \overline{3})} s_{2}s_{3}s_{2} \xrightarrow{(2, 3)} s_{2}s_{3}, & & \bp_{\overline{2}, 1}(s_{3}s_{2}): s_{3}s_{2} \xrightarrow{(2, \overline{3})} s_{2}s_{3}s_{2} \xrightarrow{(1, 3)} s_{2}s_{3}s_{1}s_{2}. 
\end{align*}
\end{exm}

\begin{exm} 
We consider the product $e^{-w\ve_{1}}\gch V_{w}^{-}(\lambda)$. Let $w = s_{1}s_{2}s_{3}s_{2}s_{1}$, and take $\lambda \in P^{+}$ such that $\lambda + \ve_{k} \in P^{+}$ for $k = 1, 2, 3$ and such that $\lambda - \ve_{k} \in P^{+}$ for $k = 1, 2, 3$. 
We see that the sets $\CS_{\overline{1}, k}$ for $k = 1, 2, 3, \overline{3}, \overline{2}$ are: 
\begin{align*}
\CS_{\overline{1}, 1} &= \{ (1), (2, 1), (3, 1), (\overline{3}, 1), (\overline{2}, 1), (3, 2, 1), (\overline{3}, 2, 1), (\overline{2}, 2, 1), (\overline{3}, 3, 1), (\overline{2}, 3, 1), (\overline{2}, \overline{3}, 1), \\ 
& \qquad (\overline{3}, 3, 2, 1), (\overline{2}, 3, 2, 1), (\overline{2}, \overline{3}, 2, 1), (\overline{2}, \overline{3}, 3, 1), (\overline{2}, \overline{3}, 3, 2, 1) \}, \\
\CS_{\overline{1}, 2} &= \{ (2), (3, 2), (\overline{3}, 2), (\overline{2}, 2), (\overline{3}, 3, 2), (\overline{2}, 3, 2), (\overline{2}, \overline{3}, 2), (\overline{2}, \overline{3}, 3, 2) \} \\ 
\CS_{\overline{1}, 3} &= \{ (3), (\overline{3}, 3), (\overline{2}, 3), (\overline{2}, \overline{3}, 3) \} \\ 
\CS_{\overline{1}, \overline{3}} &= \{ (\overline{3}), (\overline{2}, \overline{3}) \} \\ 
\CS_{\overline{1}, \overline{2}} &= \{ (\overline{2}) \}, 
\end{align*}
and that the admissible subsets which appear on the right-hand side of \eqref{eq:IC_second-half} are as follows: 
\begin{align*}
\CA_{s_{1}s_{2}s_{3}s_{2}s_{1}}^{\overline{1}, 1} &= \{\underbrace{\{3\}}_{=: A_{1}}\}, & \CA_{s_{1}s_{2}s_{3}s_{2}s_{1}}^{\overline{1}, 2} &= \{\underbrace{\{3, 5\}}_{=:A_{2}}\}, & \CA_{s_{1}s_{2}s_{3}s_{2}s_{1}}^{\overline{1}, 3} &= \emptyset, \\ \CA_{s_{1}s_{2}s_{3}s_{2}s_{1}}^{\overline{1}, \overline{3}} &= \emptyset, & \CA_{s_{1}s_{2}s_{3}s_{2}s_{1}}^{\overline{1}, \overline{2}} &= \{\underbrace{\{5\}}_{=:A_{3}}\}, \\ 
\CA_{s_{1}s_{2}s_{3}s_{2}}^{\overline{2}, 1} &= \emptyset, & \CA_{s_{1}s_{2}s_{3}s_{2}}^{\overline{2}, 2} &= \{\underbrace{\{3\}}_{=:A_{4}}\}, & \CA_{s_{1}s_{2}s_{3}s_{2}}^{\overline{2}, 3} &= \{\underbrace{\{3, 4\}}_{=:A_{5}}\}, \\ 
\CA_{s_{1}s_{2}s_{3}s_{2}}^{\overline{2}, \overline{3}} &= \{\underbrace{\{4\}}_{=:A_{6}}\}, \\ 
\CA_{s_{1}s_{2}s_{3}}^{\overline{3}, 1} &= \emptyset, & \CA_{s_{1}s_{2}s_{3}}^{\overline{3}, 2} &= \emptyset, & \CA_{s_{1}s_{2}s_{3}}^{\overline{3}, 3} &= \{\underbrace{\{3\}}_{=:A_{7}}\}, \displaybreak[1] \\ 
\CA_{s_{1}s_{2}}^{3, 1} &= \emptyset, & \CA_{s_{1}s_{2}}^{3, 2} &= \{\underbrace{\{2\}}_{=:A_{8}}\}, \\ 
\CA_{s_{1}}^{2, 1} &= \{\underbrace{\{1\}}_{=:A_{9}}\}. 
\end{align*}
Table~\ref{tab:list_statistics3} is the list of $\ed(\cdot)$ and $\down(\cdot)$ for admissible subsets given above. 
\begin{table}[ht]
\centering
\caption{The list of $\ed(A)$ and $\down(A)$ for $A = A_{1}, \ldots, A_{9}$}
\label{tab:list_statistics3}
\begin{tabular}{|c||cc|} \hline 
$A$ & $\ed(A)$ & $\down(A)$ \\ \hline 
$A_{1}$ & $e$ & $\alpha_{1}^{\vee}+\alpha_{2}^{\vee}+\alpha_{3}^{\vee}$ \\ 
$A_{2}$ & $s_{1}$ & $\alpha_{1}^{\vee}+\alpha_{2}^{\vee}+\alpha_{3}^{\vee}$ \\ 
$A_{3}$ & $s_{1}s_{2}s_{3}s_{2}$ & $\alpha_{1}^{\vee}$ \\ 
$A_{4}$ & $s_{1}$ & $\alpha_{2}^{\vee}+\alpha_{3}^{\vee}$ \\ 
$A_{5}$ & $s_{1}s_{2}$ & $\alpha_{2}^{\vee}+\alpha_{3}^{\vee}$ \\ 
$A_{6}$ & $s_{1}s_{2}s_{3}$ & $\alpha_{2}^{\vee}$ \\ 
$A_{7}$ & $s_{1}s_{2}$ & $\alpha_{3}^{\vee}$ \\ 
$A_{8}$ & $s_{1}$ & $\alpha_{2}^{\vee}$ \\ 
$A_{9}$ & $e$ & $\alpha_{1}^{\vee}$ \\ \hline
\end{tabular}
\end{table}

Therefore, by Theorem~\ref{thm:IC_second-half}, we see that 
\begin{align*}
& e^{-s_{1}s_{2}s_{3}s_{2}s_{1}\ve_{1}} \gch V_{s_{1}s{2}s_{3}s_{2}s_{1}}^{-}(\lambda) \\ 
&= \sum_{B \in \CA(s_{1}s_{2}s_{3}s_{2}s_{1}, \Theta_{1})} (-1)^{|B|} \gch V_{\ed(B)t_{\down(B)}}^{-}(\lambda - \ve_{1}) \\ 
& \quad + \underbrace{q^{-\pair{\ve_{2}}{\alpha_{1}^{\vee}}} \sum_{B \in \CA(s_{1}s_{2}s_{3}s_{2}, \Theta_{2})} (-1)^{|B|} \gch V_{\ed(B)t_{\down(B)+\alpha_{1}^{\vee}}}^{-}(\lambda - \ve_{2})}_{A_{3}} \\ 
& \quad + \underbrace{q^{-\pair{\ve_{3}}{\alpha_{1}^{\vee}+\alpha_{2}^{\vee}}} \sum_{B \in \CA(s_{1}s_{2}s_{3}, \Theta_{3})} (-1)^{|B|} \gch V_{\ed(B)t_{\down(B)+\alpha_{1}^{\vee}+\alpha_{2}^{\vee}}}^{-} (\lambda - \ve_{3})}_{(A_{3}, A_{6})} \\ 
& \quad + \underbrace{q^{\pair{\ve_{1}}{\alpha_{1}^{\vee}+\alpha_{2}^{\vee}+\alpha_{3}^{\vee}}} \sum_{B \in \CA(e, \Gamma_{1}(1))} (-1)^{|B|} \gch V_{\ed(B)t_{\down(B)+\alpha_{1}^{\vee}+\alpha_{2}^{\vee}+\alpha_{3}^{\vee}}}^{-} (\lambda + \ve_{1})}_{A_{1}}; 
\end{align*}
here, the other terms cancel out. 
The above result agrees with Conjecture~\ref{conj:IC_cancellation_second-half} if we take $l = 1$, since we have 
\begin{align*}
& \bp_{\overline{1}, \overline{2}}(s_{1}s_{2}s_{3}s_{2}s_{1}): s_{1}s_{2}s_{3}s_{2}s_{1} \xrightarrow{(1, 2)} s_{1}s_{2}s_{3}s_{2}, \\
& \bp_{\overline{1}, \overline{3}}(s_{1}s_{2}s_{3}s_{2}s_{1}): s_{1}s_{2}s_{3}s_{2}s_{1} \xrightarrow{(1, 2)} s_{1}s_{2}s_{3}s_{2} \xrightarrow{(2, 3)} s_{1}s_{2}s_{3}, \\ 
& \bp_{\overline{1}, 1}(s_{1}s_{2}s_{3}s_{2}s_{1}): s_{1}s_{2}s_{3}s_{2}s_{1} \xrightarrow{(1, \overline{1})} e. 
\end{align*}
\end{exm}

\section{Proofs of Theorems~\ref{thm:IC_first-half} and \ref{thm:IC_second-half}} \label{sec:proof_IC}
We give proofs of our identities of inverse Chevalley type. 

\subsection{First-half case}
First, we consider the first-half case. The following proposition is a key to the proof of the identities. 
\begin{prop} \label{prop:key_first-half}
Let $w \in W$ and $\lambda \in P^{+}$ such that $\lambda + \ve_{k} \in P^{+}$. Then we have 
\begin{equation} \label{eq:identity_C-IC}
\begin{split}
& \sum_{B \in \CA(w, \Gamma_{k}(k))} (-1)^{|B|} \gch V_{\ed(B)t_{\down(B)}}^{-}(\lambda + \ve_{k}) \\ 
&= \sum_{A \in \CA(w, \Theta_{k})} (-1)^{|A|} e^{w\ve_{k}} \gch V_{\ed(A)t_{\down(A)}}^{-}(\lambda). 
\end{split}
\end{equation}
\end{prop}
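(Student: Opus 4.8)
The plan is to prove the identity \eqref{eq:identity_C-IC} by applying the Chevalley-type formula of Proposition~\ref{prop:Chevalley_ek} to the left-hand side and then recognizing the result as the right-hand side. By Corollary~\ref{cor:-ek-chain}, the concatenation $\Gamma_{k-1,k}^{\ast} = \Theta_{k}^{\ast} \ast \Gamma_{k}(k)$ is a reduced $(\vpi_{k-1}-\vpi_{k})$-chain, i.e.\ a reduced $(-\ve_{k})$-chain; dually, the relevant chains $\Gamma_{k}(k)$ and $\Theta_{k}$ should be assembled so that the $w$-admissible subsets on both sides match up. First I would write out Proposition~\ref{prop:Chevalley_ek} explicitly for $\mu = \ve_{k}$ using the chain $\Gamma_{k}(k)$, and observe that the summand $\gch V_{w}^{-}(\lambda+\ve_{k})$ carries the prefactor $(1-q^{-\pair{\lambda+\vpi_{k}}{\alpha_{k}^{\vee}}})^{-1}$, which must be cleared. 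The key combinatorial point is that an admissible subset $B \in \CA(w,\Gamma_{k}(k))$ which \emph{does not} use the middle root $-(k,\overline{k}) = -2\ve_{k}$ (the $\alpha_{k}$-type step) contributes with the simple sign $(-1)^{|B|}$ and no $q$-power after the geometric series is summed, while the reflections through $\alpha_{k}$ are exactly what the prefactor absorbs.

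The main steps, in order, are as follows. First, I would relate the statistics $n(B)$, $\height(B)$, $\wt(B)$ appearing in Proposition~\ref{prop:Chevalley_ek} to the simpler statistics $|B|$ and $\down(B)$ appearing in \eqref{eq:identity_C-IC}; this requires computing the heights $l_{k}$ for the specific chain $\Gamma_{k}(k)$ (using Remark~\ref{rem:common_wall} as a model) and checking that for minuscule $\ve_{k}$ the weight $\wt(B)$ telescopes correctly and $\height(B)$ vanishes on the relevant quantum steps, so that $(-1)^{n(B)}q^{-\height(B)}e^{\wt(B)}$ simplifies to $(-1)^{|B|}$ up to the translation absorbed by Proposition~\ref{prop:gch_translation}. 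Second, I would split each admissible subset for $\Gamma_{k}(k)$ according to whether it passes through the $\alpha_{k}$-edge, matching the non-$\alpha_{k}$ part against admissible subsets for $\Theta_{k}$ on the right-hand side and the $\alpha_{k}$-edge against the geometric-series prefactor. Third, using that $\Gamma_{k}(k)$ arises from $\Gamma_{k-1,k}^{\ast}$ by deleting the initial segment $\Theta_{k}^{\ast}$, I would identify the factor $e^{w\ve_{k}}$ on the right-hand side as the shift in $\wt$ produced by the $\Theta_{k}$ portion of the chain.

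The hard part will be the bookkeeping in the second step: tracking exactly how each $w$-admissible subset of $\Gamma_{k}(k)$ decomposes, and verifying that the translation elements $t_{\down(B)}$, $t_{\down(A)}$ on the two sides agree after the prefactor is cleared and Proposition~\ref{prop:gch_translation} is invoked. In particular, I expect the delicate point to be confirming that the sign $(-1)^{|B|}$ is the correct reduction of $(-1)^{n(B)}$; since $\Gamma_{k}(k)$ consists largely of negative roots (those written with a leading minus sign), $n(B)$ and $|B|$ need not coincide a priori, and reconciling them will rely on the structure of $\QBG(W)$ for type $C_{n}$ together with the minuscule property of $\ve_{k}$. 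Once these identifications are in place, the two sides of \eqref{eq:identity_C-IC} coincide term by term, completing the proof.
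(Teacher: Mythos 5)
Your opening move --- expand each summand $\gch V^{-}_{\ed(B)t_{\down(B)}}(\lambda+\ve_{k})$ on the left-hand side via Proposition~\ref{prop:Chevalley_ek} and then clear the prefactor $(1-q^{-\pair{\lambda+\vpi_{k}}{\alpha_{k}^{\vee}}})^{-1}$ --- is exactly how the paper begins, but the steps you build on it contain genuine errors. First, Proposition~\ref{prop:Chevalley_ek} requires a reduced $\ve_{k}$-chain, and $\Gamma_{k}(k)$ is not one: the reduced $\ve_{k}$-chain is the concatenation $\Gamma_{k-1,k}=\Gamma_{k}^{\ast}(k)\ast\Theta_{k}$ of Lemma~\ref{lem:ek-chain}, whereas $\Gamma_{k}(k)$ is only the tail of the $(-\ve_{k})$-chain of Corollary~\ref{cor:-ek-chain} (the object relevant to the second-half identity, not this one). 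Second, your proposed cancellation mechanism is misaimed: the ``middle root'' $-(k,\overline{k})=-2\ve_{k}$ is not $\alpha_{k}$ unless $k=n$ (here $\alpha_{k}=\ve_{k}-\ve_{k+1}=(k,k+1)$, the \emph{last} entry of $\Gamma_{k}(k)$ up to sign), so splitting $B$ at the middle root has nothing to do with the geometric-series prefactor; and the roots occurring in $\Gamma_{k}(k)$ and in $\Theta_{k}$ are disjoint sets, so there is no way to ``match the non-$\alpha_{k}$ part of $B$ against admissible subsets for $\Theta_{k}$.'' The identity is emphatically not a term-by-term coincidence between the two sides.

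The idea your proposal is missing is the following. After applying Proposition~\ref{prop:gch_translation} and then Proposition~\ref{prop:Chevalley_ek} with the chain $\Gamma_{k-1,k}$, one obtains a double sum over $B\in\CA(w,\Gamma_{k}(k))$ and $A\in\CA(\ed(B),\Gamma_{k-1,k})$. Splitting $A=A^{(1)}\sqcup A^{(2)}$ along the concatenation and computing $n(A)=|A^{(2)}|$, $\wt(A)=\ed(A^{(1)})\ve_{k}$, and $\height(A)=\pair{\ve_{k}}{\down(A^{(1)})}$, the paper constructs a sign-reversing involution on the pairs $(B,A^{(1)})$ --- exploiting that $\Gamma_{k}^{\ast}(k)$ is $\Gamma_{k}(k)$ reversed with all roots negated, so the largest element of $B$ can be traded against the smallest element of $A^{(1)}$ while preserving $\ed(A^{(1)})$ and $\down(B)+\down(A^{(1)})$ --- whose only fixed points are $(\emptyset,\emptyset)$ and $(\{\alpha_{k}\},\{\alpha_{k}\})$. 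These two surviving terms contribute $1$ and $-q^{-\pair{\lambda+\ve_{k}}{\alpha_{k}^{\vee}}}$ respectively, and it is this factor $1-q^{-\pair{\lambda+\ve_{k}}{\alpha_{k}^{\vee}}}$ that cancels the prefactor; what remains is precisely $e^{w\ve_{k}}\sum_{A^{(2)}\in\CA(w,\Theta_{k})}(-1)^{|A^{(2)}|}\gch V^{-}_{\ed(A^{(2)})t_{\down(A^{(2)})}}(\lambda)$. Without this involution (or an equivalent global cancellation argument), the bookkeeping you flag as ``the hard part'' cannot be completed along the lines you describe.
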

\begin{proof}
In this proof, 
\begin{enu}
\item the sequence $\Gamma_{k}(k)$ is of the form: 
\begin{equation*}
\Gamma_{k}(k) = (\beta_{1, \overline{k}}, \ldots, \beta_{k-1, \overline{k}}, \beta_{k, \overline{k+1}}, \ldots, \beta_{k, \overline{n}}, \beta_{k, \overline{k}}, \beta_{k, n}, \ldots, \beta_{k, k+1}), 
\end{equation*}
\item admissible subsets $B \in \CA(v, \Gamma_{k}(k))$ for $v \in W$ are subsets of the (totally ordered) index set 
\begin{equation*}
I_{1} := \{(1, \overline{k}) \vtl \cdots \vtl (k-1, \overline{k}) \vtl (k, \overline{k+1}) \vtl \cdots \vtl (k, \overline{n}) \vtl (k, \overline{k}) \vtl (k, n) \vtl \cdots \vtl (k, k+1)\}; 
\end{equation*}
here, $\vtl$ defines a total order. 
\end{enu}
Similarly, 
\begin{enu}
\item the sequence $\Gamma_{k-1, k}$ is of the form: 
\begin{equation*}
\Gamma_{k-1, k} = (\gamma_{k, k+1}, \ldots, \gamma_{k, n}, \gamma_{k, \overline{k}}, \gamma_{k, \overline{n}}, \ldots, \gamma_{k, \overline{k+1}}, \gamma_{k-1, \overline{k}}, \ldots, \gamma_{1, \overline{k}}, \gamma_{1, k}, \ldots, \gamma_{k-1, k}), 
\end{equation*}
\item admissible subsets $A \in \Gamma(v, \Gamma_{k-1, k})$ for $v \in W$ are subsets of the (totally ordered) index set 
\begin{equation*}
\begin{split}
& I_{2} := \{(k, k+1) \prec \cdots \prec (k, n) \prec (k, \overline{k}) \prec (k, \overline{n}) \prec \cdots \prec (k, \overline{k+1}) \\ 
& \hspace*{60mm} \prec (k-1, \overline{k}) \prec \cdots \prec (1, \overline{k}) \prec (1, k) \prec \cdots \prec (k-1, k)\}; 
\end{split}
\end{equation*}
here, $\prec$ defines a total order. Note that $I_{1} \subset I_{2}$, and that if $\beta \vtl \gamma$, then $\gamma \prec \beta$ for $\beta, \gamma \in I_{1}$.\end{enu}

By Proposition~\ref{prop:Chevalley_ek} and Lemma~\ref{lem:ek-chain}, we deduce that 
\begin{align}
& \text{(LHS of \eqref{eq:identity_C-IC})} \nonumber \\
&= \sum_{B \in \CA(w, \Gamma_{k}(k))} (-1)^{|B|} \gch V_{\ed(B)t_{\down(B)}}^{-}(\lambda + \ve_{k}) \nonumber \\
&= \sum_{B \in \CA(w, \Gamma_{k}(k))} (-1)^{|B|} q^{-\pair{\lambda + \ve_{k}}{\down(B)}} \gch V_{\ed(B)}^{-}(\lambda + \ve_{k}) \nonumber \\ 
\begin{split}
&= \sum_{B \in \CA(w, \Gamma_{k}(k))} (-1)^{|B|} q^{-\pair{\lambda + \ve_{k}}{\down(B)}} \\ 
& \hspace*{10mm} \times \frac{1}{1-q^{-\pair{\lambda+\vpi_{k}}{\alpha_{k}^{\vee}}}} \sum_{A \in \CA(\ed(B), \Gamma_{k-1, k})} (-1)^{n(A)} q^{-\height(A)} e^{\wt(A)} \gch V_{\ed(A)t_{\down(A)}}^{-}(\lambda). 
\end{split} \label{eq:C-IC_1}
\end{align}
Let us take the alcove path
\begin{equation*}
(A_{\circ} = \underbrace{A_{0}, A_{1}, \ldots, A_{a}}_{\Gamma_{k}^{\ast}(k)} = \underbrace{B_{0}, B_{1}, \ldots, B_{b}}_{\Theta_{k}} = A_{-\ve_{k}})
\end{equation*}
corresponding to $\Gamma_{k-1, k} = \Gamma_{k}^{\ast}(k) \ast \Theta_{k}$. 
Then, the hyperplane containing the common wall of $A_{t-1}$ and $A_{t}$, $t = 1, \ldots, a$, is of the form $H_{\beta, 0}$ with $\beta \in \Delta^{+}$. 
Also, the hyperplane containing the common wall of $B_{t-1}$ and $B_{t}$ ($t = 1, \ldots, b$) is of the form $H_{\beta, 1}$ with $\beta \in \Delta^{+}$ (see Remark~\ref{rem:common_wall}). 
This implies that if we divide $A \in \CA(v, \Gamma_{k-1, k})$ into the two parts: 
$A^{(1)} := A \cap \{ (k, k+1), \ldots, (1, \overline{k}) \} \ (\in \CA(v, \Gamma_{k}^{\ast}(k)))$ and $A^{(2)} := A \cap \{ (1, k), \ldots, (k-1, k) \} \ (\in \CA(\ed(A^{(1)}), \Theta_{k}))$, 
then we have 
\begin{align*}
\height(A) &= \sum_{a \in A^{-} \cap A^{(1)}} \pair{\ve_{k}}{\gamma_{a}^{\vee}} \\ 
&= \bigpair{\ve_{k}}{\sum_{a \in A^{-} \cap A^{(1)}} \gamma_{a}^{\vee}} \\ 
&= \pair{\ve_{k}}{\down(A^{(1)})}. 
\end{align*}
In addition, we have $\wt(A) = \ed(A^{(1)})\ve_{k}$. 
Also, since all the roots in $\Gamma_{k}^{\ast}(k)$ are positive roots, while those in $\Theta_{k}$ are negative roots, we see that $n(A) = |A^{(2)}|$. 
Therefore, we see that
\begin{align}
\begin{split}
& \eqref{eq:C-IC_1} \\
&= \sum_{B \in \CA(w, \Gamma_{k}(k))} (-1)^{|B|} q^{-\pair{\lambda + \ve_{k}}{\down(B)}} \\ 
& \quad \times \frac{1}{1-q^{-\pair{\lambda+\vpi_{k}}{\alpha_{k}^{\vee}}}} \sum_{A \in \CA(\ed(B), \Gamma_{k-1, k})} (-1)^{n(A)} q^{-\pair{\ve_{k}}{\down(A^{(1)})}} e^{\ed(A^{(1)}) \ve_{k}} \gch V_{\ed(A)t_{\down(A)}}^{-}(\lambda) 
\end{split} \nonumber \\ 
\begin{split}
&= \frac{1}{1-q^{-\pair{\lambda+\vpi_{k}}{\alpha_{k}^{\vee}}}} \sum_{B \in \CA(w, \Gamma_{k}(k))} \sum_{A \in \CA(\ed(B), \Gamma_{k-1, k})} (-1)^{|B|} (-1)^{|A^{(2)}|} \\ 
& \hspace*{40mm} \times q^{-\pair{\lambda + \ve_{k}}{\down(B)}} e^{\ed(A^{(1)})\ve_{k}} q^{-\pair{\lambda+\ve_{k}}{\down(A^{(1)})}} \gch V_{\ed(A)t_{\down(A^{(2)})}}^{-}(\lambda)
\end{split} \nonumber \\ 
\begin{split}
&= \frac{1}{1-q^{-\pair{\lambda+\vpi_{k}}{\alpha_{k}^{\vee}}}} \sum_{B \in \CA(w, \Gamma_{k}(k))} \sum_{A \in \CA(\ed(B), \Gamma_{k-1, k})} (-1)^{|B|}(-1)^{|A^{(2)}|} \\ 
& \hspace*{40mm} \times q^{-\pair{\lambda+\ve_{k}}{\down(B)+\down(A^{(1)})}} e^{\ed(A^{(1)})\ve_{k}} \gch V_{\ed(A^{(2)})t_{\down(A^{(2)})}}^{-}(\lambda) 
\end{split} \nonumber \\ 
\begin{split}
&= \frac{1}{1-q^{-\pair{\lambda+\vpi_{k}}{\alpha_{k}^{\vee}}}} \sum_{B \in \CA(w, \Gamma_{k}(k))} \sum_{A^{(1)} \in \CA(\ed(B), \Gamma_{k}^{\ast}(k))} (-1)^{|B|} q^{-\pair{\lambda+\ve_{k}}{\down(B)+\down(A^{(1)})}} \\ 
& \hspace*{40mm} \times e^{\ed(A^{(1)})\ve_{k}} \sum_{A^{(2)} \in \CA(\ed(A^{(1)}), \Theta_{k})} (-1)^{|A^{(2)}|} \gch V_{\ed(A^{(2)})t_{\down(A^{(2)})}}^{-}(\lambda). 
\end{split} \label{eq:C-IC_2}
\end{align}

Now, we define an involution on the set 
\begin{equation*}
\CP := \{ (B, A^{(1)}) \mid B \in \CA(w, \Gamma_{k}(k)), \ A^{(1)} \in \CA(\ed(B), \Gamma_{k}^{\ast}(k)) \}. 
\end{equation*}
There are the following six cases: 
\begin{enu}
\item {[$\max B \prec \min A^{(1)}$]} or {[$B \not= \emptyset$ and $A^{(1)} = \emptyset$]}, 
\item {[$\max B \succ \min A^{(1)}$]} or {[$B = \emptyset$ and $A^{(1)} \not= \emptyset$]}, 
\item $\max B = \min A^{(1)} = (k, k+1)$, and one of the following holds: 
\begin{itemize}
\item $\max (B \setminus \{ (k, k+1) \}) \prec \min (A^{(1)} \setminus \{ (k, k+1) \})$ or 
\item $B \setminus \{ (k, k+1) \} \not= \emptyset$ and $A^{(1)} = \{ (k, k+1) \}$,
\end{itemize} 
\item $\max B = \min A^{(1)} = (k, k+1)$, and one of the following holds: 
\begin{itemize}
\item $\max (B \setminus \{ (k, k+1) \}) \succ \min (A^{(1)} \setminus \{ (k, k+1) \})$ or 
\item $B = \{ (k, k+1) \}$ and $A^{(1)} \setminus \{ (k, k+1) \} \not= \emptyset$, 
\end{itemize}
\item $B = A^{(1)} = \{ (k, k+1) \}$, 
\item $B = A^{(1)} = \emptyset$. 
\end{enu}
Here we remark that, if we have a directed path $v \xrightarrow{\alpha} vs_{\alpha} \xrightarrow{\alpha} v$ in $\QBG(W)$ for $v \in W$ and $\alpha \in \Delta^{+}$, then $\alpha$ must be a simple root. Conversely, for $v \in W$ and a simple root $\alpha$, $v \xrightarrow{\alpha} vs_{\alpha} \xrightarrow{\alpha} v$ is a directed path in $\QBG(W)$. 
For $(B, A^{(1)}) \in \CP$, we define $\iota(B, A^{(1)}) = (B', A'^{(1)}) \in \CP$ as follows: 
\begin{itemize}
\item if $(B, A^{(1)})$ satisfies (1) above, then set 
\begin{equation*}
B' := B \setminus \{ \max B \}, \quad A'^{(1)} := A^{(1)} \sqcup \{ \max B \}; 
\end{equation*}
\item if $(B, A^{(1)})$ satisfies (2) above, then set 
\begin{equation*}
B' := B \sqcup \{ \min A^{(1)} \}, \quad A'^{(1)} := A^{(1)} \setminus \{ \min A^{(1)} \}; 
\end{equation*}
\item if $(B, A^{(1)})$ satisfies (3) above, then set 
\begin{equation*}
B' := B \setminus \{ \max (B \setminus \{ (k, k+1) \}) \}, \quad A'^{(1)} := A' \sqcup \{ \max (B \setminus \{ (k, k+1) \}; 
\end{equation*}
\item if $(B, A^{(1)})$ satisfies (4) above, then set 
\begin{equation*}
B' := B \sqcup \{ \min (A'^{(1)} \setminus \{ (k, k+1) \}, \quad A'^{(1)} := A' \setminus \{ \min (A'^{(1)} \setminus \{ (k, k+1) \}) \}; 
\end{equation*}
\item if $(B, A^{(1)})$ satisfies (5) or (6) above, then set 
\begin{equation*}
B' := B, \quad A'^{(1)} := A^{(1)}. 
\end{equation*}
\end{itemize}
It is clear that $\iota$ defines an involution on $\CP$. 
Moreover, in cases (1) and (3) (resp., (2) and (4)), we have 
\begin{itemize}
\item $|B'| = |B| - 1$ (resp., $|B'| = |B| + 1$), 
\item $\down(B') + \down(A'^{(1)}) = \down(B) + \down(A^{(1)})$, and 
\item $\ed(A'^{(1)}) = \ed(A^{(1)})$. 
\end{itemize}
This implies that 
\begin{align*}
& \sum_{\substack{(B, A^{(1)}) \in \CP \\ \text{$(B, A^{(1)})$ satisfies one of (1)--(4)}}} (-1)^{|B|} q^{-\pair{\lambda+\ve_{k}}{\down(B)+\down(A^{(1)})}} e^{\ed(A^{(1)})\ve_{k}} \\ 
& \hspace*{40mm} \times \sum_{A^{(2)} \in \CA(\ed(A^{(1)}), \Theta_{k})} (-1)^{|A^{(2)}|} \gch V_{\ed(A^{(2)})t_{\down(A^{(2)})}}^{-}(\lambda) = 0. 
\end{align*}
Therefore, we conclude that 
\begin{align*}
\begin{split}
\eqref{eq:C-IC_2} &= \frac{1}{1-q^{-\pair{\lambda+\vpi_{k}}{\alpha_{k}^{\vee}}}} \left( \underbrace{-q^{-\pair{\lambda+\ve_{k}}{\alpha_{k}^{\vee}}} e^{w\ve_{k}} \sum_{A^{(2)} \in \CA(w, \Theta_{k})} (-1)^{|A^{(2)}|} \gch V_{\ed(A^{(2)})t_{\down(A^{(2)})}}^{-}(\lambda)}_{B = A^{(1)} = \{ (k, k+1) \}} \right. \\ 
& \hspace*{60mm} \left. + \underbrace{e^{w\ve_{k}} \sum_{A^{(2)} \in \CA(w, \Theta_{k})} (-1)^{|A^{(2)}|} \gch V_{\ed(A^{(2)})t_{\down(A^{(2)})}}^{-}(\lambda)}_{B = A^{(1)} = \emptyset} \right) 
\end{split} \displaybreak[1] \\ 
&= \frac{1-q^{-\pair{\lambda+\ve_{k}}{\alpha_{k}^{\vee}}}}{1-q^{-\pair{\lambda+\vpi_{k}}{\alpha_{k}^{\vee}}}} e^{w\ve_{k}} \sum_{A^{(2)} \in \CA(w, \Theta_{k})} (-1)^{|A^{(2)}|} \gch V_{\ed(A^{(2)})t_{\down(A^{(2)})}}^{-}(\lambda) \\ 
&= e^{w\ve_{k}} \sum_{A^{(2)} \in \CA(w, \Theta_{k})} (-1)^{|A^{(2)}|} \gch V_{\ed(A^{(2)})t_{\down(A^{(2)})}}^{-}(\lambda) \\ 
&= (\text{RHS of \eqref{eq:identity_C-IC}}), 
\end{align*}
as desired; for the third equality, we have used that $\pair{\ve_{k}}{\alpha_{k}^{\vee}} = \pair{\vpi_{k}}{\alpha_{k}^{\vee}} = 1$. This completes the proof of the proposition. 
\end{proof}

\begin{proof}[Proof of Theorem~\ref{thm:IC_first-half}]
We prove the assertion of the theorem by induction on $m = 1, \ldots, n$. 
If $m = 1$, then the assertion immediately follows from Proposition~\ref{prop:key_first-half} since $\CA(w, \Theta_{1}) = \{ \emptyset \}$. 
Let $1 < l \le n$, and assume the assertion for $m = 1, \ldots, l-1$. We will prove the assertion for $m = l$. 
Note that for $A \in \CA(w, \Theta_{l}) \setminus \{ \emptyset \}$, if the index $k$ satisfies $\ed(A)^{-1} w\ve_{l} = \ve_{k}$, then we have $1 \le k \le l-1$. Therefore, by Proposition~\ref{prop:key_first-half}, we see that 
\begin{align*}
& e^{w\ve_{l}} \gch V_{w}^{-}(\lambda) \\ 
\begin{split}
&= \sum_{B \in \CA(w, \Gamma_{l}(l))} (-1)^{|B|} \gch V_{\ed(B)t_{\down(B)}}^{-}(\lambda + \ve_{l}) \\ 
& \quad - \sum_{A \in \CA(w, \Theta_{l}) \setminus \{ \emptyset \}} (-1)^{|A|} e^{w\ve_{l}} \gch V_{\ed(A)t_{\down(A)}}^{-}(\lambda) 
\end{split} \\
\begin{split}
&= \sum_{B \in \CA(w, \Gamma_{l}(l))} (-1)^{|B|} \gch V_{\ed(B)t_{\down(B)}}^{-}(\lambda + \ve_{l}) \\ 
& \quad - \sum_{A \in \CA(w, \Theta_{l}) \setminus \{ \emptyset \}} (-1)^{|A|} q^{-\pair{\lambda}{\down(A)}} e^{w\ve_{l}} \gch V_{\ed(A)}^{-}(\lambda) 
\end{split} \\ 
\begin{split}
&= \sum_{B \in \CA(w, \Gamma_{l}(l))} (-1)^{|B|} \gch V_{\ed(B)t_{\down(B)}}^{-}(\lambda + \ve_{l}) \\ 
& \quad + \sum_{k = 1}^{l-1} \sum_{\substack{A \in \CA(w, \Theta_{l}) \setminus \{ \emptyset \} \\ \ed(A)^{-1}w\ve_{l} = \ve_{k}}} (-1)^{|A|-1} q^{-\pair{\lambda}{\down(A)}} \underbrace{e^{w\ve_{l}} \gch V_{\ed(A)}^{-}(\lambda)}_{\text{induction hypothesis}} 
\end{split} \\ 
\begin{split}
&= \sum_{B \in \CA(w, \Gamma_{l}(l))} (-1)^{|B|} \gch V_{\ed(B)t_{\down(B)}}^{-}(\lambda + \ve_{l}) \\ 
& \quad + \sum_{k = 1}^{l-1} \sum_{A \in \CA_{w}^{l, k}} (-1)^{|A|-1} q^{-\pair{\lambda}{\down(A)}} \\ 
& \quad \times \left( \sum_{B \in \CA(\ed(A), \Gamma_{k}(k))} (-1)^{|B|} \gch V_{\ed(B)t_{\down(B)}}^{-}(\lambda + \ve_{k}) \right. \\ 
& \quad \quad \quad + \sum_{j = 1}^{k-1} \sum_{(j_{1}, \ldots, j_{r}) \in \CS_{k, j}} \sum_{A_{1} \in \CA_{\ed(A)}^{k, j_{1}}} \cdots \sum_{A_{r} \in \CA_{\ed(A_{r-1})}^{j_{r-1}, j_{r}}} (-1)^{|A_{1}|+\cdots + |A_{r}|-r} q^{\pair{\ve_{j}}{\down(A_{1}, \ldots, A_{r})}} \\ 
& \left. \quad \quad \quad \times \sum_{B \in \CA(\ed(A_{r}), \Gamma_{j}(j))} (-1)^{|B|} \gch V_{\ed(B)t_{\down(A_{1}, \ldots, A_{r}, B)}}^{-}(\lambda+\ve_{j}) \right)
\end{split} \displaybreak[1] \\ 
\begin{split}
&= \sum_{B \in \CA(w, \Gamma_{l}(l))} (-1)^{|B|} \gch V_{\ed(B)t_{\down(B)}}^{-}(\lambda + \ve_{l}) \\ 
& \quad + \sum_{k = 1}^{l-1} \sum_{A \in \CA_{w}^{l, k}} (-1)^{|A|-1} \\ 
& \quad \times \left( \sum_{B \in \CA(\ed(A), \Gamma_{k}(k))} (-1)^{|B|} q^{\pair{\ve_{k}}{\down(A)}} \gch V_{\ed(B)t_{\down(B)+\down(A)}}^{-}(\lambda + \ve_{k}) \right. \\ 
& \quad \quad \quad + \sum_{j = 1}^{k-1} \sum_{(j_{1}, \ldots, j_{r}) \in \CS_{k, j}} \sum_{A_{1} \in \CA_{\ed(A)}^{k, j_{1}}} \cdots \sum_{A_{r} \in \CA_{\ed(A_{r-1})}^{j_{r-1}, j_{r}}} (-1)^{|A_{1}|+\cdots + |A_{r}|-r} q^{\pair{\ve_{j}}{\down(A_{1}, \ldots, A_{r})}} \\ 
& \left. \quad \quad \quad \times \sum_{B \in \CA(\ed(A_{r}), \Gamma_{j}(j))} (-1)^{|B|} q^{\pair{\ve_{j}}{\down(A)}} \gch V_{\ed(B)t_{\down(A, A_{1}, \ldots, A_{r}, B)}}^{-}(\lambda+\ve_{j}) \right)
\end{split} \\ 
\begin{split}
&= \sum_{B \in \CA(w, \Gamma_{l}(l))} (-1)^{|B|} \gch V_{\ed(B)t_{\down(B)}}^{-}(\lambda + \ve_{l}) \\ 
& \quad + \sum_{k = 1}^{l-1} \sum_{(j_{1}, \ldots, j_{r}) \in \CS_{l, j}} \sum_{A_{1} \in \CA_{w}^{l, j_{1}}} \cdots \sum_{A_{r} \in \CA_{\ed(A_{r-1})}^{j_{r-1}, j_{r}}} (-1)^{|A_{1}|+\cdots+|A_{r}|-r} q^{\pair{\ve_{j}}{\down(A_{1}, \ldots, A_{r})}} \\ 
& \quad \times \sum_{B \in \CA(\ed(A_{r}), \Gamma_{j}(j))} (-1)^{|B|} \gch V_{\ed(B)t_{\down(A_{1}, \ldots, A_{r}, B)}}^{-}(\lambda + \ve_{j}), 
\end{split}
\end{align*}
as desired. Thus, the assertion also holds for $m = l$. 
This proves the theorem. 
\end{proof}

\subsection{Second-half case}
The following proposition is a key to the proof of the second half of our identities of inverse Chevalley type. 
\begin{prop} \label{prop:key_second-half}
Let $w \in W$, $\lambda \in P^{+}$ such that $\lambda - \ve_{k} \in P^{+}$. Then we have 
\begin{equation*}
\begin{split}
& \sum_{B \in \CA(w, \Theta_{k})} (-1)^{|B|} \gch V_{\ed(B)t_{\down(B)}}^{-}(\lambda - \ve_{k}) \\ 
& = \sum_{A \in \CA(w, \Gamma_{k}(k))} (-1)^{|A|} e^{-w\ve_{k}} \gch V_{\ed(A)t_{\down(A)}}^{-}(\lambda). 
\end{split}
\end{equation*}
\end{prop}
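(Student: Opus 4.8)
The plan is to obtain this proposition essentially for free from its ``first-half'' counterpart, Proposition~\ref{prop:key_first-half}, rather than rerunning the involution argument. The point is that the two identities are formally inverse under the shift $\lambda \mapsto \lambda - \ve_k$ together with multiplication by the invertible character $e^{-w\ve_k}$. Both statements involve exactly the same chains $\Gamma_k(k)$ and $\Theta_k$ and the same admissible-subset statistics $\ed(\cdot)$, $\down(\cdot)$, $|\cdot|$, so no new combinatorial input is needed.

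First I would check that the hypotheses match. Proposition~\ref{prop:key_first-half} applies to any $\nu \in P^+$ with $\nu + \ve_k \in P^+$; taking $\nu := \lambda - \ve_k$, these two conditions read $\lambda - \ve_k \in P^+$ and $\lambda \in P^+$, which are precisely the assumptions made here. Hence Proposition~\ref{prop:key_first-half}, applied with $\lambda$ replaced by $\lambda - \ve_k$, gives
\[
\sum_{B \in \CA(w, \Gamma_k(k))} (-1)^{|B|} \gch V_{\ed(B)t_{\down(B)}}^{-}(\lambda) = \sum_{A \in \CA(w, \Theta_k)} (-1)^{|A|} e^{w\ve_k} \gch V_{\ed(A)t_{\down(A)}}^{-}(\lambda - \ve_k).
\]
Multiplying both sides by $e^{-w\ve_k}$, which is a unit in the coefficient ring $\BZ[P]\pra{q^{-1}}$, cancels $e^{-w\ve_k}e^{w\ve_k}=1$ on the right; after renaming the summation indices $A \leftrightarrow B$ this is exactly the claimed identity. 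Along this route I expect no genuine obstacle: the only things to verify are the matching of the dominance hypotheses under the shift and the invertibility of the character $e^{-w\ve_k}$.

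If instead a self-contained proof mirroring the first-half case is wanted, I would start from the right-hand side, strip the translation $t_{\down(A)}$ using Proposition~\ref{prop:gch_translation}, and expand $\gch V^-(\lambda) = \gch V^-((\lambda-\ve_k)+\ve_k)$ by the Chevalley formula of Proposition~\ref{prop:Chevalley_ek} with base weight $\lambda-\ve_k$ and the reduced $\ve_k$-chain $\Gamma_{k-1,k} = \Gamma_k^\ast(k) \ast \Theta_k$ of Lemma~\ref{lem:ek-chain}. Splitting each Chevalley-admissible subset into its $\Gamma_k^\ast(k)$-part and $\Theta_k$-part and building the same involution as in the first-half proof on the pairs consisting of an outer $\Gamma_k(k)$-subset and an inner $\Gamma_k^\ast(k)$-subset, all terms cancel except the two boundary contributions; using $\pair{\ve_k}{\alpha_k^\vee} = \pair{\vpi_k}{\alpha_k^\vee} = 1$ their prefactors combine to cancel the rational factor $1/(1-q^{-\pair{\lambda}{\alpha_k^\vee}})$ and leave the $\Theta_k$-sum, which is the left-hand side. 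Since this computation is verbatim the first-half argument with $\lambda$ replaced by $\lambda-\ve_k$ and an extra factor $e^{-w\ve_k}$, its only hard part is the same involution-and-cancellation bookkeeping that the substitution argument above bypasses entirely.
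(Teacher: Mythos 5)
Your primary argument---substituting $\lambda \mapsto \lambda - \ve_{k}$ into Proposition~\ref{prop:key_first-half} and multiplying both sides by the unit $e^{-w\ve_{k}}$---is correct and is exactly the proof given in the paper, including the matching of the dominance hypotheses. The alternative self-contained route you sketch is not needed.
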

\begin{proof}
By replacing $\lambda$ in equation \eqref{eq:identity_C-IC} with $\lambda - \ve_{k}$ and multiplying both sides of equation \eqref{eq:identity_C-IC} by $e^{-w\ve_{k}}$, we obtain the desired identity. 
\end{proof}

\begin{proof}[Proof of Theorem~\ref{thm:IC_second-half}]
We prove the assertion of the theorem by downward induction on $m = n, n-1, \ldots, 1$. First, assume that $m = n$. Then, by Proposition~\ref{prop:key_second-half} and Corollary~\ref{cor:IC_first-half_complete}, we see that 
\begin{align*}
& e^{-w\ve_{m}} \gch V_{w}^{-}(\lambda) \\ 
\begin{split}
&= \sum_{B \in \CA(w, \Theta_{n})} (-1)^{|B|} \gch V_{\ed(B)t_{\down(B)}}^{-}(\lambda - \ve_{n}) \\ 
& \quad - \sum_{A \in \CA(w, \Gamma_{n}(n)) \setminus \{ \emptyset \}} (-1)^{|A|} e^{-w\ve_{n}} \gch V_{\ed(A)t_{\down(A)}}^{-}(\lambda)
\end{split} \\ 
\begin{split}
&= \sum_{B \in \CA(w, \Theta_{n})} (-1)^{|B|} \gch V_{\ed(B)t_{\down(B)}}^{-}(\lambda - \ve_{n}) \\ 
& \quad + \sum_{k = 1}^{n} \sum_{A \in \CA_{w}^{\overline{n}, k}} (-1)^{|A|-1} e^{-w\ve_{n}} \gch V_{\ed(A)t_{\down(A)}}^{-}(\lambda)
\end{split} \displaybreak[1] \\ 
\begin{split}
&= \sum_{B \in \CA(w, \Theta_{n})} (-1)^{|B|} \gch V_{\ed(B)t_{\down(B)}}^{-}(\lambda - \ve_{n}) \\ 
& \quad + \sum_{k = 1}^{n} \sum_{A \in \CA_{w}^{\overline{n}, k}} (-1)^{|A|-1} e^{\ed(A)\ve_{k}} \gch V_{\ed(A)t_{\down(A)}}^{-}(\lambda)
\end{split} \\ 
\begin{split}
&= \sum_{B \in \CA(w, \Theta_{n})} (-1)^{|B|} \gch V_{\ed(B)t_{\down(B)}}^{-}(\lambda - \ve_{n}) \\ 
& \quad + \sum_{k = 1}^{n} \sum_{A \in \CA_{w}^{\overline{n}, k}} (-1)^{|A|-1} \left( q^{\pair{\ve_{k}}{\down(A)}} \sum_{B \in \CA(\ed(A), \Gamma_{k}(k))} (-1)^{|B|} \gch V_{\ed(B)t_{\down(B)}}^{-}(\lambda + \ve_{k}) \right. \\ 
& \quad + \sum_{j = 1}^{k-1} \sum_{(j_{1}, \ldots, j_{r}) \in \CS_{k, j}} \sum_{A_{1} \in \CA_{\ed(A)}^{k, j_{1}}} \cdots \sum_{A_{r} \in \CA_{\ed(A_{r-1})}^{j_{r-1}, j_{r}}} (-1)^{|A_{1}|+\cdots+|A_{r}|-r} q^{\pair{\ve_{j}}{\down(A, A_{1}, \ldots, A_{r})}} \\ 
& \hspace*{30mm} \left. \times \sum_{B \in \CA(\ed(A_{r}), \Gamma_{j}(j))} (-1)^{|B|} \gch V_{\ed(B)t_{\down(A, A_{1}, \ldots, A_{r}, B)}}^{-}(\lambda + \ve_{j}) \right) 
\end{split} \\ 
\begin{split}
&= \sum_{B \in \CA(w, \Theta_{n})} (-1)^{|B|} \gch V_{\ed(B)t_{\down(B)}}^{-}(\lambda - \ve_{n}) \\ 
& \quad + \sum_{j = 1}^{n} \sum_{(j_{1}, \ldots, j_{r}) \in \CS_{\overline{n}, j}} \sum_{A_{1} \in \CA_{w}^{\overline{n}, j_{1}}} \cdots \sum_{A_{r} \in \CA_{\ed(A_{r-1})}^{j_{r-1}, j_{r}}} (-1)^{|A_{1}|+\cdots+|A_{r}|-r} q^{\pair{\ve_{j}}{\down(A_{1}, \ldots, A_{r})}} \\ 
& \hspace*{30mm} \times \sum_{B \in \CA(\ed(A_{r}), \Gamma_{j}(j))} (-1)^{|B|} \gch V_{\ed(B)t_{\down(A_{1}, \ldots, A_{r}, B)}}^{-}(\lambda + \ve_{j}), 
\end{split} 
\end{align*}
as desired. 
Let $1 \le l < n$, and assume the assertion for $m = n, n-1, \ldots, l+1$. We prove the assertion for $m = l$. For $A \in \CA(w, \Gamma_{l}(l)) \setminus \{ \emptyset \}$, if the index $k$ satisfies $\ed(A)^{-1}w(-\ve_{l}) = \ve_{k}$, then we have $k = 1, \ldots, n-1, n, \overline{n}, \overline{n-1}, \ldots, \overline{l+1}$. Therefore, by Proposition~\ref{prop:key_second-half} (and Proposition~\ref{prop:gch_translation}), we compute 
\begin{align*}
& e^{-w\ve_{l}} \gch V_{w}^{-}(\lambda) \\ 
\begin{split}
&= \sum_{B \in \CA(w, \Theta_{l})} (-1)^{|B|} \gch V_{\ed(B)t_{\down(B)}}^{-}(\lambda - \ve_{l}) \\ 
& \quad - \sum_{A \in \CA(w, \Gamma_{l}(l)) \setminus \{ \emptyset \}} (-1)^{|A|} e^{-w\ve_{l}} \gch V_{\ed(A)t_{\down(A)}}^{-}(\lambda) 
\end{split} \\ 
\begin{split}
&= \sum_{B \in \CA(w, \Theta_{l})} (-1)^{|B|} \gch V_{\ed(B)t_{\down(B)}}^{-}(\lambda - \ve_{l}) \\ 
& \quad + \sum_{k = l+1}^{n} \sum_{A \in \CA_{w}^{\overline{l}, \overline{k}}} (-1)^{|A|-1} e^{-w\ve_{l}} \gch V_{\ed(A)t_{\down(A)}}^{-}(\lambda) \\ 
& \quad + \sum_{k = 1}^{n} \sum_{A \in \CA_{w}^{\overline{l}, k}} (-1)^{|A|-1} e^{-w\ve_{l}} \gch V_{\ed(A)t_{\down(A)}}^{-}(\lambda) 
\end{split} \displaybreak[1] \\ 
\begin{split}
&= \sum_{B \in \CA(w, \Theta_{l})} (-1)^{|B|} \gch V_{\ed(B)t_{\down(B)}}^{-}(\lambda - \ve_{l}) \\ 
& \quad + \sum_{k = l+1}^{n} \sum_{A \in \CA_{w}^{\overline{l}, \overline{k}}} (-1)^{|A|-1} q^{-\pair{\lambda}{\down(A)}} \underbrace{e^{-\ed(A)\ve_{k}} \gch V_{\ed(A)}^{-}(\lambda)}_{\text{induction hypothesis}} \\ 
& \quad + \sum_{k = 1}^{n} \sum_{A \in \CA_{w}^{\overline{l}, k}} (-1)^{|A|-1} \underbrace{e^{\ed(A)\ve_{k}} \gch V_{\ed(A)t_{\down(A)}}^{-}(\lambda)}_{\text{Corollary~\ref{cor:IC_first-half_complete}}} 
\end{split} \\ 
\begin{split}
&= \sum_{B \in \CA(w, \Theta_{l})} (-1)^{|B|} \gch V_{\ed(B)t_{\down(B)}}^{-}(\lambda - \ve_{l}) \\ 
& \quad + \sum_{k = l+1}^{n} \sum_{A \in \CA_{w}^{\overline{l}, \overline{k}}} (-1)^{|A|-1} q^{-\pair{\lambda}{\down(A)}} \left( \sum_{B \in \CA(\ed(A), \Theta_{k})} (-1)^{|B|} \gch V_{\ed(B)t_{\down(B)}}^{-}(\lambda - \ve_{k}) \right. \\ 
& \quad + \sum_{j = k+1}^{n} \sum_{(j_{1}, \ldots, j_{r}) \in \CS_{\overline{k}, \overline{j}}} \sum_{A_{1} \in \CA_{\ed(A)}^{\overline{k}, j_{1}}} \cdots \sum_{A \in \CA_{\ed(A_{r-1})}^{j_{r-1}, j_{r}}} (-1)^{|A_{1}|+\cdots+|A_{r}|-r} q^{-\pair{\ve_{j}}{\down(A_{1}, \ldots, A_{r})}} \\ 
& \quad \times \sum_{B \in \CA(\ed(A_{r}), \Theta_{j})} (-1)^{|B|} \gch V_{\ed(B)t_{\down(A_{1}, \ldots, A_{r}, B)}}^{-}(\lambda - \ve_{j}) \\ 
& \quad + \sum_{j = 1}^{n} \sum_{(j_{1}, \ldots, j_{r}) \in \CS_{\overline{k}, j}} \sum_{A_{1} \in \CA_{\ed(A)}^{\overline{k}, j_{1}}} \cdots \sum_{A_{r} \in \CA_{\ed(A_{r-1})}^{j_{r-1}, j_{r}}} (-1)^{|A_{1}|+\cdots+|A_{r}|-r} q^{\pair{\ve_{j}}{\down(A_{1}, \ldots, A_{r})}} \\ 
& \quad \left. \times \sum_{B \in \CA(\ed(A_{r}), \Gamma_{j}(j))} (-1)^{|B|} \gch V_{\ed(B)t_{\down(A_{1}, \ldots, A_{r}, B)}}^{-}(\lambda + \ve_{j}) \right) \\ 
& \quad + \sum_{k = 1}^{n} \sum_{A \in \CA_{w}^{\overline{l}, k}} (-1)^{|A|-1} \left( q^{\pair{\ve_{k}}{\down(A)}} \sum_{B \in \CA(\ed(A), \Gamma_{k}(k))} (-1)^{|B|} \gch V_{\ed(B)t_{\down(B)+\down(A)}}^{-}(\lambda + \ve_{k}) \right. \\ 
& \quad + \sum_{j = 1}^{k-1} \sum_{(j_{1}, \ldots, j_{r}) \in \CS_{k, j}} \sum_{A_{1} \in \CA_{\ed(A)}^{k, j_{1}}} \cdots \sum_{A_{r} \in \CA_{\ed(A_{r-1})}^{j_{r-1}, j_{r}}} (-1)^{|A_{1}|+\cdots+|A_{r}|-r} q^{\pair{\ve_{j}}{\down(A, A_{1}, \ldots, A_{r})}} \\ 
& \quad \left. \times \sum_{B \in \CA(\ed(A_{r}), \Gamma_{j}(j))} (-1)^{|B|} \gch V_{\ed(B)t_{\down(A, A_{1}, \ldots, A_{r}, B)}}^{-}(\lambda + \ve_{j}) \right) 
\end{split} \\ 
\begin{split}
&= \sum_{B \in \CA(w, \Theta_{l})} (-1)^{|B|} \gch V_{\ed(B)t_{\down(B)}}^{-}(\lambda - \ve_{l}) \\ 
& \quad + \sum_{j = l+1}^{n} \sum_{(j_{1}, \ldots, j_{r}) \in \CS_{\overline{l}, \overline{j}}} \sum_{A_{1} \in \CA_{w}^{\overline{l}, j_{1}}} \cdots \sum_{A_{r} \in \CA_{\ed(A_{r-1})}^{j_{r-1}, j_{r}}} (-1)^{|A_{1}|+\cdots+|A_{r}|-r} q^{-\pair{\ve_{j}}{\down(A_{1}, \ldots, A_{r})}} \\ 
& \quad \times \sum_{B \in \CA(\ed(A_{r}), \Theta_{j})} (-1)^{|B|} \gch V_{\ed(B)t_{\down(A_{1}, \ldots, A_{r}, B)}}^{-}(\lambda - \ve_{j}) \\ 
& \quad + \sum_{j = 1}^{n} \sum_{(j_{1}, \ldots, j_{r}) \in \CS_{\overline{l}, j}} \sum_{A_{1} \in \CA_{w}^{\overline{l}, j_{1}}} \cdots \sum_{A_{r-1} \in \CA_{\ed(A_{r-1})}^{j_{r-1}, j_{r}}} (-1)^{|A_{1}|+\cdots+|A_{r}|-r} q^{\pair{\ve_{j}}{\down(A_{1}, \ldots, A_{r})}} \\ 
& \quad \times \sum_{B \in \CA(\ed(A_{r}), \Gamma_{j}(j))} (-1)^{|B|} \gch V_{\ed(B)t_{\down(A_{1}, \ldots, A_{r}, B)}}^{-}(\lambda + \ve_{j}), 
\end{split}
\end{align*}
as desired. By downward induction, this completes the proof of the theorem. 
\end{proof}

\section{Proof of Theorem~\ref{thm:IC_cancellation-free_first-half}} \label{sec:cancellation-free_proof}
We will derive the cancellation-free form of the first-half identities of inverse Chevalley type (Theorem~\ref{thm:IC_cancellation-free_first-half}). 
For this purpose, we need the following lemmas on edges of the quantum Bruhat graph. 
We continue to assume that $\Fg$ is of type $C_{n}$. 
Recall that a total order $<$ on $[\overline{n}]$ is defined by: $1 < 2 < \cdots < n < \overline{n} < \overline{n-1} < \cdots < \overline{1}$; for each $1 \le k \le n$, we define an order $\prec_{k}$ (resp., $\prec_{\overline{k}}$) on $[\overline{n}]$ by: $k \prec_{k} k+1 \prec_{k} \cdots \prec_{k} n \prec_{k} \overline{n} \prec_{k} \overline{n-1} \prec_{k} \cdots \prec_{k} \overline{1} \prec_{k} 1 \prec_{k} 2 \prec_{k} \cdots \prec_{k} k-1$ (resp., $\overline{k} \prec_{\overline{k}} \overline{k-1} \prec_{\overline{k}} \cdots \prec_{\overline{k}} \overline{1} \prec_{\overline{k}} 1 \prec_{\overline{k}} 2 \prec_{\overline{k}} \cdots \prec_{\overline{k}} n \prec_{\overline{k}} \overline{n} \prec_{\overline{k}} \overline{n-1} \prec_{\overline{k}} \cdots \prec_{\overline{k}} \overline{k+1}$). For $a_{1}, \ldots, a_{r} \in [\overline{n}]$ with $r \ge 2$, we write $a_{1} \prec a_{2} \prec \cdots \prec a_{r}$ if $a_{1} \prec_{a_{1}} a_{2} \prec_{a_{1}} \cdots \prec_{a_{1}} a_{r}$ (the order $\prec$ is different from that introduced in the proof of Proposition~\ref{prop:key_first-half}). 
Also, on the set $[\overline{n}]$, we define the sign function $\sgn (\cdot)$: for $a \in [\overline{n}]$, we set 
\begin{equation*}
\sgn(a) := \begin{cases} 1 & \text{if $a = 1, 2, \ldots, n$, } \\ -1 & \text{if $a = \overline{n}, \overline{n-1}, \ldots, \overline{1}$. } \end{cases}
\end{equation*}
We know the following useful criterion. 

\begin{lem} [{\cite[Proposition~5.7]{L}}] \label{lem:QBG_criterion}
Let $w \in W$. 
\begin{enu}
\item Let $1 \le k < l \le n$. Then, $w \xrightarrow{(k, l)} ws_{(k, l)}$ is an edge in $\QBG(W)$ if and only if there does not exist $k < j < l$ such that $w(k) \prec w(j) \prec w(l)$. 
\item Let $1 \le k < l \le n$. Then, $w \xrightarrow{(k, \overline{l})} ws_{(k, \overline{l})}$ is an edge in $\QBG(W)$ if and only if the following hold: 
\begin{itemize}
\item $w(k) < w(\overline{l})$; 
\item $\sgn(w(k)) = \sgn(w(l))$; and 
\item there does not exist $k < j < \overline{l}$ such that $w(k) < w(j) < w(\overline{l})$. 
\end{itemize}
\item Let $1 \le k \le n$. Then, $w \xrightarrow{(k, \overline{k})} ws_{(k, \overline{k})}$ is an edge in $\QBG(W)$ if and only if there does not exist $k < j < \overline{k}$ such that $w(k) \prec w(j) \prec w(\overline{k})$. 
\end{enu}
\end{lem}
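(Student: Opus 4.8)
The plan is to reduce each statement to an explicit length computation in the hyperoctahedral group and then to match the resulting numerical conditions with the stated combinatorial criteria. The starting point is the reformulation of the edge condition, valid in any finite Weyl group: an arrow $w \xrightarrow{\alpha} ws_{\alpha}$ with $\alpha \in \Delta^{+}$ belongs to $\QBG(W)$ if and only if either $\ell(ws_{\alpha}) = \ell(w) + 1$ (the Bruhat case (B)) or $\ell(ws_{\alpha}) = \ell(w) - 2\pair{\rho}{\alpha^{\vee}} + 1 = \ell(w) - \ell(s_{\alpha})$ (the quantum case (Q)). In other words, an edge is present exactly when the length change $\ell(ws_{\alpha}) - \ell(w)$ is \emph{extremal}: equal to $+1$, or equal to its most negative possible value $-\ell(s_{\alpha})$. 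First I would record the reflection lengths $\ell(s_{\alpha}) = 2\pair{\rho}{\alpha^{\vee}} - 1$ in each case, using $\rho = (n, n-1, \ldots, 1)$ in the $\ve$-coordinates: $\ell(s_{(k,l)}) = 2(l-k)-1$, $\ell(s_{(k,\overline{l})}) = 2(2n-k-l+2)-1$, and $\ell(s_{(k,\overline{k})}) = 2(n-k+1)-1$.

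Next I would compute $\ell(ws_{\alpha}) - \ell(w)$ combinatorially. Using $\ell(w) = \#\{\beta \in \Delta^{+} : w\beta \in -\Delta^{+}\}$ together with $s_{\alpha}\alpha = -\alpha$, the contribution of the co-inversions of $s_{\alpha}$ cancels, and one is left with $\ell(ws_{\alpha}) - \ell(w) = \sgn(w\alpha) + 2(P - Q)$, where $P$ and $Q$ count, among the pairs $\{\beta, -s_{\alpha}\beta\}$ of inversions of $s_{\alpha}$ distinct from $\alpha$, those on which $w$ is positive on both members (resp.\ negative on both). Translating into the signed-permutation model, these pairs are indexed by the intermediate positions $j$ lying strictly between the two entries interchanged by $s_{\alpha}$, and whether a pair contributes to $P$ or $Q$ is governed by whether the value $w(j)$ falls inside or outside the range cut out by the two interchanged values $w(k)$ and $w(l)$ (resp.\ $w(k)$ and $w(\overline{l})$, resp.\ $w(k)$ and $w(\overline{k})$). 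In this way each length difference becomes a count of intermediate indices $j$ with a prescribed position of $w(j)$ relative to the interchanged values.

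The decisive step is then to see that the two extremal conditions collapse into a single cyclic-betweenness criterion. When $\sgn(w\alpha) = +1$ we are in case (B), where $Q = 0$ automatically and the requirement is $P = 0$, i.e.\ \emph{no} intermediate $j$ has $w(j)$ strictly between the two interchanged values in the ambient total order; when $\sgn(w\alpha) = -1$ we are in case (Q), where $P = 0$ automatically and the requirement is that $Q$ be maximal, i.e.\ \emph{every} intermediate $j$ avoids the complementary range. Reading both requirements in the cyclically shifted order $\prec_{w(k)}$ of Section~\ref{sec:cancellation-free_statement} merges them into the uniform ``no $j$ with $w(k) \prec w(j) \prec w(l)$'' statements in (1) and (3). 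I would treat the three root types separately, since the pairing $\beta \mapsto -s_{\alpha}\beta$ and the set of intermediate positions differ: for $(k,l)$ only the unbarred positions $k < j < l$ intervene; for $(k,\overline{l})$ both unbarred and barred positions appear, which is where the value condition $w(k) < w(\overline{l})$ and the extra sign condition $\sgn(w(k)) = \sgn(w(l))$ emerge; and for the long root $(k,\overline{k}) = 2\ve_{k}$ one must handle the self-paired structure of the inversion set of $s_{(k,\overline{k})}$ carefully.

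The main obstacle I anticipate is the sign bookkeeping in case (2). There $s_{(k,\overline{l})}$ interchanges two pairs of entries ($k \leftrightarrow \overline{l}$ and $l \leftrightarrow \overline{k}$), so the length difference receives contributions from both the inversion count and the negative-sum part of the type-$C$ length; isolating precisely $\sgn(w(k)) = \sgn(w(l))$ as the boundary between an edge and a non-edge, and confirming that the quantum (maximal-drop) case is genuinely \emph{equivalent} to, rather than merely implied by, the stated interval condition, is the step most likely to require a delicate case split. An alternative route worth keeping in reserve is to transport the type-$A$ criterion through the embedding of signed permutations into $S_{2n}$, with $s_{(k,l)} \mapsto (k\,l)(\overline{k}\,\overline{l})$, $s_{(k,\overline{l})} \mapsto (k\,\overline{l})(l\,\overline{k})$, and $s_{(k,\overline{k})} \mapsto (k\,\overline{k})$; but since reflection lengths are not preserved under this embedding, the direct length computation above is likely the cleaner path.
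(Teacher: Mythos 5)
First, a point of reference: the paper does not prove this lemma at all --- it is quoted verbatim from Lenart \cite[Proposition~5.7]{L} --- so your attempt is being measured against the cited literature rather than against an argument appearing in the paper.

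Your strategy is reasonable for parts (1) and (3), but it contains a false step that invalidates part (2), precisely the case you flag as delicate. The reformulation you build everything on --- that the quantum condition $\ell(ws_{\alpha}) = \ell(w) - 2\pair{\rho}{\alpha^{\vee}} + 1$ coincides with the ``maximal drop'' condition $\ell(ws_{\alpha}) = \ell(w) - \ell(s_{\alpha})$ --- presupposes the identity $\ell(s_{\alpha}) = 2\pair{\rho}{\alpha^{\vee}} - 1$, i.e.\ that $\alpha$ is a \emph{quantum root}. In type $C_{n}$ this holds for $(k,l) = \ve_{k}-\ve_{l}$ and $(k,\overline{k}) = 2\ve_{k}$, but fails for $(k,\overline{l}) = \ve_{k}+\ve_{l}$ with $k<l$: a direct inversion count gives $\ell(s_{(k,\overline{l})}) = 4n-2k-2l+1$, whereas $2\pair{\rho}{(\ve_{k}+\ve_{l})^{\vee}} - 1 = 4n-2k-2l+3$ (so your stated reflection length $2(2n-k-l+2)-1$ is off by $2$). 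Consequently, for these roots the length drop demanded by condition (Q) exceeds $\ell(s_{\alpha})$ and is unattainable: there are \emph{no} quantum edges labelled $(k,\overline{l})$ with $k<l$, and part (2) is purely a Bruhat-edge criterion --- which is why its conditions are phrased in the total order $<$ rather than the cyclic orders $\prec$ used in (1) and (3). Your ``extremal length change'' test would instead admit spurious quantum edges. Concretely, take $n=2$, $w = s_{2}s_{1}s_{2} = s_{\ve_{1}+\ve_{2}}$, $\alpha = (1,\overline{2})$: then $\ell(ws_{\alpha}) = 0 = \ell(w) - \ell(s_{\alpha})$, so your criterion declares a quantum edge, but condition (Q) would require $\ell(ws_{\alpha}) = -2$, and the lemma's requirement $w(1) < w(\overline{2})$ fails (here $w(1) = \overline{2}$ and $w(\overline{2}) = 1$, with $1 < \overline{2}$ in the total order), so $w \xrightarrow{\alpha} ws_{\alpha}$ is not an edge of $\QBG(W)$. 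The repair is to replace your second extremal case by the correct dichotomy: quantum edges labelled $\alpha$ can exist only when $\alpha$ is a quantum root, and for $(k,\overline{l})$ with $k<l$ the entire statement (2) must be derived as a Bruhat-only criterion. With that correction, parts (1) and (3) can proceed along the lines you sketch, though the merging of the Bruhat and quantum cases into a single cyclic-betweenness condition still requires the case analysis you defer.
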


By using this criterion, we can show the following three lemmas. 

\begin{lem} \label{lem:QBG_exchange}
Let $w \in W$, and $1 \le k < l < m \le n$. Then, the following are equivalent: 
\begin{enu}
\item $w \xrightarrow{(k, m)} ws_{(k, m)}$ and $w \xrightarrow{(l, m)} ws_{(l, m)} \xrightarrow{(k, l)} ws_{(l, m)}s_{(k, l)}$; 
\item $w \xrightarrow{(k, m)} ws_{(k, m)}$ and $w \xrightarrow{(l, m)} ws_{(l, m)}$; 
\item $w \xrightarrow{(k, m)} ws_{(k, m)} \xrightarrow{(l, m)} ws_{(k, m)}s_{(l, m)}$. 
\end{enu}
\end{lem}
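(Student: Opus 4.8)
The plan is to reduce every arrow occurring in conditions (1)--(3) to the cyclic ``no intermediate value'' criterion of Lemma~\ref{lem:QBG_criterion}(1), and then to extract the three equivalences from the cyclic arrangement of the values $a := w(k)$, $b := w(l)$, $c := w(m)$ on the $2n$-element circle $[\overline{n}]$ ordered by $\prec$. Since $1 \le k < l < m \le n$, all of the transpositions $(k,m)$, $(l,m)$, $(k,l)$ are of the type covered by part (1) of the criterion, so part (1) is the only input I will need. First I would record, for each arrow, the exact condition produced by the criterion, taking care to substitute the one-line values \emph{after} the relevant transposition: writing $w' := ws_{(k,m)}$ one has $w'(k) = c$, $w'(l) = b$, $w'(m) = a$ and $w'(j) = w(j)$ otherwise, and writing $w'' := ws_{(l,m)}$ one has $w''(k) = a$, $w''(l) = c$, $w''(m) = b$ and $w''(j) = w(j)$ otherwise. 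In this way the arrow $w \xrightarrow{(k,m)} ws_{(k,m)}$ becomes the statement $X$ that no $j \in (k,m)$ satisfies $a \prec w(j) \prec c$; the arrow $w \xrightarrow{(l,m)} ws_{(l,m)}$ becomes $Y$ (no $j \in (l,m)$ with $b \prec w(j) \prec c$); the arrow $ws_{(l,m)} \xrightarrow{(k,l)} \cdots$ becomes $Z_{1}$ (no $j \in (k,l)$ with $a \prec w(j) \prec c$); and the arrow $ws_{(k,m)} \xrightarrow{(l,m)} \cdots$ becomes $Z_{3}$ (no $j \in (l,m)$ with $b \prec w(j) \prec a$). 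Thus (1) reads $X \wedge Y \wedge Z_{1}$, condition (2) reads $X \wedge Y$, and (3) reads $X \wedge Z_{3}$.

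The key structural observation I would exploit next is that the index set $(k,m)$ decomposes as $(k,l) \sqcup \{l\} \sqcup (l,m)$, so the single condition $X$ splits into three parts: the part over $(k,l)$ is exactly $Z_{1}$; the contribution of $j = l$ is $\neg(a \prec b \prec c)$; and the part over $(l,m)$ is an auxiliary condition $W$, namely that no $j \in (l,m)$ satisfies $a \prec w(j) \prec c$. In particular $X$ implies $Z_{1}$, and this already yields the equivalence of (1) and (2): indeed (1) $\Rightarrow$ (2) simply drops $Z_{1}$, while (2) $\Rightarrow$ (1) recovers $Z_{1}$ from $X$. It therefore remains only to prove (2) $\Leftrightarrow$ (3); since both conditions contain $X$, this amounts to showing that $Y \Leftrightarrow Z_{3}$ under the assumption $X$.

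For this last step I would use the cyclic geometry of $a,b,c$. Because $a,b,c$ are distinct and $X$ forces $\neg(a \prec b \prec c)$, the three values must occur in the cyclic order $a \prec c \prec b$, which gives the disjoint decomposition of cyclic intervals $(b,c) = (b,a) \sqcup \{a\} \sqcup (a,c)$. Hence, for $j \in (l,m)$, the membership ``$w(j) \in (b,c)$'' defining $Y$ splits into ``$w(j) \in (b,a)$'' (which is precisely $Z_{3}$), ``$w(j) = a$'' (impossible, since $a = w(k)$ and $k \notin (l,m)$), and ``$w(j) \in (a,c)$'' (which is exactly the auxiliary condition $W$, already guaranteed by $X$). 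Consequently, under $X$, condition $Y$ holds if and only if $Z_{3}$ holds, establishing (2) $\Leftrightarrow$ (3). The only delicate points, and the ones I would verify most carefully, are the bookkeeping of which one-line entries are altered by each transposition and the cyclic-interval identity $(b,c) = (b,a) \sqcup \{a\} \sqcup (a,c)$ valid in the forced order $a \prec c \prec b$; everything else is a direct application of Lemma~\ref{lem:QBG_criterion}(1).
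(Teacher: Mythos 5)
Your proof is correct: the translation of each arrow into the ``no intermediate value in the cyclic order $\prec$'' condition via Lemma~\ref{lem:QBG_criterion}(1) is carried out accurately (including the one-line bookkeeping after each transposition), and the splitting of the index interval $(k,m)$ together with the cyclic-interval identity $(b,c)=(b,a)\sqcup\{a\}\sqcup(a,c)$ under the forced arrangement $a\prec c\prec b$ gives exactly the equivalences (1)$\Leftrightarrow$(2)$\Leftrightarrow$(3). This is precisely the route the paper intends --- it states only that the lemma follows ``by using this criterion'' and omits the details, so your argument supplies the missing verification along the same lines.
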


\begin{lem} \label{lem:QBG_exchange2}
Let $w \in W$. Take $1 \le k_{1} < l_{1} \le n$ and $1 \le k_{2} < l_{2} \le n$ such that $\{ k_{1}, l_{1} \} \cap \{ k_{2}, l_{2} \} = \emptyset$. 
Then, the following are equivalent: 
\begin{enu}
\item we have the directed path $w \xrightarrow{(k_{1}, l_{1})} ws_{(k_{1}, l_{1})} \xrightarrow{(k_{2}, l_{2})} ws_{(k_{1}, l_{1})}s_{(k_{2}, l_{2})}$; 
\item we have the directed path $w \xrightarrow{(k_{2}, l_{2})} ws_{(k_{2}, l_{2})} \xrightarrow{(k_{1}, l_{1})} ws_{(k_{2}, l_{2})}s_{(k_{1}, l_{1})}$. 
\end{enu}
\end{lem}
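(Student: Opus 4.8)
The plan is to read off the four relevant edges directly from the criterion of Lemma~\ref{lem:QBG_criterion}(1), which applies because every label occurring is a root of the form $(k,l)=\ve_{k}-\ve_{l}$ with $1\le k<l\le n$. First I would record two structural facts. Since $\{k_{1},l_{1}\}\cap\{k_{2},l_{2}\}=\emptyset$, the reflections $s_{(k_{1},l_{1})}$ and $s_{(k_{2},l_{2})}$ have disjoint supports, hence commute, so the two paths in (1) and (2) share the endpoint $ws_{(k_{1},l_{1})}s_{(k_{2},l_{2})}=ws_{(k_{2},l_{2})}s_{(k_{1},l_{1})}$. As signed permutations, $ws_{(k_{2},l_{2})}$ agrees with $w$ at every position outside $\{k_{2},l_{2}\}$, and $ws_{(k_{1},l_{1})}$ agrees with $w$ outside $\{k_{1},l_{1}\}$. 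Writing (a), (b), (c), (d) for the four one-step edge conditions $w\xrightarrow{(k_{1},l_{1})}ws_{(k_{1},l_{1})}$, $ws_{(k_{1},l_{1})}\xrightarrow{(k_{2},l_{2})}ws_{(k_{1},l_{1})}s_{(k_{2},l_{2})}$, $w\xrightarrow{(k_{2},l_{2})}ws_{(k_{2},l_{2})}$, and $ws_{(k_{2},l_{2})}\xrightarrow{(k_{1},l_{1})}ws_{(k_{2},l_{2})}s_{(k_{1},l_{1})}$, the claim becomes $\text{(a)}\wedge\text{(b)}\iff\text{(c)}\wedge\text{(d)}$. Since interchanging the two pairs swaps (1) with (2), it suffices to treat one representative of each interleaving type of the intervals $[k_{1},l_{1}]$ and $[k_{2},l_{2}]$.

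Next I would dispose of the easy interleavings. In the disjoint case ($l_{1}<k_{2}$ or $l_{2}<k_{1}$) and in the nested case ($k_{1}<k_{2}<l_{2}<l_{1}$, together with its mirror image), the swap $s_{(k_{2},l_{2})}$ either misses the open interval $(k_{1},l_{1})$ entirely or permutes two of its interior values, and symmetrically for $s_{(k_{1},l_{1})}$ on $(k_{2},l_{2})$. Because the criterion of Lemma~\ref{lem:QBG_criterion}(1) depends only on the two endpoint values together with the \emph{set} of interior values, I obtain $\text{(a)}\iff\text{(d)}$ and $\text{(b)}\iff\text{(c)}$ separately, and the conjunctions match.

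The essential case is the crossing one, say $k_{1}<k_{2}<l_{1}<l_{2}$ (the other crossing order being symmetric). Here I would split the positions into $P_{1}=(k_{1},k_{2})$, $P_{2}=(k_{2},l_{1})$, $P_{3}=(l_{1},l_{2})$, so that $(k_{1},l_{1})=P_{1}\sqcup\{k_{2}\}\sqcup P_{2}$ and $(k_{2},l_{2})=P_{2}\sqcup\{l_{1}\}\sqcup P_{3}$. Putting $a=w(k_{1})$, $b=w(k_{2})$, $c=w(l_{1})$, $d=w(l_{2})$ and applying the criterion, each of (a), (b), (c), (d) factors as an interior condition over $P_{1}\cup P_{2}$ or $P_{2}\cup P_{3}$ together with a single crossing condition at the intruding index. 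Using that $s_{(k_{1},l_{1})}$ fixes $P_{2}\cup P_{3}$ pointwise and $s_{(k_{2},l_{2})}$ fixes $P_{1}\cup P_{2}$ pointwise, the interior factor of (b) equals that of (c) and the interior factor of (d) equals that of (a). Thus $\text{(a)}\wedge\text{(b)}$ and $\text{(c)}\wedge\text{(d)}$ carry the same common interior factor, and the equivalence reduces to the crossing conditions, namely to the identity, for four distinct points on the cyclic order $1\prec 2\prec\cdots\prec n\prec\overline{n}\prec\cdots\prec\overline{1}\prec 1$,
\[
\bigl(\text{not } a\prec b\prec c\bigr)\wedge\bigl(\text{not } b\prec a\prec d\bigr)\iff\bigl(\text{not } b\prec c\prec d\bigr)\wedge\bigl(\text{not } a\prec d\prec c\bigr).
\]
This identity depends only on the cyclic arrangement of $a,b,c,d$, so it is verified by inspecting the finitely many (six) cyclic orders of four labeled points; this is the only computational step.

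The main obstacle is exactly this crossing case: unlike the disjoint and nested cases, the pairwise equivalences $\text{(a)}\iff\text{(d)}$ and $\text{(b)}\iff\text{(c)}$ genuinely fail, since the transposed value $w(l_{2})$ (resp. $w(k_{1})$) really enters the interval $(k_{1},l_{1})$ (resp. $(k_{2},l_{2})$) and can change a single edge condition. What saves the statement is that the two crossing conditions are coupled so that only their conjunctions agree; isolating this coupling via the decomposition into $P_{1},P_{2},P_{3}$ and reducing it to the clean cyclic-order identity above is the heart of the argument, while tracking which positions each reflection moves is routine bookkeeping.
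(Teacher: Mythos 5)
Your argument is correct, and it follows exactly the route the paper indicates: the paper gives no written proof of Lemma~\ref{lem:QBG_exchange2} beyond the remark that it (and its neighbours) follows from the criterion of Lemma~\ref{lem:QBG_criterion}, and your case analysis by interleaving type, with the crossing case reduced to the cyclic-order identity for four distinct points, is a complete and accurate instantiation of that approach. I checked the six cyclic arrangements in your final identity and the two conjunctions agree in every case, so the only step you leave to inspection does go through.
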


\begin{lem} \label{lem:QBG_existence}
Let $w \in W$, $m = 1, \ldots, n$, and take $a_{1}, \ldots, a_{s} \in \{ k \in [1, m-1] \mid w \xrightarrow{(k, m)} ws_{(k, m)} \}$ such that $a_{1} < \cdots < a_{s}$; 
by Lemma~\ref{lem:QBG_exchange}, we have the directed path 
\begin{equation*}
w = y_{0} \xrightarrow{(a_{1}, m)} y_{1} \xrightarrow{(a_{2}, m)} \cdots \xrightarrow{(a_{s}, m)} y_{s}
\end{equation*}
in $\QBG(W)$. Let us take $c < a_{1}$ such that 
\begin{itemize}
\item $y_{s} \xrightarrow{(c, a_{1})} y_{s}s_{(c, a_{1})} =: z$ is an edge in $\QBG(W)$, and 
\item $w \xrightarrow{(c, m)} ws_{(c, m)}$ is an edge in $\QBG(W)$. 
\end{itemize}
For $p < a_{1}$, if $w \xrightarrow{(p, m)} ws_{(p, m)}$ is an edge in $\QBG(W)$, then we have $p < c$. 
\end{lem}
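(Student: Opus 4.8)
The plan is to translate every edge assertion into the combinatorial criterion of Lemma~\ref{lem:QBG_criterion}(1) and then argue by contradiction. Since all of $a_1, \ldots, a_s, c, p, m$ lie in $\{1, \ldots, n\}$, only part (1) of that criterion is needed: $w \xrightarrow{(k,l)} ws_{(k,l)}$ is an edge precisely when no index $j$ with $k < j < l$ satisfies $w(k) \prec w(j) \prec w(l)$. Regarding $[\overline{n}]$ as points arranged on an oriented circle in the order $1, 2, \ldots, n, \overline{n}, \ldots, \overline{1}$, this says that no such $w(j)$ lies on the open arc running from $w(k)$ to $w(l)$ in the positive direction. I would assume, toward a contradiction, that there is an index $p$ with $c < p < a_1$ for which $w \xrightarrow{(p,m)} ws_{(p,m)}$ is an edge, and aim to contradict the first bullet defining $c$.

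The first step is to make the permutation $y_s$ explicit. Composing the transpositions in $y_s = w\, s_{(a_1,m)} s_{(a_2,m)} \cdots s_{(a_s,m)}$ shows that $s_{(a_1,m)} \cdots s_{(a_s,m)}$ is the single cycle $(a_1, m, a_s, a_{s-1}, \ldots, a_2)$, whence $y_s(a_1) = w(m)$, $y_s(m) = w(a_s)$, $y_s(a_i) = w(a_{i-1})$ for $2 \le i \le s$, and $y_s(x) = w(x)$ for all $x \notin \{a_1, \ldots, a_s, m\}$. Because every $j$ with $c < j < a_1$ satisfies $j < a_1 = \min\{a_1, \ldots, a_s\}$, one has $y_s(c) = w(c)$, $y_s(j) = w(j)$, and $y_s(a_1) = w(m)$; thus the first bullet, the edge $y_s \xrightarrow{(c,a_1)} z$, is equivalent to the purely $w$-theoretic statement that no $j$ with $c < j < a_1$ has $w(c) \prec w(j) \prec w(m)$.

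The heart of the argument is then to show that the hypothetical index $p$ forces such a forbidden $j$ to exist. Here I would feed the triples $c < p < m$ and $c < p < a_1$ into the exchange Lemma~\ref{lem:QBG_exchange}, together with the edges $w \xrightarrow{(c,m)}$, $w \xrightarrow{(p,m)}$ and the path edges $w \xrightarrow{(a_i,m)}$, in order to pin down the cyclic order of the four values $w(c), w(p), w(a_1), w(m)$ on the circle; Lemma~\ref{lem:QBG_exchange2} would be used to commute the disjoint transpositions occurring along $y_s$, so that the relevant edges may be tested one at a time. The goal is to exhibit an index strictly between $c$ and $a_1$ whose $w$-value lies on the arc from $w(c)$ to $w(m)$, contradicting the reformulated first bullet.

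I expect the main obstacle to be the cyclic ``betweenness'' bookkeeping. Because the criterion is stated relative to the moving base point $w(k)$, the three edge conditions entering the argument ($w \xrightarrow{(c,m)}$, $w \xrightarrow{(p,m)}$, and the $y_s$-edge) refer to arcs based at different points, and one must track precisely how the arc from $w(c)$ to $w(m)$, the arc from $w(p)$ to $w(m)$, and the arcs from $w(a_i)$ to $w(m)$ overlap and nest. The delicate point is that the two bullets defining $c$ are nearly redundant when considered in isolation (the $(c,m)$-edge condition already constrains all $j$ with $c<j<m$, hence all $j$ with $c<j<a_1$), so the decisive use of the hypotheses must combine the first bullet with the maximality of $c$ among edge-indices below $a_1$ and with the path edges $(a_i,m)$; locating the precise index $j$ that yields the contradiction, and verifying that the disjointness and nesting hypotheses of Lemmas~\ref{lem:QBG_exchange} and \ref{lem:QBG_exchange2} are genuinely met at each application, is where the real care is required.
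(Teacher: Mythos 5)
A preliminary remark: the paper never proves this lemma --- it is one of ``the following three lemmas'' asserted to follow from Lemma~\ref{lem:QBG_criterion} --- so there is no authorial argument to measure yours against; what follows is a direct assessment. Your opening is correct and is surely the intended reduction: only part (1) of the criterion is needed, your formula for $y_{s}$ is right, and in particular $y_{s}(c)=w(c)$, $y_{s}(j)=w(j)$ for $j<a_{1}$, and $y_{s}(a_{1})=w(m)$, so the first bullet becomes ``no $j$ with $c<j<a_{1}$ has $w(c)\prec w(j)\prec w(m)$''. But the ``delicate point'' you flag at the end --- that the second bullet already implies the first --- is not a bookkeeping issue to be finessed; it is the symptom that the statement as printed is false, and your plan cannot succeed. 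Read literally, $p=c$ already violates the conclusion. Your tacit restriction to $c<p<a_{1}$ does not help either: in type $C_{4}$ take $w(i)=5-i$ for $i\le 4$, $m=4$, $s=1$, $a_{1}=3$, $c=1$, $p=2$; Lemma~\ref{lem:QBG_criterion}\,(1) shows that $w\xrightarrow{(k,4)}ws_{(k,4)}$ is an edge for $k=1,2,3$ and that $y_{1}\xrightarrow{(1,3)}y_{1}s_{(1,3)}$ is an edge, so both printed bullets hold with $c=1$, yet $p=2\not<1$. The second bullet must read ``$w\xrightarrow{(c,m)}ws_{(c,m)}$ is \emph{not} an edge''; this is exactly the hypothesis the proof of Corollary~\ref{cor:IC_cancellation_minimum} feeds into the lemma (there the lemma is invoked for a $c$ with $w\xrightarrow{(c,m)}ws_{(c,m)}$ not an edge), and it also disposes of the case $p=c$.

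With the corrected hypothesis the missing idea --- absent from your outline --- is the witness produced by the \emph{failure} of the edge at $(c,m)$: some $j_{0}$ with $c<j_{0}<m$ and $w(c)\prec w(j_{0})\prec w(m)$. The reformulated first bullet forces $j_{0}\ge a_{1}$. Now if $c<p<a_{1}$ and $w\xrightarrow{(p,m)}ws_{(p,m)}$ is an edge, then $p<j_{0}<m$, and three instances of the criterion are jointly contradictory on the circle $[\overline{n}]$ based at $w(m)$: the first bullet at $j=p$ places $w(p)$ on the open arc from $w(m)$ to $w(c)$; the choice of $j_{0}$ places $w(j_{0})$ on the open arc from $w(c)$ to $w(m)$; and the edge $w\xrightarrow{(p,m)}ws_{(p,m)}$ tested at $j=j_{0}$ places $w(j_{0})$ on the open arc from $w(m)$ to $w(p)$. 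Measuring positions from $w(m)$ gives $\mathrm{pos}(w(j_{0}))<\mathrm{pos}(w(p))<\mathrm{pos}(w(c))<\mathrm{pos}(w(j_{0}))$, a contradiction. (Equivalently one can rearrange this to land on your intended target, $w(c)\prec w(p)\prec w(m)$, contradicting the first bullet.) Note that Lemmas~\ref{lem:QBG_exchange} and \ref{lem:QBG_exchange2} are not needed for this step, and the path edges $w\xrightarrow{(a_{i},m)}ws_{(a_{i},m)}$ enter only through the definition of $y_{s}$.
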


\begin{cor} \label{cor:IC_cancellation_minimum}
Let $w \in W$, $m = 1, \ldots, n$, and let $\{ a_{1} < \cdots < a_{s} \} = \{ k \in [1, m-1] \mid w \xrightarrow{(k, m)} ws_{(k, m)} \}$ with $s \ge 2$; 
by Lemma~\ref{lem:QBG_existence}, for $2 = b_{1} < \cdots < b_{u} \le s$, we have the directed path 
\begin{equation*}
w = z_{0} \xrightarrow{(a_{b_{1}}, m)} z_{1} \xrightarrow{(a_{b_{2}}, m)} \cdots \xrightarrow{(a_{b_{u}}, m)} z_{u}
\end{equation*}
in $\QBG(W)$. Then, $a_{1}$ is equal to the minimal $c$ such that $1 \le c < a_{b_{1}}$ for which $z_{u} \xrightarrow{(c, a_{b_{1}})} z_{u}s_{(c, a_{b_{1}})}$ is an edge in $\QBG(W)$. 
\end{cor}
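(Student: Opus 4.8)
The plan is to reduce the statement to the edge criterion of Lemma~\ref{lem:QBG_criterion}(1), after recording the relevant values of the signed permutation $z_u$. Since each $a_{b_i} < m$, the reflection $s_{(a_{b_i}, m)}$ interchanges the values in positions $a_{b_i}$ and $m$ (and in their barred counterparts) and fixes all others; hence $z_u = w\,s_{(a_{b_1}, m)} \cdots s_{(a_{b_u}, m)}$ moves only the positions $a_{b_1}, \ldots, a_{b_u}, m$. A direct computation then gives the two facts I need: $z_u(j) = w(j)$ for every $j < a_{b_1} = a_2$, and $z_u(a_2) = w(m)$. Because all roots $(c, a_2)$ in question satisfy $1 \le c < a_2 \le n$, only part (1) of Lemma~\ref{lem:QBG_criterion} is relevant, and for such $c$ it says that $z_u \xrightarrow{(c, a_2)} z_u s_{(c, a_2)}$ is an edge if and only if there is no $c < j < a_2$ with $w(c) \prec w(j) \prec w(m)$. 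This is exactly the obstruction condition in the criterion for the edge $w \xrightarrow{(c, m)} ws_{(c, m)}$, but restricted to the smaller index range $(c, a_2)$ rather than $(c, m)$.

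For the existence half, that $c = a_1$ gives an edge, I would invoke the criterion for $w \xrightarrow{(a_1, m)} ws_{(a_1, m)}$, which holds by hypothesis since $a_1$ lies in $\{k \in [1, m-1] \mid w \xrightarrow{(k, m)} ws_{(k, m)}\}$. It rules out any obstruction $j$ in the whole range $(a_1, m)$, hence in particular in $(a_1, a_2)$, so by the reformulation above $z_u \xrightarrow{(a_1, a_2)} z_u s_{(a_1, a_2)}$ is an edge.

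For the minimality half I would fix $c < a_1$ and show that $z_u \xrightarrow{(c, a_2)}$ is \emph{not} an edge. Since $a_1$ is the smallest index with $w \xrightarrow{(\cdot, m)}$ an edge, the edge $w \xrightarrow{(c, m)}$ fails, so the criterion yields an obstruction; let $j^{*}$ be the \emph{smallest} $j \in (c, m)$ with $w(c) \prec w(j) \prec w(m)$. The key point is to show $j^{*} \le a_1$: once this is known, $j^{*} \in (c, a_2)$ is an obstruction in the smaller range, so $z_u \xrightarrow{(c, a_2)}$ is not an edge. Suppose instead $j^{*} > a_1$. By minimality of $j^{*}$, the index $a_1 \in (c, j^{*})$ is not an obstruction, so $w(a_1)$ lies in the complementary arc, i.e.\ $w(m) \prec w(a_1) \prec w(c)$. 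Tracing the cyclic order $\prec$ then shows that the open arc from $w(c)$ to $w(m)$ is contained in the one from $w(a_1)$ to $w(m)$; applying this to $w(j^{*})$ gives $w(a_1) \prec w(j^{*}) \prec w(m)$ with $a_1 < j^{*} < m$, which contradicts the criterion for the edge $w \xrightarrow{(a_1, m)}$. Hence $j^{*} \le a_1$, as needed.

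Combining the two halves yields $a_1 = \min\{\, c : 1 \le c < a_2,\ z_u \xrightarrow{(c, a_2)} z_u s_{(c, a_2)} \text{ is an edge}\,\}$, which is the assertion. I expect the minimality half to be the main obstacle, and within it the step excluding $j^{*} > a_1$: this is the only place where the cyclic nature of $\prec$ is genuinely used, in tandem with the edge $w \xrightarrow{(a_1, m)}$, and the arc containment must be checked carefully. By contrast, the explicit description of $z_u$, the reduction to a common obstruction condition, and the existence half are all routine consequences of Lemma~\ref{lem:QBG_criterion}(1).
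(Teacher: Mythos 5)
Your proposal is correct, but it takes a genuinely different route from the paper's. The paper deduces the corollary from the three auxiliary lemmas: the existence of the edge $z_{u} \xrightarrow{(a_{1}, a_{b_{1}})} z_{u}s_{(a_{1}, a_{b_{1}})}$ is obtained by combining Lemma~\ref{lem:QBG_exchange} with Lemma~\ref{lem:QBG_exchange2} (transporting the edge labeled $(a_{1}, a_{b_{1}})$ past the commuting edges labeled $(a_{b_{i}}, m)$), and the minimality of $c = a_{1}$ is obtained by contradiction from Lemma~\ref{lem:QBG_existence}. You bypass Lemmas~\ref{lem:QBG_exchange}--\ref{lem:QBG_existence} entirely and argue directly from the criterion of Lemma~\ref{lem:QBG_criterion}\,(1): after computing that $z_{u}$ agrees with $w$ on all positions $j < a_{b_{1}}$ and that $z_{u}(a_{b_{1}}) = w(m)$, both the edge $z_{u} \xrightarrow{(c, a_{b_{1}})} z_{u}s_{(c, a_{b_{1}})}$ and the edge $w \xrightarrow{(c, m)} ws_{(c, m)}$ reduce to the absence of an index $j$ with $w(c) \prec w(j) \prec w(m)$, over the nested ranges $(c, a_{b_{1}}) \subset (c, m)$; existence for $c = a_{1}$ is then immediate, and for $c < a_{1}$ you relocate the minimal obstruction $j^{\ast}$ into the smaller range by a cyclic-order (arc-containment) argument played off against the edge $w \xrightarrow{(a_{1}, m)} ws_{(a_{1}, m)}$. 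I checked the three delicate points --- that $z_{u}(j) = w(j)$ for $j < a_{b_{1}}$ and $z_{u}(a_{b_{1}}) = w(m)$, that only part (1) of Lemma~\ref{lem:QBG_criterion} is needed, and the step excluding $j^{\ast} > a_{1}$ --- and they are all sound. What the paper's route buys is modularity: Lemmas~\ref{lem:QBG_exchange}--\ref{lem:QBG_existence} are reusable statements about $\QBG(W)$ in type $C$ that also serve elsewhere in Section~\ref{sec:cancellation-free_proof}. Your argument is self-contained and in effect re-proves the content of Lemma~\ref{lem:QBG_existence} in exactly the special case needed, with the added virtue that it does not depend on the precise formulation of that lemma's hypotheses.
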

\begin{proof}
Let us take the minimal $c$ for which $z_{u} \xrightarrow{(c, a_{b_{1}})} z_{u}s_{(c, a_{b_{1}})}$ is an edge in $\QBG(W)$. 
Note that such a $c$ exists since $z_{u} \xrightarrow{(a_{1}, a_{b_{1}})} z_{u} s_{(a_{1}, a_{b_{1}})}$ is an edge in $\QBG(W)$ by Lemmas~\ref{lem:QBG_exchange} and \ref{lem:QBG_exchange2}. This also implies that $c \le a_{1}$. 
Assume, for a contradiction, that $c < a_{1}$. Then, since $a_{1}$ is the minimum of the set $\{k \in [1, m-1] \mid w \xrightarrow{(k, m)} ws_{(k, m)} \}$, $w \xrightarrow{(c, m)} ws_{(c, m)}$ is not an edge in $\QBG(W)$. Also, we see that $a_{1} < a_{2} \le a_{b_{1}}$ and that $w \xrightarrow{(a_{1}, m)} ws_{(a_{1}, m)}$ is an edge in $\QBG(W)$. 
Therefore, by Lemma~\ref{lem:QBG_existence}, we obtain $a_{1} < c$, which is a contradiction. 
Hence we conclude that $c = a_{1}$, as desired. This proves the corollary.
\end{proof}

Theorem~\ref{thm:IC_cancellation-free_first-half} follows immediately from the following key proposition and Theorem~\ref{thm:IC_first-half}. 
Let $\BZ[q^{-1}][W]$ denote the group algebra of $W$ with coefficients in $\BZ[q^{-1}]$; 
the elements of $\BZ[q^{-1}][W]$ are of the form $\sum_{v \in W} c_{v}(q^{-1}) v$, with $c_{v}(q^{-1}) \in \BZ[q^{-1}]$. 
\begin{prop}
Let $w \in W$, $m = 1, \ldots, n$, and $j = 1, \ldots, m-1$. Then, there holds the following equality in $\BZ[q^{-1}][W]$: 
\begin{equation*}
\begin{split}
& \sum_{(j_{1}, \ldots, j_{r}) \in \CS_{m, j}} \sum_{A_{1} \in \CA_{w}^{m, j_{1}}} \cdots \sum_{A_{r} \in \CA_{\ed(A_{r-1})}^{j_{r-1}, j_{r}}} (-1)^{|A_{1}|+\cdots+|A_{r}|-r} q^{-\pair{\lambda}{\down(A_{1}, \ldots, A_{r})}} \ed(A_{r}) \\ 
&= q^{-\pair{\lambda}{\wt(\bp_{m, j}(w))}} \ed(\bp_{m, j}(w)). 
\end{split}
\end{equation*}
\end{prop}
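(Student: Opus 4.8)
The plan is to prove the identity by induction on the difference $m - j$, after first recording two structural simplifications of the index sets appearing on the left-hand side. For $A = \{i_{1} < \cdots < i_{s}\} \in \CA(w, \Theta_{k}) \setminus \{\emptyset\}$, each reflection $s_{(i_{t}, k)}$ with $t \ge 2$ fixes $\ve_{i_{1}}$, so that $\ed(A)^{-1} w \ve_{k} = \ve_{i_{1}} = \ve_{\min A}$; hence $\CA_{w}^{k, l} = \{A \in \CA(w, \Theta_{k}) \setminus \{\emptyset\} \mid \min A = l\}$. Moreover, by iterating the exchange lemmas (Lemmas~\ref{lem:QBG_exchange} and \ref{lem:QBG_exchange2}) one checks that a subset $A \subseteq \{1, \ldots, k-1\}$ is $w$-admissible for $\Theta_{k}$ precisely when $w \xrightarrow{(i, k)} ws_{(i, k)}$ is an edge of $\QBG(W)$ for every $i \in A$; write $E_{w}^{m}$ for this set of valid indices. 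For the base case $m - j = 1$, the set $\CS_{m, m-1}$ is the single sequence $(m-1)$ and $\CA_{w}^{m, m-1} = \{\{m-1\}\}$ (since $(m-1, m) = \alpha_{m-1}$ is simple, $w \to ws_{m-1}$ is always an edge), so the left-hand side reduces to $q^{-\pair{\lambda}{\down(\{m-1\})}} ws_{m-1}$, matching the right-hand side because $\bp_{m, m-1}(w)$ is the single edge $w \xrightarrow{(m-1, m)} ws_{m-1}$.

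For the inductive step I would split the left-hand sum according to $j_{1} = \min A_{1}$. The terms with $j_{1} = j$ (the case $r = 1$) give $\sum_{A_{1} \in \CA_{w}^{m, j}} (-1)^{|A_{1}|-1} q^{-\pair{\lambda}{\down(A_{1})}} \ed(A_{1})$, while for each $j < j_{1} \le m-1$ the inner double-sum over $A_{2}, \ldots, A_{r}$ is exactly the left-hand side of the proposition with $m$ and $w$ replaced by $j_{1}$ and $\ed(A_{1})$. By the induction hypothesis (valid since $j_{1} - j < m - j$) this inner sum collapses to $q^{-\pair{\lambda}{\wt(\bp_{j_{1}, j}(\ed(A_{1})))}} \ed(\bp_{j_{1}, j}(\ed(A_{1})))$. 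This reduces the claim to an identity purely about the first-level admissible subsets $A_{1} \in \CA(w, \Theta_{m})$ paired with the distinguished continuation paths $\bp_{j_{1}, j}$.

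The heart of the argument is then a sign-reversing involution on these remaining configurations, built from the exchange lemmas. Given a first-level subset $A_{1}$ containing its minimum $j_{1} = \min A_{1}$, Lemma~\ref{lem:QBG_exchange} (with Lemma~\ref{lem:QBG_exchange2}) would let one peel off the edge $(j_{1}, m)$ and reattach it as the initial step $(j_{1}, j_{1}')$ of the continuation, where $j_{1}' = \min(A_{1} \setminus \{j_{1}\})$; as in the worked example (where $A_{2} = \{1,2\}$ cancels against $(A_{3}, A_{4}) = (\{2\}, \{1\})$), this operation preserves both $\ed(A_{r})$ and the total $\down$-statistic, hence the $q$-power, while flipping the sign $(-1)^{|A_{1}|+\cdots+|A_{r}|-r}$. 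Corollary~\ref{cor:IC_cancellation_minimum}, which identifies the globally minimal admissible index as the first step recovered by the continuation path, would guarantee that this pairing is a well-defined involution whose unique fixed point is the configuration taking the minimal first step $(k_{0}, m)$ with $k_{0} = \min(E_{w}^{m} \cap \{j, \ldots, m-1\})$—that is, precisely the chain realizing $\bp_{m, j}(w)$.

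The main obstacle will be the construction and verification of this involution: one must show that the split/merge operation is sign-reversing and $\down$-preserving in every case (this requires the edge-type bookkeeping underlying the exchange lemmas, not merely the existence of the exchanged paths), that split and merge are mutually inverse, and—most delicately—that the single fixed point is exactly $\bp_{m, j}(w)$. This last point is where the minimality characterizations of Lemma~\ref{lem:QBG_existence} and Corollary~\ref{cor:IC_cancellation_minimum} are indispensable, since the recursive choice of minimal index defining $\bp_{m, j}(w)$ must be matched, step by step, against the place where no further peeling is possible. Careful treatment of the boundary cases (an $A_{1}$ that is a singleton equal to its minimum, or an empty continuation path) will determine whether a given configuration is fixed or paired, and will be needed to close the induction.
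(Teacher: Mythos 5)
Your plan coincides with the paper's proof in all of its structural components: the induction on $m-j$ with the same base case; the observation that $\ed(A)^{-1}w\ve_{m}=\ve_{\min A}$ and that, by Lemma~\ref{lem:QBG_exchange}, the admissible subsets for $\Theta_{m}$ are exactly the subsets of $\{a_{1}<\cdots<a_{s}\}:=\{k\in[1,m-1]\mid w\xrightarrow{(k,m)}ws_{(k,m)}\}$; the collapse of the inner sums by the induction hypothesis, which reduces everything to a signed sum over first-level subsets $A$ weighted by $q^{-\pair{\lambda}{\down(A)+\wt(\bp_{\min A,j}(\ed(A)))}}\,\ed(\bp_{\min A,j}(\ed(A)))$; and the sign-reversing involution. (Your peel/merge operation is literally the paper's involution $A\mapsto A\mathbin{\triangle}\{a_{1}\}$, because Corollary~\ref{cor:IC_cancellation_minimum} identifies the first edge of the distinguished continuation issuing from $\ed(A)$ as $(a_{1},\min A)$ whenever $a_{1}\notin A$; your description of the fixed point as the singleton on the minimal valid index in $\{j,\dots,m-1\}$ is the correct one.) The only substantive point you leave open --- that the involution preserves the exponent $\down(A)+\wt(\bp_{\min A,j}(\ed(A)))$ --- is settled in the paper not by ``edge-type bookkeeping'' but by a softer argument you should supply: the path attached to $A\cup\{a_{1}\}$, and the path attached to $A$ followed by the edge $(a_{1},\min A)$, are both \emph{shortest} directed paths in $\QBG(W)$ from $w$ to $\ed(A\cup\{a_{1}\})$ (shortestness holds because $(1,m),\dots,(m-1,m)$ is an initial segment of a reflection order on $\Delta^{+}$), and by \cite[Lemma~1\,(2)]{P} all shortest directed paths between two fixed vertices of $\QBG(W)$ have the same weight; hence $\down(A)+\wt(\text{first edge})=\down(A\cup\{a_{1}\})$, which, combined with the recursive definition of $\bp$, gives the required equality of exponents. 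With that lemma supplied, your argument is the paper's.
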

\begin{proof}
We prove the assertion of the proposition by induction on $m-j$. If $m-j = 1$, then the assertion is clear since $\CS_{m, m-1} = \{ (m-1) \}$ and $\CA_{w}^{m, m-1} = \{ \{ m-1 \} \}$. Assume that $m-j > 1$. By Lemma~\ref{lem:QBG_exchange}, we can verify that if $\{ a_{1} < \cdots < a_{s} \} = \{ k \in [1, m-1] \mid w \xrightarrow{(k, m)} ws_{(k, m)} \}$ (note that $a_{s} = m-1$), then 
\begin{equation*}
\CA_{w}^{m, j} = \begin{cases} \{ \{j, a_{c_{1}}, \ldots, a_{c_{u}}\} \mid l < c_{1} < \cdots < c_{u} \le s \} & \text{if $j = a_{l}$ for some $l = 1, \ldots, s$, } \\ \emptyset & \text{if $j \not= a_{1}, \ldots, a_{s}$. } \end{cases}
\end{equation*}
Therefore, we compute
\begin{align}
& \sum_{(j_{1}, \ldots, j_{r}) \in \CS_{m, j}} \sum_{A_{1} \in \CA_{w}^{m, j_{1}}} \cdots \sum_{A_{r} \in \CA_{\ed(A_{r-1})}^{j_{r-1}, j_{r}}} (-1)^{|A_{1}|+\cdots+|A_{r}|-r} q^{-\pair{\lambda}{\down(A_{1}, \ldots, A_{r})}} \ed(A_{r}) \nonumber \\ 
\begin{split}
&= \sum_{k=1}^{s} \sum_{(j_{1}, \ldots, j_{r}) \in \CS_{a_{k}, j}} \sum_{A \in \CA_{w}^{m, a_{k}}} \sum_{A_{1} \in \CA_{\ed(A)}^{a_{k}, j_{1}}} \cdots \sum_{A_{r} \in \CA_{\ed(A_{r-1})}^{j_{r-1}, j_{r}}} \\ 
& \hspace*{20mm} \times (-1)^{|A|+|A_{1}|+\cdots+|A_{r}|-(r+1)} q^{-\pair{\lambda}{\down(A, A_{1}, \ldots, A_{r})}}\ed(A_{r})
\end{split} \nonumber \displaybreak[1] \\ 
\begin{split}
&= \sum_{k=1}^{s} \sum_{A \in \CA_{w}^{m, a_{k}}} (-1)^{|A|-1} q^{-\pair{\lambda}{\down(A)}} \\ 
& \hspace*{5mm} \times \underbrace{\sum_{(j_{1}, \ldots, j_{r}) \in \CS_{a_{k}, j}} \sum_{A_{1} \in \CA_{\ed(A)}^{a_{k}, j_{1}}} \cdots \sum_{A_{r} \in \CA_{\ed(A_{r-1})}^{j_{r-1}, j_{r}}} (-1)^{|A_{1}|+\cdots+|A_{r}|-r} q^{-\pair{\lambda}{\down(A_{1}, \ldots, A_{r})}}\ed(A_{r})}_{\text{induction hypothesis}}
\end{split} \nonumber \\ 
&= \sum_{k=1}^{s} \sum_{A \in \CA_{w}^{m, a_{k}}} (-1)^{|A|-1} q^{-\pair{\lambda}{\down(A)+\wt(\bp_{a_{k}, j}(\ed(A)))}} \ed(\bp_{a_{k}, j}(\ed(A))). \label{eq:IC_cancellation1}
\end{align}

If $s = 1$, then the assertion is clear since $\CA_{w}^{m, a_{1}} = \{ \{m-1\} \}$. Now, assume that $s \ge 2$. 
For $A \in \CA := (\CA_{w}^{m, a_{1}} \setminus \{ \{a_{1}\} \}) \sqcup (\bigsqcup_{k = 2}^{s} \CA_{w}^{m, a_{k}})$, we define $\iota(A)$ by 
\begin{align*}
A \in \bigsqcup_{k = 2}^{s} \CA_{w}^{m, a_{k}} \quad &\mapsto \quad \iota(A) := A \sqcup \{a_{1}\} \in \CA_{w}^{m, a_{1}} \setminus \{ \{a_{1}\} \}, \\ 
A \in \CA_{w}^{m, a_{1}} \setminus \{ \{a_{1}\} \} \quad &\mapsto \quad \iota(A) := A \setminus \{a_{1}\} \in \CA_{w}^{m, \min (A \setminus \{a_{1}\})} \subset \bigsqcup_{k=2}^{s} \CA_{w}^{m, a_{k}}. 
\end{align*}
We see that this $\iota$ defines an involution on the set $\CA$ such that 
$|\iota(A)| = |A| \pm 1$ for $A \in \CA$. For $A \in \CA_{w}^{m, a_{k}}$ with $k = 2, \ldots, s$, it follows from Corollary~\ref{cor:IC_cancellation_minimum} that the first edge in the directed path $\bp_{a_{k}, j}(\ed(A))$ in $\QBG(W)$ is $\ed(A) \xrightarrow{(a_{1}, a_{k})} \ed(A)s_{(a_{1}, a_{k})} = \ed(\iota(A))$. 
Hence we have $\ed(\bp_{a_{k}, j}(\ed(A))) = \ed(\bp_{a_{1}, j}(\ed(\iota(A))))$. 
Also, the directed path $w \rightarrow \cdots \rightarrow \ed(\iota(A))$ in $\QBG(W)$ corresponding to $\iota(A)$ is a shortest one of length $|\iota(A)| = |A| + 1$, since the order $\prec$ given by $(1, m) \prec \cdots \prec (m-1, m)$ forms a part of a reflection order on the set $\Delta^{+}$ of positive roots. 
Thus, the concatenation $w \rightarrow \cdots \rightarrow \ed(A) \xrightarrow{(a_{1}, a_{k})} \ed(A)s_{(a_{1}, a_{k})}$ of the directed path corresponding to $A$ with the edge $\ed(A) \xrightarrow{(a_{1}, a_{k})} \ed(A)s_{(a_{1}, a_{k})}$ is also a shortest one. 
Here we know (see \cite[Lemma~1\,(2)]{P}) that for any $v, u \in W$, all shortest directed paths from $v$ to $u$ in $\QBG(W)$ have the same weight $\wt(\cdot)$ (see \cite[Lemma~1\,(2)]{P}). It follows that 
\begin{equation*}
\down(A) + \wt(\bp_{a_{k}, j}(\ed(A))) = \down(\iota(A)) + \wt(\bp_{a_{1}, j}(\ed(\iota(A)))). 
\end{equation*}
Therefore, for $A \in \CA_{w}^{m, a_{k}}$ with $k = 2, \ldots, s$, we deduce that 
\begin{equation*}
\begin{split}
& (-1)^{|A|-1} q^{-\pair{\lambda}{\down(A) + \wt(\bp_{a_{k}, j}(\ed(A)))}} \ed(\bp_{a_{k}, j}(\ed(A))) \\ 
& + (-1)^{|\iota(A)|+1} q^{-\pair{\lambda}{\down(\iota(A)) + \wt(\bp_{a_{1}, j}(\ed(\iota(A))))}} \ed(\bp_{a_{1}, j}(\ed(\iota(A)))) = 0. 
\end{split}
\end{equation*}
This implies that 
\begin{align*}
\eqref{eq:IC_cancellation1} = \underbrace{q^{-\pair{\lambda}{\down(\{a_{1}\})+\wt(\bp_{a_{1}, j}(\ed(\{a_{1}\})))}} \ed(\bp_{a_{1}, j}(\ed(\{a_{1}\})))}_{\text{$k = 1$ and $A = \{a_{1}\} \in \CA_{w}^{m, a_{1}}$}} = q^{-\pair{\lambda}{\bp_{m, j}(w)}} \ed(\bp_{m, j}(w)). 
\end{align*}
This proves the proposition. 
\end{proof}

\end{document}